\newtheorem{thm}{Theorem}[section]
\newtheorem{lem}[thm]{Lemma}
\newtheorem{prop}[thm]{Proposition}
\newtheorem{qu}[thm]{Question}
\numberwithin{equation}{section}
\begin{document}
\title[Non-cyclic graph]{Non-cyclic graph associated with a group}
\author[A. Abdollahi \;\;  \&  \; A. Mohammadi
Hassanabadi]{{\bf A. Abdollahi$~^{1,2}$ \;\;\;  \&  \;\;\; A.
Mohammadi Hassanabadi$^{1,3}$}\\
~~\\
 $~^1$Department of Mathematics,\\ University of Isfahan,\\ Isfahan
81746-73441, Iran.\\
~~\\
$~^2$School of Mathematics, Institute for Research in Fundamental Sciences (IPM), P.O.Box: 19395-5746, Tehran, Iran.\\
~~\\
 $~^3$ Shaikhbahaee University}
 \thanks{e-mail Address' of Authors: {\tt
alireza\_abdollahi@yahoo.com},{\tt aamohaha@yahoo.com}}
\thanks{This research was in part supported by  Isfahan University Grant No. 851120  and its Center of Excellence for
Mathematics. The research of the first author was in part supported by a grant from IPM (No. 87200118)}
\subjclass{Primary 20D60, Secondary 05C25.}%
\keywords{Non-cyclic graph; diameter; domination number; solvable groups.}%
\begin{abstract}
We associate a graph $\mathcal{C}_G$  to a non locally cyclic
group $G$ (called the non-cyclic graph of $G$) as follows: take
$G\backslash Cyc(G)$ as vertex set, where $Cyc(G)=\{x\in G \;|\;
\langle x,y\rangle \; \text{is cyclic for all} \; y\in G\}$ is
called the cyclicizer of $G$, and join two vertices if they do not
generate a cyclic subgroup.
 For a simple graph $\Gamma$,
$w(\Gamma)$ denotes the clique number of $\Gamma$, which is the
maximum size (if it exists) of a complete subgraph of $\Gamma$.
In this paper we characterize groups whose non-cyclic graphs have
clique numbers at most $4$. We prove that a non-cyclic group $G$
is solvable whenever $w(\mathcal{C}_G)<31$ and the equality for a
non-solvable group $G$ holds if and only if $G/Cyc(G)\cong A_5$
or $S_5$.
\end{abstract}
\maketitle
\section{\bf Introduction and results}
 Let $G$ be a non locally cyclic group.
Following \cite{AM}, the non-cyclic graph $\mathcal{C}_G$ of $G$
is defined as follows: take  $G\backslash Cyc(G)$ as vertex set,
where $Cyc(G)=\{x\in G \;|\; \langle x,y\rangle \; \text{is cyclic
for all} \; y\in G\}$, and join two vertices if they do not
generate a cyclic subgroup. We call the complement of
$\mathcal{C}_G$, the cyclic graph of $G$, which has the same
vertex set as $\mathcal{C}_G$ and two distinct vertices are
adjacent whenever they generate a cyclic subgroup.
The cyclic graph of $G$ will be denoted by $\overline{\mathcal{C}_G}$.\\

We consider simple graphs which are
 undirected, with no loops or multiple edges. For any graph
$\Gamma$, we denote the sets of the vertices and the edges
  of $\Gamma$ by $V(\Gamma)$ and $E(\Gamma)$, respectively.
  The degree $d_\Gamma(v)$ of a vertex $v$ in $\Gamma$ is
the number of edges incident to $v$ and if the graph is
understood, then we denote $d_{\Gamma}(v)$ simply by $d(v)$. The
order of $\Gamma$ is defined $|V(\Gamma)|$. A graph $\Gamma$ is
{\it regular} if $d(v)=d(w)$ for any two vertices $v$ and $w$. A
subset $X$ of the vertices of $\Gamma$ is called a {\it clique}
if the induced subgraph on $X$ is a complete graph. The maximum
size of a clique in a graph $\Gamma$ is called the {\it clique
number} of $\Gamma$ and is denoted by $\omega(\Gamma)$.
 If there exists a path between two vertices $v$ and $w$  in $\Gamma$, then
$d_{\Gamma}(v,w)$ denotes the length of  the shortest path
between $v$ and $w$; otherwise $d_{\Gamma}(v,w)=\infty$. If the
graph is understood, then we denote $d_{\Gamma}(v,w)$ simply by
$d(v,w)$. The largest distance between all pairs of the vertices
of $\Gamma$ is called the {\it diameter} of $\Gamma$, and is
denoted by $\mathrm{diam}(\Gamma)$. A graph $\Gamma$ is {\it
connected} if there is a path between each pair of the vertices
of $\Gamma$. So  disconnected graphs have infinite diameter.
 For a graph $\Gamma$ and a subset $S$ of
the vertex set $V(\Gamma)$, denote by $N_\Gamma[S]$ the set of
vertices in $\Gamma$ which are in $S$ or adjacent to a vertex in
$S$. If $N_\Gamma[S] = V(\Gamma)$, then $S$ is said to be a
dominating set (of vertices in $\Gamma$).  The {\it domination
number} of a graph $\Gamma$, denoted by $\gamma(\Gamma)$, is the
minimum size of a dominating set of the vertices in $\Gamma$.
 A {\it planar} graph is a graph that can be embedded in the plane so
that no two edges intersect geometrically except at a vertex at
which both are incident. We denote the symmetric group on $n$
letters and the alternating  group of degree $n$ by $S_n$ and
$A_n$, respectively. Also $Q_8$ and $D_{2n}$ are used for the
quaternion group with 8 elements and dihedral group of order $2n$
($n>2$),
respectively.\\

The present work is a continuation of that of \cite{AM}.
 In section 2, we study the diameter and domination number of
the cyclic and non-cyclic graphs. In section 3, we characterize
all groups whose non-cyclic graphs have  clique numbers $\leq 4$.
In section 4, we classify all groups whose non-cyclic graphs are
planar or Hamiltonian. Finally in section 5, we give a sufficient
condition for solvability, by proving that a group $G$ is
solvable whenever $\omega(\mathcal{C}_G)<31$. We also prove the
bound $31$ cannot be improved and indeed  the equality for a
non-solvable group $G$ holds if and only if $G/Cyc(G)\cong A_5$
or $S_5$.
\section{\bf On the diameter and domination numbers of  the non-cyclic graph and its complement}
We first observe that to study the diameter of the non-cyclic
graph, we may  factor out the cyclicizer. Recall that if there
exists a path between two vertices $v$ and $w$  in a graph
$\Gamma$, then $d(v,w)$ denotes the length of  the shortest path
between $v$ and $w$; otherwise $d(v,w)=\infty$. The largest
distance between all pairs of the vertices of $\Gamma$ is called
the {\it diameter} of $\Gamma$, and is denoted by
$\mathrm{diam}(\Gamma)$. Thus if $\Gamma$ is disconnected then
$\mathrm{diam}(\Gamma)=\infty$.
\begin{lem}\label{d=3}
Let $G$ be a non locally cyclic group. Then
$\overline{\mathcal{C}_{G}}$ is connected if and only if
$\overline{\mathcal{C}_{\frac{G}{Cyc(G)}}}$ is connected,
$\mathrm{diam}(\mathcal{C}_{G})=\mathrm{diam}\big(\mathcal{C}_{\frac{G}{Cyc(G)}}\big)$
and
$\mathrm{diam}(\overline{\mathcal{C}_{G}})=\mathrm{diam}\big(\overline{\mathcal{C}_{\frac{G}{Cyc(G)}}}\big)$.
Moreover, corresponding connected components of
$\overline{\mathcal{C}_{G}}$ and
$\overline{\mathcal{C}_{\frac{G}{Cyc(G)}}}$ have the same diameter
when the component in $\overline{\mathcal{C}_{\frac{G}{Cyc(G)}}}$
is not an isolated vertex.
\end{lem}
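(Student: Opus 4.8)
The plan is to push everything through the natural epimorphism $\pi\colon G\to\overline{G}:=G/Cyc(G)$, showing that $\mathcal{C}_G$ (resp.\ $\overline{\mathcal{C}_G}$) is a ``blow-up'' of $\mathcal{C}_{\overline{G}}$ (resp.\ $\overline{\mathcal{C}_{\overline{G}}}$) in which every vertex is replaced by the corresponding coset of $Cyc(G)$. I will use two facts, both essentially in \cite{AM}: first, $Cyc(G)$ is a subgroup of $Z(G)$, hence normal; second, the bridging property that for all $x,y\in G$,
\[
\langle x,y\rangle\ \text{is cyclic}\iff\langle \pi(x),\pi(y)\rangle\ \text{is cyclic}.
\]
The forward direction is immediate; for the converse one chooses $z\in\langle x,y\rangle$ with $\pi(\langle x,y\rangle)=\langle\pi(z)\rangle$, writes $x=z^{i}c$, $y=z^{j}c'$ with $c,c'\in Cyc(G)$, observes that $\langle z,c\rangle=\langle w\rangle$ is cyclic by definition of $Cyc(G)$, that then $\langle w,c'\rangle$ is cyclic, and that $\langle x,y\rangle=\langle z,c,c'\rangle\le\langle w,c'\rangle$ is therefore cyclic. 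A first consequence is $Cyc(\overline{G})=1$, so $V(\mathcal{C}_{\overline{G}})=\overline{G}\setminus\{1\}$, $\pi$ restricts to a surjection $V(\mathcal{C}_G)\twoheadrightarrow V(\mathcal{C}_{\overline{G}})$ with the cosets of $Cyc(G)$ as fibres, and $\overline{G}$ is again non locally cyclic (so $|\overline{G}|\ge 4$ and each of its two graphs has at least three vertices).

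Now the bridging property gives, for vertices $x,y$ with $\pi(x)\neq\pi(y)$, that $x\sim y$ in $\mathcal{C}_G$ exactly when $\pi(x)\sim\pi(y)$ in $\mathcal{C}_{\overline{G}}$ (and dually for the cyclic graphs); while for $x\neq y$ with $\pi(x)=\pi(y)$ we have $xy^{-1}\in Cyc(G)$, so $\langle x,y\rangle=\langle x,xy^{-1}\rangle$ is cyclic, meaning $x,y$ are non-adjacent in $\mathcal{C}_G$ and adjacent in $\overline{\mathcal{C}_G}$. Hence $\mathcal{C}_G$ is $\mathcal{C}_{\overline{G}}$ with every vertex blown up to an independent set and $\overline{\mathcal{C}_G}$ is $\overline{\mathcal{C}_{\overline{G}}}$ with every vertex blown up to a clique, each of size $|Cyc(G)|$. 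Connectivity then transfers in both directions: a path upstairs projects, after deleting any repeated consecutive vertex, to a walk downstairs, and a shortest path downstairs lifts to a path of the same length upstairs by picking coset representatives with the prescribed endpoints and applying the bridging property to consecutive pairs; since every fibre is non-empty and connected, $\overline{\mathcal{C}_G}$ is connected iff $\overline{\mathcal{C}_{\overline{G}}}$ is. The same projection–lifting argument shows that for $x,y$ in distinct cosets $d_{\mathcal{C}_G}(x,y)=d_{\mathcal{C}_{\overline{G}}}(\pi(x),\pi(y))$ and $d_{\overline{\mathcal{C}_G}}(x,y)=d_{\overline{\mathcal{C}_{\overline{G}}}}(\pi(x),\pi(y))$; in particular, if $A$ is a connected component of $\overline{\mathcal{C}_{\overline{G}}}$ with at least two vertices, then $\pi^{-1}(A)$ is a connected component of $\overline{\mathcal{C}_G}$ and, as the only extra pairs it contains are same-coset pairs (at distance $1$), $\mathrm{diam}(\pi^{-1}(A))=\mathrm{diam}(A)$. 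This is the ``moreover'' clause, and letting $A$ run over all components gives $\mathrm{diam}(\overline{\mathcal{C}_G})=\mathrm{diam}(\overline{\mathcal{C}_{\overline{G}}})$, both finite or both $\infty$.

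Finally, for $\mathrm{diam}(\mathcal{C}_G)$ versus $\mathrm{diam}(\mathcal{C}_{\overline{G}})$: the distinct-coset pairs already realise the downstairs diameter, so the point is the same-coset pairs $x\neq y$. These are non-adjacent, but any neighbour $w$ of $x$ in $\mathcal{C}_G$ — one exists because $x\notin Cyc(G)$ — satisfies $\pi(w)\neq\pi(x)=\pi(y)$ and so is also a neighbour of $y$; hence such a pair lies at distance exactly $2$. Therefore $\mathrm{diam}(\mathcal{C}_G)=\mathrm{diam}(\mathcal{C}_{\overline{G}})$ whenever $\mathrm{diam}(\mathcal{C}_{\overline{G}})\ge 2$ (in particular whenever $Cyc(G)=1$, in which case the two graphs are even isomorphic). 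The delicate point — and the one I expect to be the real obstacle — is the residual case in which $\mathcal{C}_{\overline{G}}$ is complete, which by the bridging property forces $\overline{G}$ to be an elementary abelian $2$-group; here one must look directly at how the enforced ``within-coset distance $2$'' sits against $\mathrm{diam}(\mathcal{C}_{\overline{G}})$, since the generic argument no longer applies. Everything else is the routine bookkeeping of the blow-up correspondence.
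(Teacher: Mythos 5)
Your bridging equivalence ($\langle x,y\rangle$ is cyclic iff $\langle\bar{x},\bar{y}\rangle$ is cyclic) is exactly the content of the paper's own proof, and your argument for it is a correct variant of theirs. Where you go beyond the paper is in actually deducing the connectivity and diameter claims from this equivalence, and your extra care pays off: the paper declares the edge correspondence to be ``enough,'' which it is not, because it says nothing about pairs of distinct vertices lying in the same coset of $Cyc(G)$. For the cyclic graph such pairs are adjacent, so blowing each vertex up into a clique changes nothing except over isolated vertices downstairs --- which is precisely the caveat in the ``moreover'' clause --- and your treatment of connectivity, of the componentwise diameters, and of $\mathrm{diam}(\overline{\mathcal{C}_G})$ is complete and correct.

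The ``delicate point'' you flag for $\mathcal{C}_G$ is not a defect of your argument: it is a genuine failure of the lemma as stated. As you observe, same-coset pairs are non-adjacent in $\mathcal{C}_G$ yet lie at distance exactly $2$, so $\mathrm{diam}(\mathcal{C}_G)=\max\{\mathrm{diam}(\mathcal{C}_{G/Cyc(G)}),\,2\}$ whenever $Cyc(G)\neq 1$, and the claimed equality fails exactly when $Cyc(G)\neq 1$ and $\mathcal{C}_{G/Cyc(G)}$ is complete, i.e.\ when $G/Cyc(G)$ is an elementary abelian $2$-group of rank at least $2$. A concrete counterexample is $G=\mathbb{Z}_2\oplus\mathbb{Z}_2\oplus\mathbb{Z}_3$: here $Cyc(G)$ is the $\mathbb{Z}_3$ factor, $\mathcal{C}_{G/Cyc(G)}$ is $K_3$ with diameter $1$, while $\mathcal{C}_G$ is the complete tripartite graph $K_{3,3,3}$ with diameter $2$. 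So the residual case you isolate cannot be closed; the diameter equality for $\mathcal{C}_G$ needs the additional hypothesis that $Cyc(G)=1$ or that $G/Cyc(G)$ is not elementary abelian of exponent $2$, and your proof establishes everything else in the statement. The paper's proof silently skips exactly this point.
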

\begin{proof}
It is enough to prove that $x-y$ is an edge in $\mathcal{C}_G$ if
and only if $\bar{x}-\bar{y}$ is an edge in
$\mathcal{C}_{\frac{G}{Cyc(G)}}$, where $\bar{}$ is the natural
epimorphism from $G$ to $\frac{G}{Cyc(G)}$. If $x-y$ is an edge in
$\mathcal{C}_G$, then $\langle x,y\rangle$ is not cyclic. We have
to prove that $\langle \bar{x},\bar{y}\rangle$ is not cyclic.
Suppose, for a contradiction, that $\langle
\bar{x},\bar{y}\rangle$ is cyclic. Then $x=g^ic_1$ and $y=g^jc_2$
for some $g\in G$, $c_1,c_2\in Cyc(G)$ and integers $i,j$. Thus
$$\langle x,y\rangle=\langle g^ic_1,g^jc_2\rangle\leq \langle g,c_1,c_2\rangle.$$
Now since $\langle c_1,c_2 \rangle=\langle c \rangle$, for some
$c\in Cyc(G)$,  it follows that $\langle g,c_1,c_2\rangle$ is
cyclic.
Therefore $\langle x,y \rangle$ is cyclic, a contradiction.\\
Now if $\bar{x}-\bar{y}$ is an edge in
$\mathcal{C}_{\frac{G}{Cyc(G)}}$, then $\langle
\bar{x},\bar{y}\rangle$ is not cyclic; and since  $\langle
\bar{x},\bar{y}\rangle$ is a homomorphic image of $\langle x,y
\rangle$, $\langle x,y\rangle$ is not cyclic. This completes the
proof.
\end{proof}
\begin{lem}\label{d(x,y)=3}
Let $G$ be a finite non-cyclic group and let $x,y\in G\backslash
Cyc(G)$. Then $d_{\mathcal{C}_G}(x,y)=3$ if and only if
$G=Cyc_G(x) \cup Cyc_G(y)$. Moreover, $\langle x,y\rangle$ is
cyclic and  for all $t\in Cyc_G(x)\backslash Cyc_G(y)$ and for
all $s\in Cyc_G(y) \backslash Cyc_(x)$, $\langle t,s\rangle$ is
not cyclic.
\end{lem}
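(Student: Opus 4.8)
The plan is to recast everything in terms of the local cyclicizers $Cyc_G(g)=\{h\in G:\langle g,h\rangle \text{ is cyclic}\}$, using two trivial remarks: $Cyc(G)\subseteq Cyc_G(g)$ for every $g$ (so any $h\notin Cyc_G(g)$ is automatically a vertex of $\mathcal{C}_G$ different from $g$), and two distinct vertices $u,v$ are non-adjacent in $\mathcal{C}_G$ if and only if $v\in Cyc_G(u)$. Consequently a vertex $z$ is a common neighbour of $x$ and $y$ in $\mathcal{C}_G$ if and only if $z\in G\setminus\big(Cyc_G(x)\cup Cyc_G(y)\big)$; hence $x$ and $y$ have no common neighbour precisely when $G=Cyc_G(x)\cup Cyc_G(y)$.

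The one genuinely group-theoretic input I would isolate first is: if $\langle x,y\rangle$ is not cyclic then $G\neq Cyc_G(x)\cup Cyc_G(y)$. This is immediate by taking $z=xy$, since $\langle z,x\rangle=\langle z,y\rangle=\langle x,y\rangle$ is non-cyclic, so $z$ witnesses $G\setminus(Cyc_G(x)\cup Cyc_G(y))\neq\emptyset$. Contrapositively, $G=Cyc_G(x)\cup Cyc_G(y)$ forces $\langle x,y\rangle$ to be cyclic, which already yields the ``$\langle x,y\rangle$ is cyclic'' clause and rules out $d_{\mathcal{C}_G}(x,y)=1$.

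With this, the equivalence is short. If $d_{\mathcal{C}_G}(x,y)=3$ then $x,y$ are non-adjacent with no common neighbour, so $\langle x,y\rangle$ is cyclic and $G=Cyc_G(x)\cup Cyc_G(y)$ by the remarks above. Conversely, assume $G=Cyc_G(x)\cup Cyc_G(y)$; then $x\neq y$ (otherwise $x\in Cyc(G)$), $\langle x,y\rangle$ is cyclic so $d\neq1$, and there is no common neighbour so $d\neq2$; thus $d\geq3$. To pin down $d=3$ I must produce a path of length $3$. Since $y\notin Cyc(G)$ we have $Cyc_G(y)\neq G$, hence $Cyc_G(x)\setminus Cyc_G(y)\neq\emptyset$, and symmetrically $Cyc_G(y)\setminus Cyc_G(x)\neq\emptyset$; choose $t\in Cyc_G(x)\setminus Cyc_G(y)$ and $s\in Cyc_G(y)\setminus Cyc_G(x)$. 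Then $x-s$ and $t-y$ are edges, and $x,s,t,y$ are pairwise distinct (any coincidence would, via the remarks together with cyclicity of $\langle x,y\rangle$, be contradictory). It remains only to check that $s-t$ is an edge, i.e.\ that $\langle s,t\rangle$ is not cyclic --- and this is precisely the content of the final ``moreover'' assertion, so establishing it here does double duty.

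I expect this last point to be the crux, though it too is brief: if $\langle s,t\rangle=\langle w\rangle$ were cyclic, then $\langle x,w\rangle\supseteq\langle x,s\rangle$ and $\langle y,w\rangle\supseteq\langle y,t\rangle$ are both non-cyclic, so $w\in G\setminus(Cyc_G(x)\cup Cyc_G(y))$, contradicting $G=Cyc_G(x)\cup Cyc_G(y)$. Hence $\langle s,t\rangle$ is non-cyclic, $x-s-t-y$ is a path of length $3$, and $d_{\mathcal{C}_G}(x,y)=3$. Running the same argument with an arbitrary pair $t\in Cyc_G(x)\setminus Cyc_G(y)$, $s\in Cyc_G(y)\setminus Cyc_G(x)$ gives the last statement of the lemma; everything else is bookkeeping with the definitions and with the inclusion $Cyc(G)\subseteq Cyc_G(g)$.
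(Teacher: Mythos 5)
Your proof is correct and complete. The paper itself gives no argument here---it simply cites Proposition 3.2 of \cite{AM}---so there is nothing to compare step by step; your write-up supplies a clean, self-contained replacement. The two pivotal observations are exactly the right ones: (i) if $\langle x,y\rangle$ is non-cyclic then $z=xy$ satisfies $\langle z,x\rangle=\langle z,y\rangle=\langle x,y\rangle$, so $z$ lies outside $Cyc_G(x)\cup Cyc_G(y)$ and is a common neighbour, which simultaneously kills $d=1$ and $d=2$ in the converse direction and forces $\langle x,y\rangle$ cyclic when $G=Cyc_G(x)\cup Cyc_G(y)$; and (ii) if $\langle s,t\rangle=\langle w\rangle$ with $t\in Cyc_G(x)\setminus Cyc_G(y)$ and $s\in Cyc_G(y)\setminus Cyc_G(x)$, then $\langle x,w\rangle\geq\langle x,s\rangle$ and $\langle y,w\rangle\geq\langle y,t\rangle$ are both non-cyclic, so $w$ escapes the union---contradiction. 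Your bookkeeping (that $Cyc(G)\subseteq Cyc_G(g)$ guarantees all the auxiliary elements are genuine vertices, that the nonemptiness of $Cyc_G(x)\setminus Cyc_G(y)$ follows from $y\notin Cyc(G)$, and that $x,s,t,y$ are pairwise distinct) is all sound. One minor remark: your argument never uses finiteness of $G$, so it actually proves the lemma in slightly greater generality than stated.
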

\begin{proof}
The proof is contained in that of Proposition 3.2 of \cite{AM}.
\end{proof}
Recall that if  $G$ is a non locally cyclic group, then  two
distinct vertices are adjacent in the cyclic graph
$\overline{\mathcal{C}_{G}}$   if and only if they generate a
cyclic group.
\begin{lem}\label{com1}
Let $G$ be a non locally cyclic group. Then
$\mathrm{diam}(\overline{\mathcal{C}_G})\not=1$. In other words,
$\overline{\mathcal{C}_G}$ cannot  be isomorphic to a complete
graph.
\end{lem}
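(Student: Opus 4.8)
The plan is to argue by contradiction. Suppose $\overline{\mathcal{C}_G}$ is a complete graph, so that any two \emph{distinct} vertices $x,y\in G\backslash Cyc(G)$ generate a cyclic subgroup of $G$. Since $G$ is not locally cyclic we have $Cyc(G)\neq G$ — indeed $Cyc(G)=G$ would mean that $\langle x,y\rangle$ is cyclic for every pair $x,y\in G$, i.e. $G$ is locally cyclic — so the vertex set $G\backslash Cyc(G)$ is nonempty and we may fix $x\in G\backslash Cyc(G)$.

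The key step is to produce a second vertex that is non-adjacent to $x$. By the definition of $Cyc(G)$, the condition $x\notin Cyc(G)$ means there is some $y\in G$ for which $\langle x,y\rangle$ is not cyclic; in particular $y\neq x$, since $\langle x,x\rangle=\langle x\rangle$ is cyclic. I claim $y\notin Cyc(G)$: otherwise the defining property of $Cyc(G)$ would force $\langle y,z\rangle$ to be cyclic for every $z\in G$, and taking $z=x$ together with $\langle x,y\rangle=\langle y,x\rangle$ would make $\langle x,y\rangle$ cyclic, contrary to the choice of $y$. Hence $y$ is a vertex of $\overline{\mathcal{C}_G}$, the vertices $x$ and $y$ are distinct, and $\langle x,y\rangle$ is not cyclic, so $x$ and $y$ are not adjacent in $\overline{\mathcal{C}_G}$. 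This contradicts completeness.

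Finally, to obtain the stated conclusion $\mathrm{diam}(\overline{\mathcal{C}_G})\neq 1$, note that the construction above also exhibits two distinct vertices, so $\overline{\mathcal{C}_G}$ has order at least $2$ and is not complete; by the conventions recalled just before the lemma, such a graph has diameter at least $2$ if it is connected and diameter $\infty$ otherwise, so in every case $\mathrm{diam}(\overline{\mathcal{C}_G})\neq 1$. There is no substantial obstacle in this argument; the only point requiring any care is the verification that the ``witness'' $y$ lies outside $Cyc(G)$, which follows at once from the symmetry $\langle x,y\rangle=\langle y,x\rangle$ and the definition of the cyclicizer.
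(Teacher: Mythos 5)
Your proof is correct and is essentially the paper's argument read in the contrapositive direction: the paper notes that $\mathrm{diam}(\overline{\mathcal{C}_G})=1$ would force every pair of elements of $G$ to generate a cyclic subgroup, making $G$ locally cyclic, while you directly exhibit two distinct non-adjacent vertices $x,y\notin Cyc(G)$ from the failure of local cyclicity. The extra care you take in checking that the witness $y$ lies outside $Cyc(G)$ is a detail the paper leaves implicit, but the underlying idea is the same.
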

\begin{proof}
If $\mathrm{diam}(\overline{\mathcal{C}_G})=1$, then every two
elements of $G$ generates a cyclic group. This is equivalent to
$G$ being  locally cyclic, a contradiction.
\end{proof}
\begin{prop}
\begin{enumerate}
\item If $G$ is a non locally cyclic  group such that either
$Cyc(G)\not=1$ or $\overline{\mathcal{C}_G}$ is connected and
$Cyc(G)=1$, then $G$ is either a torsion group or  a torsion free
group.
\item If $A$ is an abelian torsion-free non locally cyclic group, then $\overline{\mathcal{C}_A}$ is disconnected.
\item
There are torsion-free simple groups $H$ such that
$\mathrm{diam}(\overline{\mathcal{C}_H})=\mathrm{diam}(\mathcal{C}_H)=2$.
\end{enumerate}
\end{prop}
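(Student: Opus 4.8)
The three parts are essentially independent, and I would handle them in turn.

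\textbf{Part (1).} Everything hinges on one remark: if $u,v\neq1$ and $\langle u,v\rangle$ is cyclic, then $\langle u,v\rangle$ is isomorphic to $\mathbb Z$ or to $\mathbb Z/n$, so $u$ has finite order if and only if $v$ does --- that is, on nontrivial elements, ``having finite order'' is constant on every cyclic subgroup. Suppose first $Cyc(G)\neq1$. For $c_1,c_2\in Cyc(G)$ the group $\langle c_1,c_2\rangle$ is cyclic, so $Cyc(G)$ is locally cyclic and hence is itself torsion or torsion-free; choosing $1\neq c\in Cyc(G)$, every nontrivial $g\in G$ lies together with $c$ in the cyclic group $\langle c,g\rangle$ and so has the same order type as $c$, whence $G$ is torsion (if $|c|<\infty$) or torsion-free (if $|c|=\infty$). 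Suppose instead $Cyc(G)=1$ and $\overline{\mathcal C_G}$ is connected; then every nontrivial element is a vertex, and along any path of $\overline{\mathcal C_G}$ consecutive vertices generate a cyclic group, so the order type is transported along paths; by connectedness it is constant on $G\setminus\{1\}$, i.e.\ $G$ is torsion or torsion-free.

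\textbf{Part (2).} For $A$ abelian and torsion-free I would prove that ``$\langle x,y\rangle$ is cyclic'' is an equivalence relation on $A\setminus\{1\}$ (note that any such $x,y$ lie outside $Cyc(A)$). Reflexivity and symmetry are clear; the crux is transitivity: if $\langle x,y\rangle=\langle u\rangle$ and $\langle y,z\rangle=\langle v\rangle$ with $x,y,z\neq1$, then $u$ and $v$ have a common nonzero power (equal to $y$ up to sign), so they are linearly dependent over $\mathbb Q$; hence $\langle u,v\rangle$ has torsion-free rank $1$, and being finitely generated it is $\cong\mathbb Z$, so $x,z\in\langle u,v\rangle$ and $\langle x,z\rangle$ is cyclic. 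Consequently $\overline{\mathcal C_A}$ has no edges between distinct $\approx$-classes. Since $A$ is not locally cyclic there exist $x,y\in A$ with $\langle x,y\rangle$ non-cyclic; these are nontrivial and lie in distinct classes, so they fall in distinct connected components and $\overline{\mathcal C_A}$ is disconnected.

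\textbf{Part (3).} For any non-abelian simple group $H$ we have $Cyc(H)=1$ ($Cyc(H)$ is normal, and $Cyc(H)=H$ would make $H$ abelian), so $V(\mathcal C_H)=V(\overline{\mathcal C_H})=H\setminus\{1\}$; also $\mathcal C_H$ is not complete ($x$ and $x^2$ are non-adjacent for $1\neq x$) and $\overline{\mathcal C_H}$ is not complete by Lemma~\ref{com1}. Thus once each graph is shown connected its diameter is automatically at least $2$, and it remains to produce a torsion-free simple $H$ with both diameters at most $2$. For this I would invoke the monster-group constructions of Ol'shanskii and Obraztsov, which yield torsion-free simple groups with very tightly prescribed subgroup lattices, and tune the parameters so that the cyclic subgroups of $H$ interlock: one wants every nontrivial element to lie in a cyclic subgroup and, for every pair of nontrivial elements $x,y$, cyclic subgroups through $x$ and through $y$ with a common nontrivial element $z$ --- then $z$ is a common neighbour of $x$ and $y$ in $\overline{\mathcal C_H}$, giving $\mathrm{diam}(\overline{\mathcal C_H})\leq2$ --- together with the dual arrangement, a common ``non-cyclic partner'' for every non-cyclic pair, giving $\mathrm{diam}(\mathcal C_H)\leq2$. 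The main obstacle is exactly this construction. In all the ``obvious'' torsion-free groups (free groups, torsion-free hyperbolic groups, or the Ol'shanskii groups in which every proper subgroup is infinite cyclic) each nontrivial element lies in a \emph{unique} maximal cyclic subgroup --- its own centralizer --- so distinct maximal cyclic subgroups intersect trivially and $\overline{\mathcal C_H}$ is merely a disjoint union of cliques, hence disconnected; one must therefore engineer a torsion-free simple group in which this fails, and verifying that it can be arranged so that both graphs have diameter exactly $2$ is the crux of the proof.
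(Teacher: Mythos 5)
Your parts (1) and (2) are correct. Part (1) matches the paper's argument in the connected case (the paper disposes of the case $Cyc(G)\not=1$ by citing Lemma 2.3 of \cite{AM}, while you give the short direct argument with a fixed $1\not=c\in Cyc(G)$, which is fine). In part (2) you take a slightly different and more self-contained route: the paper argues by contradiction that connectivity would force $\langle a\rangle\cap\langle b\rangle\not=1$ for all non-trivial $a,b\in A$ and then embeds $A$ into the additive group $\mathbb{Q}$ to contradict the assumption that $A$ is not locally cyclic, whereas you observe directly that in a torsion-free abelian group ``generating a cyclic subgroup'' is transitive on non-trivial elements, so a non-adjacent pair of vertices already lies in different components. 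Both arguments hinge on the same transitivity fact; yours avoids the embedding into $\mathbb{Q}$ and is, if anything, more elementary.

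Part (3), however, has a genuine gap: it is an existence statement, and you do not establish existence. You correctly reduce the problem to producing a torsion-free simple group $H$ in which any two non-trivial cyclic subgroups meet non-trivially (a common non-trivial element $z$ of $\langle a\rangle$ and $\langle b\rangle$ is a common neighbour of $a$ and $b$ in $\overline{\mathcal{C}_H}$, and the diameter cannot be $1$ by Lemma \ref{com1}), and you correctly note that the standard torsion-free examples fail because their maximal cyclic subgroups intersect trivially. But you then declare the needed construction to be ``the crux'' and stop, so nothing has been proved. The paper closes exactly this gap with a single citation: Obraztsov \cite{Ob} constructed a torsion-free simple group in which the intersection of any two non-trivial subgroups is non-trivial. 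This immediately gives $Cyc(H)=1$ and $\langle a\rangle\cap\langle b\rangle\not=1$ for all non-trivial $a,b$, hence $\mathrm{diam}(\overline{\mathcal{C}_H})=2$; the companion equality $\mathrm{diam}(\mathcal{C}_H)=2$ is then quoted from Proposition 3.7 of \cite{AM} rather than argued via your ``common non-cyclic partner'' condition. Your plan is the right one, but without the Obraztsov reference (or an actual construction) part (3) remains unproved.
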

\begin{proof}
(1) \;  If $Cyc(G)\not=1$, then the proof follows  from Lemma 2.3
of \cite{AM}. Thus assume that $\overline{\mathcal{C}_G}$ is
connected and $Cyc(G)=1$. If there were elements of infinite
order and non-trivial elements of finite order, then connectivity
would guarantee some pair of these would be adjacent in
$\overline{\mathcal{C}_G}$, which is not possible.
 This proves (1).\\
(2) \; Suppose, for a contradiction, that
$\overline{\mathcal{C}_A}$ is connected. Note that any two
adjacent vertices $a, b$ satisfy $\langle a\rangle \cap \langle b
\rangle\not=1$. Since  $A$ is torsion-free and
$\overline{\mathcal{C}_A}$ is connected, it follows that $\langle
a\rangle \cap \langle b \rangle\not=1$ for any two non-trivial
elements $a,b$ of $A$. Fix a non-trivial element $a\in A$, then
it is easy to see that the map  $f$ defined from $A$ to the
additive group $\mathbb{Q}$ of rational numbers  by $f(x)=m/n$,
where $x^n=a^m$, is a group monomorphism, where $m$ and $n$ are
integers such that $1\not=x^m=a^n\in \langle x\rangle \cap
\langle a \rangle$. Therefore $A$ is isomorphic to a subgroup of
the additive group of rational numbers and so it is locally
cyclic, which is impossible. \\
 (3) \; In \cite{Ob} a torsion-free simple group $H$ is
constructed such that the intersection of any two of its
non-trivial subgroups is non-trivial. Therefore $Cyc(H)=1$ and
for any two non-trivial elements $a$ and $b$ of $H$, $\langle
a\rangle \cap \langle b\rangle \neq 1$. This  together with Lemma
\ref{com1} implies that
$\mathrm{diam}(\overline{\mathcal{C}_H})=2$. On the other hand,
by Proposition 3.7 of \cite{AM},
$\mathrm{diam}(\mathcal{C}_H)=2$. This completes the proof.
\end{proof}
\begin{lem}
Let $G$ be a finite non-cyclic group of prime power order. Then
$\overline{\mathcal{C}_G}$ is disconnected.
\end{lem}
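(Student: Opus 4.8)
The plan is to describe the connected components of $\overline{\mathcal{C}_G}$ by focusing on the vertices of order $p$, where $p$ is the prime with $|G|$ a prime power. Everything rests on two preliminary remarks. First, in a finite $p$-group $\langle a,b\rangle$ is cyclic if and only if one of $\langle a\rangle,\langle b\rangle$ contains the other, since the subgroups of a cyclic $p$-group are linearly ordered (the converse being trivial). Second, $C:=Cyc(G)$, which is a subgroup of $G$, is cyclic, central, and proper: cyclic because it is finite and locally cyclic (any two of its elements generate a cyclic group by definition of $Cyc(G)$); central because $x\in Cyc(G)$ makes $\langle x,g\rangle$ abelian for all $g$; and proper because $Cyc(G)=G$ would force $G$ to be locally cyclic, hence cyclic, contrary to hypothesis. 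Consequently a subgroup of order $p$ either lies inside $C$, in which case none of its generators is a vertex, or meets $C$ trivially, in which case all of its nonidentity elements are vertices; and $C$, being cyclic, contains at most one subgroup of order $p$.

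The heart of the argument is the claim that if $a\in G\setminus C$ has order $p$, then the connected component of $a$ in $\overline{\mathcal{C}_G}$ equals $D_a:=\{v\in G\setminus C:\langle a\rangle\le\langle v\rangle\}$. Each $v\in D_a$ is joined to $a$ because $\langle a,v\rangle=\langle v\rangle$ is cyclic. For the reverse inclusion I would induct along a path issuing from $a$: if $v$ satisfies $\langle a\rangle\le\langle v\rangle$ and $w$ is a neighbour of $v$, then by the first remark either $\langle v\rangle\le\langle w\rangle$, so $\langle a\rangle\le\langle w\rangle$, or $1\ne\langle w\rangle\le\langle v\rangle$, and then $\langle a\rangle$, being the unique subgroup of order $p$ of the cyclic $p$-group $\langle v\rangle$, is again contained in $\langle w\rangle$. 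It is exactly this propagation that forces the choice $|a|=p$. Since distinct subgroups of order $p$ are incomparable, the claim yields: any two vertices of order $p$ generating distinct subgroups lie in different components of $\overline{\mathcal{C}_G}$. So it suffices to produce two such vertices.

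This is easy unless $G$ is generalized quaternion, which is the one real obstacle, these being precisely the non-cyclic $p$-groups with a unique subgroup of order $p$. If $G$ is neither cyclic nor generalized quaternion, it has at least two subgroups of order $p$; as $C$ contains at most one of them, some subgroup $P$ of order $p$ meets $C$ trivially, and a second subgroup $Q$ of order $p$ either also meets $C$ trivially (and then $P,Q$ already furnish the desired vertices) or lies in $C\le Z(G)$, whence $P$ and $Q$ commute and $\langle P,Q\rangle\cong C_p\times C_p$ has $p+1\ge 3$ subgroups of order $p$, at least two of which then meet $C$ trivially. In either event $\overline{\mathcal{C}_G}$ is disconnected. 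Finally, for $G\cong Q_{2^n}$ with $n\ge 3$ I would argue directly: here $C=Z(G)=\langle z\rangle$ has order $2$, every element $s$ outside the index-$2$ cyclic subgroup $R$ has order $4$ and generates a maximal cyclic subgroup, so the only neighbour of the vertex $s$ is $sz$ (and vice versa) and the component of $s$ is the single edge $\{s,sz\}$; since $|G\setminus R|=2^{n-1}\ge 4$, there is a vertex outside this component, and again $\overline{\mathcal{C}_G}$ is disconnected.
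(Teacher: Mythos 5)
Your proof is correct. The engine is the same one the paper uses: in a finite $p$-group two elements generate a cyclic subgroup exactly when their cyclic subgroups are comparable, so the subgroup of order $p$ sitting inside a vertex's cyclic subgroup is constant along any path of $\overline{\mathcal{C}_G}$, and vertices lying over distinct subgroups of order $p$ are therefore separated. Where you genuinely diverge is in the set-up and the endgame. The paper first factors out the cyclicizer via Lemma \ref{d=3} and assumes $Cyc(G)=1$; then, if connectivity forced all subgroups of order $p$ to coincide in a single $\langle x\rangle$, every non-trivial cyclic subgroup would contain $\langle x\rangle$, so $x\in Cyc(G)=1$ --- an immediate contradiction, with no case analysis and no structure theory of $p$-groups. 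You instead work in $G$ itself, which buys you more (an explicit identification of the component of an order-$p$ vertex $a$ as the set $D_a$ of vertices whose cyclic subgroup contains $\langle a\rangle$) but costs you more: you must track which order-$p$ subgroups avoid $Cyc(G)$, you must invoke the classification of finite $p$-groups with a unique subgroup of order $p$ as cyclic or generalized quaternion, and you must dispose of $Q_{2^n}$ by a separate hand computation. The paper's reduction modulo $Cyc(G)$ is precisely what lets it bypass your generalized quaternion case, since a finite non-cyclic $p$-group with a unique subgroup of order $p$ always has that subgroup inside its cyclicizer and so cannot have trivial cyclicizer. Both arguments are complete; yours is more self-contained about the graph but heavier on group theory, the paper's is shorter at the price of quoting its earlier quotient lemma.
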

\begin{proof}
Suppose, for a contradiction, that $\overline{\mathcal{C}_G}$ is
connected. By Lemma \ref{d=3}, we may assume that $Cyc(G)=1$.  Now
we prove that $G$ has only one subgroup of prime order. Suppose
that there are two elements $a$ and $b$  of prime order. Since
$\overline{\mathcal{C}_G}$ is connected, there exists a sequence
$x_1,\dots,x_n$ of elements of $G\backslash Cyc(G)$ such that
$$\langle a,x_1\rangle,\langle
x_1,x_2\rangle,\dots,\langle x_{n-1},x_n\rangle,\langle
x_n,b\rangle\eqno{(1)}$$ are all cyclic. Since a cyclic group of
$p$-power order has only one subgroup of order $p$ and  both
$\langle a\rangle$ and $\langle b\rangle$ are subgroups of order
$p$ of the cyclic groups $(1)$, we have that $\langle
a\rangle=\langle b \rangle$. Now let $A$ be the only subgroup of
prime order in $G$, which can be generated by $x$ and let $y$ be
any non-trivial element of $G$. Then  $\langle x,y\rangle=\langle
y\rangle$. This shows that $x\in Cyc(G)$, which is impossible.
This completes the proof.
\end{proof}
\begin{lem}\label{d-comp=3}
Let $G$ be a  non locally cyclic group. If
$\mathrm{diam}(\mathcal{C}_{G})=3$ then
$\overline{\mathcal{C}_G}$ is connected and
$\mathrm{diam}\big(\overline{\mathcal{C}_{G}}\big)\in\{2,3\}$.
\end{lem}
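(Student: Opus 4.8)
The plan is to argue almost purely graph-theoretically, using only that $\overline{\mathcal{C}_G}$ is the complement of $\mathcal{C}_G$ on the same vertex set, together with Lemma \ref{com1}; in particular no reduction modulo $Cyc(G)$ is needed. Since $\mathrm{diam}(\mathcal{C}_G)=3$, the graph $\mathcal{C}_G$ is connected and there exist vertices $x,y\in G\setminus Cyc(G)$ with $d_{\mathcal{C}_G}(x,y)=3$. In particular $x$ and $y$ are not adjacent in $\mathcal{C}_G$, i.e. $\langle x,y\rangle$ is cyclic, so $x$ and $y$ \emph{are} adjacent in $\overline{\mathcal{C}_G}$.

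Next I would show that $\{x,y\}$ dominates $\overline{\mathcal{C}_G}$. Let $z$ be any vertex different from $x$ and $y$. If $z$ were adjacent in $\mathcal{C}_G$ to both $x$ and $y$, then $x-z-y$ would be a path of length $2$ in $\mathcal{C}_G$, contradicting $d_{\mathcal{C}_G}(x,y)=3$. Hence $z$ is non-adjacent in $\mathcal{C}_G$ to at least one of $x,y$, which means $z$ is adjacent in $\overline{\mathcal{C}_G}$ to at least one of $x,y$.

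The connectivity and the diameter bound then follow immediately. Given any two vertices $a,b$ of $\overline{\mathcal{C}_G}$, each of them is adjacent in $\overline{\mathcal{C}_G}$ to some vertex of $\{x,y\}$, and since $x$ and $y$ are themselves adjacent in $\overline{\mathcal{C}_G}$, one obtains a walk from $a$ to $b$ of length at most $3$ passing through $\{x,y\}$. Therefore $\overline{\mathcal{C}_G}$ is connected and $\mathrm{diam}(\overline{\mathcal{C}_G})\le 3$. Finally, Lemma \ref{com1} rules out $\mathrm{diam}(\overline{\mathcal{C}_G})=1$, and since $\mathcal{C}_G$ contains a shortest path of length $3$ it has at least four vertices, so $\overline{\mathcal{C}_G}$ has more than one vertex and $\mathrm{diam}(\overline{\mathcal{C}_G})\ge 1$. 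Hence $\mathrm{diam}(\overline{\mathcal{C}_G})\in\{2,3\}$.

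I do not expect a real obstacle: this is the classical observation that a graph of diameter at least $3$ has a connected complement of diameter at most $3$, the only group-theoretic ingredient being Lemma \ref{com1} to exclude diameter $1$. The one point requiring a word of care is the degenerate case of too few vertices, which is harmless here because a shortest path of length $3$ already forces four distinct vertices.
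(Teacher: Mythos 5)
Your proof is correct, and its core is the same as the paper's: pick $x,y$ at distance $3$ in $\mathcal{C}_G$, observe that every other vertex is adjacent in $\overline{\mathcal{C}_G}$ to $x$ or to $y$, and route any pair $a,b$ through the edge $x$--$y$ of $\overline{\mathcal{C}_G}$ to get a path of length at most $3$, with Lemma \ref{com1} excluding diameter $1$. The one genuine difference is how the domination of $\overline{\mathcal{C}_G}$ by $\{x,y\}$ is obtained. The paper invokes Lemma \ref{d(x,y)=3} to get $G=Cyc_G(x)\cup Cyc_G(y)$, and that lemma is stated only for \emph{finite} non-cyclic groups (its proof is deferred to \cite{AM}), whereas Lemma \ref{d-comp=3} is asserted for arbitrary non locally cyclic groups. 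You instead derive the same fact directly from the triangle inequality: a vertex adjacent in $\mathcal{C}_G$ to both $x$ and $y$ would force $d_{\mathcal{C}_G}(x,y)\le 2$. This purely graph-theoretic derivation uses no group theory beyond Lemma \ref{com1}, needs no finiteness, and is essentially the classical fact that a graph of diameter at least $3$ has connected complement of diameter at most $3$; in that sense your argument is slightly more elementary and covers the infinite case without leaning on a lemma whose stated hypotheses do not quite match.
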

\begin{proof}
 Let $x,y\in G\backslash Cyc(G)$ such that $d_{\mathcal{C}_G}(x,y)=3$. By Lemma \ref{d(x,y)=3},
 we have $G=C_x \cup C_y$, where $C_x=Cyc_G(x)$ and $C_y=Cyc_G(y)$.
Now let $a$ and $b$ be two distinct elements of $G\backslash
Cyc(G)$. If $K=\langle a,b \rangle$ is cyclic, then
$d_{\overline{\mathcal{C}_G}}(a,b)=1$. Suppose that $K$ is
non-cyclic. We may assume without loss of generality that $a\in
C_x\backslash C_y$ and $b\in C_y\backslash C_x$, (otherwise
$d_{\overline{\mathcal{C}_G}}(a,b)=2$ as either $a-x-b$ or
$a-y-b$ is a path of length two in $\overline{\mathcal{C}_G}$).
In this case, $a-x-y-b$ is a path of length 3 in
$\overline{\mathcal{C}_G}$. Now Lemma \ref{com1} completes the
proof.
\end{proof}
We have checked by {\sf GAP} \cite{GAP}, that for each finite
non-cyclic group $G$ of order at most 100, the following holds
$$\mathrm{diam}(\mathcal{C}_{G})=3 \Longleftrightarrow \mathrm{diam}\big(\overline{\mathcal{C}_{G}}\big)=3.$$
We were unable to prove the equality
$\mathrm{diam}\big(\overline{\mathcal{C}_{G}}\big)=3$ in Lemma
\ref{d-comp=3} for all non locally cyclic groups $G$. So we may
pose the following question:
\begin{qu}
In Lemma $\ref{d-comp=3}$, for which non locally cyclic group
$G$  does the equality
$\mathrm{diam}\big(\overline{\mathcal{C}_{G}}\big)=3$  holds?
\end{qu}
The following is an example of a finite non-cyclic group $G$ with
$\mathrm{diam}(\mathcal{C}_G)=2$ and
$\mathrm{diam}(\overline{\mathcal{C}_G})=4$. Let $G = C_2 \times
F$ be the direct product of a cyclic group of order 2 generated
by $z$ say, with a Frobenius group $F$ of order $6 \cdot 7$
(which is not the dihedral group $D_{42}$). A Sylow 3-subgroup
(there are seven of these) is cyclic of order 3, and if $P$ and
$Q$ are two distinct ones, then $C_G(P) \cap C_G(Q) = \langle
z\rangle$. In particular, if $x$ and $y$ are two non-central
elements of $G$, then $x$ fails to centralize at least 6 Sylow
3-subgroups, and then $x$ and $y$ together fail to centralize at
least 5 of these. Thus, the distance $d_{\mathcal{C}_G}(x, y)$ in
the non-cyclic graph is at most 2. On the other hand, if exactly
one of these elements, say $x$, is central (so $x=z$), then
choose a Sylow 3-subgroup $P$ which does not centralize $y$, and
then choose an element $g$ of order 6 in the centralizer
$C_G(P)$. Then we have the path $x-g-y$ of length 2 in the non-cyclic graph of $G$. This establishes
$\mathrm{diam}(\mathcal{C}_G)=2$.\\
In the cyclic graph $\overline{\mathcal{C}_G}$, every element has
distance at most 2 from the central element $z$. Certainly,
elements of odd order are directly adjacent to $z$, elements of
even order $\not= 2$ are connected to elements of odd order, so
have distance $\leq 2$ from $z$, while a non-central involution
centralizes some (unique) Sylow 3-subgroup of $G$ so that it too
has distance $\leq 2$ from $z$. Hence
$\mathrm{diam}(\overline{\mathcal{C}_G})\leq 4$. It remains to
find two elements $u$ and $v$ whose distance is exactly 4 in the
cyclic graph $\overline{\mathcal{C}_G}$.\\ Choose $u$ and $v$ to
be non-central involutions centralizing two distinct Sylow
3-subgroups, say $P=\langle g\rangle$ and $Q=\langle h\rangle$,
respectively. Certainly $u-g-z-h-v$ is a path of length 4 in the
cyclic graph $\overline{\mathcal{C}_G}$, and we argue that there
is no shorter path from $u$ to $v$. Clearly, $u\not=v$ and $u, v$
are not adjacent in the cyclic graph. Moreover, $C_G(u)\cap
C_G(v)=\langle z\rangle$ shows that there is no path of length 2
from $u$ to $v$ (as neither $\langle u, z\rangle$ nor $\langle v,
z\rangle$ is cyclic). Furthermore, if $x$ is any element adjacent
to $u$, then $\langle x\rangle$ is either $P$ or $P\langle
u\rangle$. Therefore, in any path from $u$ to any other element,
say $u-x-\cdots$ we may replace $x$ by an appropriate generator
of $P$. If this path ends at $v$, then the ending $\cdots y-v$
may be adjusted similarly so that $y$ is a generator of $Q$. As
$\langle x, y\rangle$ is not cyclic, the total length of the path
is $\geq
4$.\\

Two other examples are  {\tt SmallGroup(48,11)} and {\tt
SmallGroup(48,12)} in {\sf GAP} \cite{GAP}.
 It is also checked that for all non-cyclic
groups $G$ of order at most 100, either
$\overline{\mathcal{C}_G}$ is disconnected or
$\mathrm{diam}(\overline{\mathcal{C}_G})\in\{3,4\}$.\\
\begin{lem}
Let $G$ be a non locally cyclic group. Then
\begin{enumerate}
\item $\gamma(\overline{\mathcal{C}_G})\geq 2$. The equality holds
if and only if $\mathrm{diam}(\mathcal{C}_G)=3$.
\item $\gamma(\mathcal{C}_G)=1$ if and only if $Cyc(G)=1$ and
there is an element $x$ of order $2$ such that $Cyc_G(x)=\langle
x \rangle$.
\item  If  either
$G=E*H$ is the free product of a non-trivial elementary abelian
$2$-group $E$
 with an arbitrary group $H$ such that either $|E|>2$ or $|H|>1$;
or $G$ has an abelian  $2'$-subgroup $A$
 and an element $x$ of order $2$ such that $G=A\langle x\rangle$ and  $a^x=a^{-1}$ for all $a\in
 A$,  then
 $\gamma(\mathcal{C}_G)=1$.
\end{enumerate}
\end{lem}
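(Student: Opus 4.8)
\emph{Part (1).} The plan is: rule out a one-vertex dominating set, then identify exactly when a two-vertex set dominates, and feed this into Lemma~\ref{d(x,y)=3}. For $\gamma(\overline{\mathcal C_G})\ge 2$: given a vertex $v$, since $v\notin Cyc(G)$ there is $y\in G$ with $\langle v,y\rangle$ non-cyclic; then $y\notin Cyc(G)$ and $y\ne v$, so $y$ is a vertex of $\overline{\mathcal C_G}$ not adjacent to $v$, and $\{v\}$ fails to dominate. For the equality, write $Cyc_G(z)=\{g\in G:\langle g,z\rangle\ \text{is cyclic}\}$ and note $Cyc(G)\cup\{z\}\subseteq Cyc_G(z)$; a routine unwinding then shows that a pair of distinct vertices $\{x,y\}$ dominates $\overline{\mathcal C_G}$ if and only if $G=Cyc_G(x)\cup Cyc_G(y)$. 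By Lemma~\ref{d(x,y)=3} the latter is equivalent to $d_{\mathcal C_G}(x,y)=3$, and since $\mathrm{diam}(\mathcal C_G)\le 3$ (Proposition~3.2 of \cite{AM}) this yields $\gamma(\overline{\mathcal C_G})\le 2\iff\mathrm{diam}(\mathcal C_G)=3$; combined with $\gamma\ge 2$ we are done. (The direction $\mathrm{diam}(\mathcal C_G)=3\Rightarrow\gamma\le 2$ is also immediate directly: if $d_{\mathcal C_G}(x,y)=3$ and a vertex $v$ were non-adjacent in $\overline{\mathcal C_G}$ to each of $x,y$, then $x-v-y$ would be an edge-path of length $2$ in $\mathcal C_G$.)

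\emph{Part (2).} Here $\gamma(\mathcal C_G)=1$ says some vertex $x$ is $\mathcal C_G$-adjacent to every other vertex, i.e. $Cyc_G(x)\subseteq Cyc(G)\cup\{x\}$; with the reverse inclusion always valid this says $Cyc_G(x)=Cyc(G)\cup\{x\}$. First I would force $|x|=2$: since $\langle x\rangle\subseteq Cyc_G(x)=Cyc(G)\cup\{x\}$, every element of $\langle x\rangle\setminus\{x\}$ lies in the subgroup $Cyc(G)$, and if $|x|>2$ then $\langle x\rangle\setminus\{x\}$ generates $\langle x\rangle$, forcing the contradiction $x\in Cyc(G)$. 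Next, $Cyc(G)=1$: if $1\ne c\in Cyc(G)$ then $\langle x,xc\rangle=\langle x,c\rangle$ is cyclic, so $xc\in Cyc_G(x)=Cyc(G)\cup\{x\}$; but $xc\ne x$, and $xc\in Cyc(G)$ would give $x=(xc)c^{-1}\in Cyc(G)$ --- a contradiction either way. Hence $Cyc_G(x)=\{1,x\}=\langle x\rangle$. Conversely, if $Cyc(G)=1$ and $|x|=2$ with $Cyc_G(x)=\langle x\rangle$, then every vertex $v\ne x$ satisfies $v\notin\{1,x\}=Cyc_G(x)$, so $\langle x,v\rangle$ is non-cyclic and $\{x\}$ dominates $\mathcal C_G$.

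\emph{Part (3).} In each case the plan is to produce an involution $x$ with $Cyc_G(x)=\langle x\rangle$ and $Cyc(G)=1$ and then quote part~(2). For the generalized dihedral group $G=A\langle x\rangle$ with $a^{x}=a^{-1}$ and $A$ a non-trivial $2'$-group, take the given $x$: for $v\in G\setminus\{1,x\}$ one has $v\in A\setminus\{1\}$, or $v\in Ax$ with $xv\in A\setminus\{1\}$, so $\langle x,v\rangle$ contains a non-identity element $a$ of $A$ together with $x$, hence contains $\langle a\rangle\rtimes\langle x\rangle$, a dihedral group of order $2|a|\ge 6$ ($|a|$ odd, $\ge 3$); thus $\langle x,v\rangle$ is non-cyclic and $Cyc_G(x)=\{1,x\}$. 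For the free product $G=E*H$, take an involution $x\in E$: if $v\in G\setminus\{1,x\}$ has infinite order, $\langle x,v\rangle$ would be a cyclic group containing an involution, hence finite --- impossible; if $v$ is an involution $\ne x$, then $\langle x,v\rangle$ is dihedral of order $\ge 4$; and if $|v|>2$ is finite, then $v$ is conjugate into a free factor, and using that torsion elements lie in conjugates of factors, that centralizers of factor elements lie in that factor, and that a conjugate of one free factor meets the other trivially, one checks $\langle x,v\rangle$ cannot be cyclic. So again $Cyc_G(x)=\{1,x\}$. In both cases the stated hypotheses ($A\ne 1$; $|E|>2$ or $|H|>1$) guarantee some $v$ with $\langle x,v\rangle$ non-cyclic, whence $x\notin Cyc(G)$ and $Cyc(G)\subseteq Cyc_G(x)=\{1,x\}$ forces $Cyc(G)=1$; part~(2) then gives $\gamma(\mathcal C_G)=1$.

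\emph{Main obstacle.} The only genuinely non-elementary point is the free-product subcase of (3): analyzing cyclic subgroups of $E*H$ that contain a prescribed involution uses the standard structure theory of free products (equivalently, the action on the Bass--Serre tree). Everything else reduces to bookkeeping with the subgroup $Cyc(G)$, with element orders, and with the two dictionary translations in Parts~(1) and (2) between statements about $\mathcal C_G$, $\overline{\mathcal C_G}$ and conditions on the sets $Cyc_G(z)$.
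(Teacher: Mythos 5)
Your proposal is correct and follows essentially the same route as the paper: rule out singleton dominating sets for $\overline{\mathcal{C}_G}$, translate two-element domination into $G=Cyc_G(x)\cup Cyc_G(y)$ and invoke Lemma~\ref{d(x,y)=3}, derive $x^2=1$, $Cyc(G)=1$ and $Cyc_G(x)=\langle x\rangle$ for part (2), and reduce part (3) to part (2). The only cosmetic difference is in the free-product case of (3), where the paper gets $Cyc_G(x)\subseteq C_G(x)=E$ in one step and you instead case on the order of $v$ (and your derivation of $Cyc(G)=1$ from $Cyc(G)\subseteq Cyc_G(x)=\langle x\rangle$ is slightly shorter than the paper's center-based argument); both are valid.
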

\begin{proof}
(1) \; Suppose, for a contradiction, that there is a dominating
singleton set $\{x\}$ for $\overline{\mathcal{C}_G}$. Then for
all $a\in G\backslash Cyc(G)$ we have $a=x$ or $\langle a,x
\rangle$ is cyclic. It follows that $\langle a,x\rangle$ is
cyclic for all $a\in G$ and so $x\in Cyc(G)$, a contradiction. \\
Now suppose that $\gamma(\overline{\mathcal{C}_G})=2$. Then there
exist two distinct vertices $x$ and $y$ of
$\overline{\mathcal{C}_G}$ such that for every vertex
$a\not\in\{x,y\}$, either $\langle a,x\rangle$  or $\langle
a,y\rangle$ is cyclic. This implies that $G=Cyc_G(x) \cup
Cyc_G(y)$. Now Lemma \ref{d(x,y)=3} and \cite[Proposition 3.2]{AM}
complete the proof.\\
 (2) \; Suppose that $\mathcal{C}_G$ has a
dominating singleton set $\{x\}$. Since $\langle x,x^{-1}\rangle$
is trivially  cyclic,
$x=x^{-1}$ and so $x^2=1$.\\
If $t\in Cyc(G)$ and $t\not=1$, then $\langle tx,x\rangle$ is
cyclic. It follows that $tx=x$ and so $t=1$. Thus $Cyc(G)=1$.\\
If $c\in Cyc_G(x)$ and $c\not=x$, then $\langle c,x\rangle$ is
cyclic. This implies that $c\in Cyc(G)=1$ and so
$Cyc_G(x)=\langle x \rangle$.\\
For the converse, it is enough to note that for all $a\in
G\backslash \{1,x\}$, $\langle a,x\rangle$ is not cyclic. This
shows that $\{x\}$ is a dominating set for $\mathcal{C}_G$ and so
$\gamma(\mathcal{C}_G)=1$. \\
(3) \; Suppose that $G=E*H$ is the free product of an elementary
abelian $2$-group $E$  with an arbitrary group $H$ such that either
$|E|>2$ or $|H|>1$. Let $x$ be an arbitrary non-trivial element
of $E$. Then the centralizer $C_G(x)$ of $x$ in $G$ is equal to
$E$ and since $E$ is elementary abelian, we have that
$Cyc_E(x)=\langle x\rangle$. It follows that $Cyc_G(x)=\langle
x\rangle$. Now by part (2) it is enough to show that $Cyc(G)=1$. If $Z(G)=1$, then obviously $Cyc(G)=1$. If $Z(G)\not=1$, then
 $|H|=1$ as $|E|>2$. Thus $G=E$ and since $|E|>2$, there are two non-trivial distinct elements $a$ and $b$ in $E$. Since every non-trivial element of $E$ has order $2$,  $Cyc_E(g)=\langle g\rangle$ for all non-trivial elements $g\in E$. Thus  $$Cyc(G)=Cyc(E)\leq \langle a\rangle \cap \langle b\rangle=1,$$ as required. \\
It is straightforward to see that if $G$ is of second type, then
the singleton $\{x\}$ is a dominating set for $\mathcal{C}_G$.
\end{proof}
\section{\bf Finite groups whose non-cyclic graphs have small clique numbers}
In this section we characterize groups whose non-cyclic graphs
have  clique numbers at most $4$. If $\{x_1,x_2,\dots,x_n\}$ is a
maximal clique for the finite group $G$ then each $x_i$ is
contained in a (unique) maximal cyclic subgroup. Replacing each
$x_i$ by a generator of this maximal cyclic subgroup does no harm,
and the resulting collection of cyclic subgroups $\langle
x_1\rangle,\dots, \langle x_n\rangle$ is a complete list of all
the maximal cyclic subgroups, by Theorem 4.7 of \cite{AM}.
\begin{lem}\label{w>2}
Let $G$ be a non locally cyclic group. Then
$\omega(\mathcal{C}_G)\geq 3$.
\end{lem}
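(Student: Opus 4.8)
The plan is to exhibit an explicit triangle in $\mathcal{C}_G$. First I would record the elementary fact that a group is locally cyclic if and only if every $2$-generated subgroup is cyclic; the nontrivial direction is an easy induction on the number of generators, since $\langle a_1,\dots,a_n\rangle=\langle c,a_n\rangle$ once $\langle a_1,\dots,a_{n-1}\rangle=\langle c\rangle$. Consequently, as $G$ is not locally cyclic, there exist elements $a,b\in G$ with $H:=\langle a,b\rangle$ non-cyclic.

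The claim is then that $\{a,b,ab\}$ is a clique of size $3$ in $\mathcal{C}_G$, and I would verify three things. (i) The three elements are pairwise distinct: if $a=b$, or $a=ab$, or $b=ab$, then $H$ would be generated by a single element, hence cyclic, a contradiction. (ii) None of $a,b,ab$ lies in $Cyc(G)$: note first that $\langle a,ab\rangle=\langle b,ab\rangle=\langle a,b\rangle=H$, since $b=a^{-1}(ab)$ and $a=(ab)b^{-1}$; hence if one of $a,b,ab$ belonged to $Cyc(G)$, then pairing it with an appropriate one of $a,b$ would force $H$ to be cyclic, contrary to the choice of $H$. Thus $a,b,ab$ are genuine vertices of $\mathcal{C}_G$. (iii) Each of the pairs $\{a,b\}$, $\{a,ab\}$, $\{b,ab\}$ is an edge of $\mathcal{C}_G$: indeed each of these pairs generates $H$, by the identity noted in (ii), and $H$ is non-cyclic. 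Combining (i)--(iii) yields $\omega(\mathcal{C}_G)\ge 3$.

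There is essentially no serious obstacle here; the only point that needs a sentence of justification is the reduction from ``$G$ is not locally cyclic'' to ``$G$ has a non-cyclic $2$-generated subgroup''. (One could alternatively phrase the argument in the language of maximal cyclic subgroups used in the paragraph preceding the lemma: a non locally cyclic group has at least three distinct maximal cyclic subgroups, and choosing a generator of each gives a clique of size $3$; but the direct construction $\{a,b,ab\}$ above is shorter and self-contained.)
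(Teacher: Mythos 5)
Your proposal is correct and follows essentially the same route as the paper, which also picks $x,y$ with $\langle x,y\rangle$ non-cyclic and observes that $\{x,y,xy\}$ is a clique; you have merely spelled out the verifications (distinctness, membership in the vertex set, and that each pair generates the same non-cyclic subgroup) that the paper leaves implicit.
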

\begin{proof}
Since $G$ is not locally cyclic, there exists two elements $x$
and $y$ such that $\langle x,y\rangle$ is not cyclic. Thus
$\{x,y,xy\}$ is a clique in $\mathcal{C}_G$. This completes the
proof.
\end{proof}
\begin{lem}\label{w-cyc}
Let $G$ be a non locally cyclic group whose non-cyclic graph has
no  infinite clique. Then $\omega(\Gamma_G)$ is finite and
$\omega(\mathcal{C}_G)=\omega(\mathcal{C}_{\frac{G}{Cyc(G)}})$.
\end{lem}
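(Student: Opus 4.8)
The plan is to push everything through the natural epimorphism $x\mapsto\overline{x}$ from $G$ onto $\overline{G}:=G/Cyc(G)$, exploiting the content of Lemma~\ref{d=3}: $x-y$ is an edge of $\mathcal{C}_G$ if and only if $\overline{x}-\overline{y}$ is an edge of $\mathcal{C}_{\overline{G}}$. The point is that this bar map induces a correspondence, bijective at the level of cardinalities, between the cliques of $\mathcal{C}_G$ and the cliques of $\mathcal{C}_{\overline{G}}$, so that the comparison with the quotient is immediate; the only genuinely new point is that the hypothesis should be upgraded from ``no infinite clique'' to ``bounded clique sizes''.

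For the correspondence: if $X$ is a clique of $\mathcal{C}_G$ and $x,y\in X$ are distinct with $\overline{x}=\overline{y}$, then $c:=xy^{-1}\in Cyc(G)$, so $\langle x,y\rangle=\langle y,c\rangle$ is cyclic by the very definition of $Cyc(G)$ — contradicting that $x-y$ is an edge. Hence the bar map is injective on $X$, and by Lemma~\ref{d=3} it carries $X$ onto a clique $\overline{X}$ of $\mathcal{C}_{\overline{G}}$ with $|\overline{X}|=|X|$. Conversely, given a clique $Y$ of $\mathcal{C}_{\overline{G}}$, choosing one preimage of each vertex of $Y$ yields $|Y|$ pairwise distinct vertices of $\mathcal{C}_G$ that are pairwise adjacent, again by Lemma~\ref{d=3}; so they form a clique of $\mathcal{C}_G$ of cardinality $|Y|$. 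Therefore $\mathcal{C}_G$ and $\mathcal{C}_{\overline{G}}$ realise exactly the same clique cardinalities; in particular one has an infinite clique iff the other does, and $\omega(\mathcal{C}_G)=\omega(\mathcal{C}_{\overline{G}})$, which is the second assertion.

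It remains to show that ``$\mathcal{C}_G$ has no infinite clique'' forces $\omega(\mathcal{C}_G)$ to be \emph{finite}, i.e. that the clique sizes are bounded, not merely that each clique is finite. Since the union of a chain of cliques is a clique, Zorn's lemma gives a maximal clique $\{x_1,\dots,x_m\}$, which is finite by hypothesis; and maximality forces every $y\in G$ to satisfy $\langle y,x_i\rangle$ cyclic for some $i$ (either $y$ is one of the $x_i$, or $\{x_1,\dots,x_m,y\}$ fails to be a clique through a missing edge $y-x_i$, or $y\in Cyc(G)$), whence $G=\bigcup_{i=1}^m Cyc_G(x_i)$. As $Cyc_G(x_i)\subseteq C_G(x_i)$ (elements generating a cyclic, hence abelian, subgroup commute), $G$ is the union of the $m$ subgroups $C_G(x_1),\dots,C_G(x_m)$; discarding superfluous terms, B.~H. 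Neumann's theorem on finite coverings of a group yields a subgroup $N$ of finite index in $G$ contained in the intersection of the retained centralizers. The crux — and the step I expect to be the main obstacle — is to leverage this finite-index ``almost-central'' subgroup to bound all clique sizes: for a clique $C$ one has $|C\cap Ng|\ge |C|/[G:N]$ for some coset $Ng$, any two of whose elements differ by an element of $N$, with $N$ centralizing the relevant $x_i$, and one wants to convert this into a bound on $|C\cap Ng|$ independent of $C$; equivalently, one must rule out a group whose non-cyclic graph has arbitrarily large finite cliques but no infinite clique (for instance by iterating the covering construction inside the centralizers $C_G(x_i)$ and extracting an infinite clique ``in the limit''). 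Once that boundedness is in place, $\omega(\mathcal{C}_G)<\infty$ and the proof is complete.
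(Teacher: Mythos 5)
Your first half --- the cardinality-preserving correspondence between cliques of $\mathcal{C}_G$ and cliques of $\mathcal{C}_{G/Cyc(G)}$, and hence the equality of the two clique numbers once finiteness is available --- is correct: the injectivity of the bar map on a clique (via $xy^{-1}\in Cyc(G)$ forcing $\langle x,y\rangle=\langle xy^{-1},y\rangle$ cyclic) and the lifting of cliques from the quotient both work, given the edge correspondence already proved in Lemma \ref{d=3}. This part is in substance a self-contained replacement for the appeal to Lemma 2.3-(2) of \cite{AM}; the paper itself proves the whole lemma only by citing Theorem 4.2 and Lemma 2.3-(2) of \cite{AM}, so attempting a direct argument is reasonable.

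The second half, however, contains a genuine gap, and you have flagged it yourself: you never prove that the absence of an infinite clique bounds the clique sizes. This cannot follow from graph-theoretic generalities (a disjoint union of complete graphs $K_n$ for all $n$ has arbitrarily large finite cliques and no infinite one), so the group structure must carry the argument, and that is exactly where your proposal stops. After covering $G$ by the sets $Cyc_G(x_i)$ attached to a finite maximal clique and passing to centralizers and B.~H.~Neumann's covering theorem, you obtain a finite-index subgroup $N$ inside an intersection of centralizers; but $N$ centralizing the $x_i$ says nothing about $\langle u,v\rangle$ being cyclic for $u,v$ in a common coset of $N$, so no bound on $|C\cap Ng|$, and hence none on $|C|$, follows. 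The missing ingredient is precisely Theorem 4.2 of \cite{AM}: if $\mathcal{C}_G$ has no infinite clique then $G/Cyc(G)$ is finite, after which your clique correspondence gives $\omega(\mathcal{C}_G)=\omega(\mathcal{C}_{G/Cyc(G)})<\infty$ at once. That theorem --- the cyclicizer analogue of Neumann's result \cite{N} that a group with no infinite set of pairwise non-commuting elements is centre-by-finite --- is the real content of the lemma, and your sketch of ``iterating the covering construction and extracting an infinite clique in the limit'' is not a proof of it.
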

\begin{proof}
It follows from Theorem 4.2 and Lemma 2.3-(2) of \cite{AM}.
\end{proof}
Thus by Lemma \ref{w-cyc} and Lemma 2.3-(2) of \cite{AM}, to
characterize groups $G$ with finite fixed $\omega(\mathcal{C}_G)$,
it is enough to characterize finite ones with trivial
cyclicizers.\\ We use the following result in the proof of
Theorems \ref{2.7} and \ref{non-sol-31}
\begin{lem}\label{G>G/N}
Let $G$ be a non locally cyclic group such that
$\omega(\mathcal{C}_G)$ is finite. If $N$ is a normal subgroup of
$G$ such that  $G/N$ is not locally cyclic, then
$\omega(\mathcal{C}_{\frac{G}{N}})\leq \omega(\mathcal{C}_{G})$,
with equality if and only if $N\leq Cyc(G)$.
\end{lem}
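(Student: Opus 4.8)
The plan is to obtain the inequality $\omega(\mathcal{C}_{G/N})\le\omega(\mathcal{C}_{G})$ by lifting cliques from $\mathcal{C}_{G/N}$ to $\mathcal{C}_{G}$, to obtain the implication $N\le Cyc(G)\Rightarrow$ equality by pushing a maximum clique of $\mathcal{C}_{G}$ down to $G/N$ as in the proof of Lemma \ref{d=3}, and to obtain the reverse implication contrapositively, by manufacturing an extra clique vertex when $N\not\le Cyc(G)$.

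First I would prove $\omega(\mathcal{C}_{G/N})\le\omega(\mathcal{C}_{G})$. Given any clique $\{\bar x_1,\dots,\bar x_k\}$ of $\mathcal{C}_{G/N}$ (a maximum one has $k\ge 3$ by Lemma \ref{w>2}), pick preimages $x_i\in G$; since $\langle\bar x_i,\bar x_j\rangle$ is a non-cyclic homomorphic image of $\langle x_i,x_j\rangle$, each $\langle x_i,x_j\rangle$ is non-cyclic, so the $x_i$ are distinct, none lies in $Cyc(G)$ (use any $j\ne i$), and $\{x_1,\dots,x_k\}$ is a clique of $\mathcal{C}_{G}$ of the same size; hence $\omega(\mathcal{C}_{G/N})\le\omega(\mathcal{C}_{G})<\infty$. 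For the implication $N\le Cyc(G)\Rightarrow$ equality it then remains to show $\omega(\mathcal{C}_{G/N})\ge\omega(\mathcal{C}_{G})$: take a maximum clique $\{x_1,\dots,x_n\}$ of $\mathcal{C}_{G}$ with $n=\omega(\mathcal{C}_{G})$ and pass to $\{\bar x_1,\dots,\bar x_n\}$. If $\langle\bar x_i,\bar x_j\rangle$ were cyclic, say generated by $\bar g$, then writing $x_i=g^{a}\nu_1$, $x_j=g^{b}\nu_2$ with $g\in G$ and $\nu_1,\nu_2\in N$ and running the argument of Lemma \ref{d=3} with $Cyc(G)$ replaced by $N$ (using that $N\le Cyc(G)$ is locally cyclic, Lemma 2.3 of \cite{AM}), we would get $\langle x_i,x_j\rangle\le\langle g,\nu\rangle$ cyclic for some $\nu\in Cyc(G)$, a contradiction; similarly $\bar x_i=\bar x_j$ forces $x_i^{-1}x_j\in Cyc(G)$ and $\langle x_i,x_j\rangle$ cyclic. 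So the $\bar x_i$ form an $n$-clique of $\mathcal{C}_{G/N}$, giving equality.

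For equality $\Rightarrow N\le Cyc(G)$ I would prove the contrapositive. Assume $N\not\le Cyc(G)$ and put $n=\omega(\mathcal{C}_{G/N})$; by the above $\omega(\mathcal{C}_{G})\ge n$, and if it is strict we are done, so suppose $\omega(\mathcal{C}_{G})=n$ and seek a contradiction. By Theorem 4.7 of \cite{AM} I may take a maximum clique of $\mathcal{C}_{G/N}$ of the form $\{\bar m_1,\dots,\bar m_n\}$ with the $\langle\bar m_i\rangle$ being precisely the maximal cyclic subgroups of $G/N$; lifting a generator of each and enlarging inside $G$ produces maximal cyclic subgroups $M_i=\langle m_i\rangle$ of $G$ with $M_iN/N=\langle\bar m_i\rangle$, so the $M_i$ are pairwise distinct and $\{m_1,\dots,m_n\}$ is an $n$-clique of $\mathcal{C}_{G}$. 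Being of maximum size it is a maximum clique, so by Theorem 4.7 of \cite{AM} the $M_i$ are \emph{all} the maximal cyclic subgroups of $G$; since $Cyc(G)$ is precisely their intersection (an element lies in $Cyc(G)$ iff it lies in every maximal cyclic subgroup), $N\not\le Cyc(G)$ yields an index, say $i=1$, and some $c\in N$ with $c\notin M_1$, whence $\langle m_1,c\rangle=\langle M_1,c\rangle$ is non-cyclic by maximality of $M_1$. Put $w=m_1c$. Then $\bar w=\bar m_1$, so for $j\ge 2$ the group $\langle w,m_j\rangle$ has non-cyclic image $\langle\bar m_1,\bar m_j\rangle$ in $G/N$ and is itself non-cyclic, while $\langle w,m_1\rangle=\langle m_1,c\rangle$ is non-cyclic; also $w\ne m_1$ (as $c\ne 1$), $w\ne m_j$ (as $\bar m_1\ne\bar m_j$), and $w\notin Cyc(G)$. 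Hence $\{m_1,\dots,m_n,w\}$ is an $(n+1)$-clique of $\mathcal{C}_{G}$, contradicting $\omega(\mathcal{C}_{G})=n$.

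The step I expect to be the crux is this contrapositive: it rests on Theorem 4.7 of \cite{AM} (that $\omega(\mathcal{C}_{G})$ counts the maximal cyclic subgroups and that a maximum clique is a transversal of their generators) and on the bookkeeping needed to lift such a clique from $G/N$ to $G$ — checking that distinct maximal cyclic subgroups of $G/N$ are hit by distinct $M_i$ and that the lifts $m_i$ stay pairwise non-cyclic — after which the extra vertex $w=m_1c$ comes for free. A minor preliminary is the reduction allowing Theorem 4.7 of \cite{AM} to be invoked (e.g.\ passing through $G/Cyc(G)$ via Lemma \ref{w-cyc} if $G$ is infinite). The first two parts are otherwise routine adaptations of the argument in Lemma \ref{d=3}.
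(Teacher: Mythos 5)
Your argument is correct and is essentially the paper's own proof: both hinge on lifting a maximum clique of $\mathcal{C}_{G/N}$ to generators of maximal cyclic subgroups of $G$, invoking Theorem 4.7 of \cite{AM} to conclude these are \emph{all} the maximal cyclic subgroups, and then testing the set $\{m_1,\dots,m_n,cm_1\}$ with $c\in N$ — the paper deduces directly that $\langle m_1,cm_1\rangle$ must be cyclic so $c\in\bigcap_i M_i=Cyc(G)$, while you run the identical step contrapositively to manufacture an $(n+1)$-clique. The only cosmetic difference is that the paper gets the implication $N\leq Cyc(G)\Rightarrow$ equality from a short chain of inequalities via Lemma \ref{w-cyc} rather than re-running the Lemma \ref{d=3} argument, but the content is the same.
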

\begin{proof}
Let $\omega(\mathcal{C}_G)=n$ and $\overline{G}=G/N$.  If
$L/N=Cyc(\overline{G})$, then $Cyc(G)N\leq L$ and so by Lemma
\ref{w-cyc}
$$\omega\big(\mathcal{C}_{\frac{\overline{G}}{Cyc(\overline{G})}}\big)=\omega\big(\mathcal{C}_{\frac{G}{L}}\big)\leq
\omega\big(\mathcal{C}_{\frac{G}{Cyc(G)N}}\big)\leq
\omega\big(\mathcal{C}_{\overline{G}}\big)=\omega\big(\mathcal{C}_{\frac{\overline{G}}{Cyc(\overline{G})}}\big).$$
Since by Theorem 4.2 of \cite{AM}, $G/Cyc(G)$ is finite, without
loss of generality, we may assume that $G$ is  finite.\\
 Clearly $\omega(\mathcal{C}_{G/N})\leq \omega(\mathcal{C}_G)$. Now suppose that $\omega(\mathcal{C}_{G/N})=\omega(\mathcal{C}_G)$. Then there exist
 elements  $y_1,\dots,y_{n}\in G$  such that
$M=\{y_iN\;|\; i=1,\dots,n\}$ is a clique of
$\mathcal{C}_{\overline{G}}$. Choose now  a maximal cyclic
subgroup $C_i$ of $G$ containing $y_i$ ($C_i$ is in fact uniquely
determined by $y_i$). There is no harm in replacing each $y_i$ by
a generator $x_i$ of $C_i$. Now it follows from Theorem 4.7 of
\cite{AM} that  $C_1,\dots,C_n$ are all the maximal cyclic
subgroups of $G$. Consider an arbitrary element $a\in N$. Then
$$\{x_1,x_2\dots,x_{n},ax_1\}$$ is not a
clique of $\mathcal{C}_{G}$. Since $M$ is a clique for
$\mathcal{C}_{\overline{G}}$, it follows that
$$\langle x_1,ax_1\rangle=\langle a,x_1\rangle \;\text{is cyclic for all}\; a\in N. $$
This says that  $a\in\langle a,x_1\rangle=C_1=\langle x_1\rangle$
for all $a\in N$. But $x_1$ may be replaced by any of the $x_i$,
and we conclude  that  $N\leq \displaystyle\bigcap_{i=1}^n C_i=
Cyc(G)$. This completes the proof.
\end{proof}
Throughout for a prime number $p$ we denote by $\nu_p(G)$ the
number of subgroups of order $p$ in a group $G$. It is well-known
that $\nu_p(G)\equiv 1 \;\text{mod}\; p$ for a finite group $G$,
whenever $p$ divides $|G|$.
\begin{lem}\label{2.2}
Let $G$ be a finite group  with trivial cyclicizer. Then for
any prime divisor $p$ of $|G|$,
$\nu_p(G)\leq\omega(\mathcal{C}_G)$. If $p_1,\dots,p_k$ are
distinct prime numbers such that $G$ has no element of order
$p_ip_j$ for all distinct $i,j$, then $\sum_{i=1}^k
\nu_{p_i}(G)\leq \omega(\mathcal{C}_G)$.
\end{lem}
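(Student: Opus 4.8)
The plan is to produce, from each relevant subgroup of order $p$, a distinct vertex of a single clique in $\mathcal{C}_G$. First I would fix, for every prime divisor $p$ of $|G|$, a set $\{H_1,\dots,H_{\nu_p(G)}\}$ of all the subgroups of order $p$ in $G$, and in each $H_i$ choose a generator $x_i$. The key observation is that any two of these generators, say $x_i$ and $x_j$ with $i\neq j$, cannot generate a cyclic group: if $\langle x_i,x_j\rangle$ were cyclic it would be a cyclic $p$-group (both generators have order $p$) and hence would contain a \emph{unique} subgroup of order $p$, forcing $H_i=H_j$, a contradiction. Thus $\{x_1,\dots,x_{\nu_p(G)}\}$ is a clique in $\mathcal{C}_G$, provided each $x_i$ is actually a vertex, i.e. lies in $G\setminus Cyc(G)$; but $Cyc(G)=1$ by hypothesis and each $x_i\neq 1$, so this is automatic. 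Hence $\nu_p(G)\le\omega(\mathcal{C}_G)$.

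For the second statement the idea is the same, only now I collect generators across \emph{several} primes at once. Let $p_1,\dots,p_k$ be distinct primes such that $G$ has no element of order $p_ip_j$ for $i\neq j$, and for each $p_\ell$ pick generators $x^{(\ell)}_1,\dots,x^{(\ell)}_{\nu_{p_\ell}(G)}$ of the distinct subgroups of order $p_\ell$. Two generators coming from the same prime $p_\ell$ are non-adjacent in the cyclic graph by exactly the argument above. Two generators $x^{(\ell)}_i$ and $x^{(m)}_j$ coming from \emph{different} primes $p_\ell\neq p_m$ also cannot generate a cyclic group: a cyclic group containing an element of order $p_\ell$ and one of order $p_m$ contains an element of order $p_\ell p_m$, contradicting the hypothesis. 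Therefore the whole collection $\{x^{(\ell)}_i : 1\le\ell\le k,\ 1\le i\le\nu_{p_\ell}(G)\}$ is a clique in $\mathcal{C}_G$; since its size is $\sum_{\ell=1}^k\nu_{p_\ell}(G)$, this gives $\sum_{\ell=1}^k\nu_{p_\ell}(G)\le\omega(\mathcal{C}_G)$.

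There is really only one point that needs a moment's care, and it is the main (mild) obstacle: one must be sure that the listed generators are genuinely \emph{distinct} vertices, so that the clique has the claimed cardinality. Generators of distinct subgroups of order $p$ are distinct because each element of prime order generates a unique subgroup of order $p$; generators coming from different primes are distinct because they have different orders. Combining this with the non-adjacency (in the cyclic graph) established above completes the argument. No deep input is needed here — the result is essentially a bookkeeping lemma that will be fed into the counting arguments for Theorems~\ref{2.7} and~\ref{non-sol-31}.
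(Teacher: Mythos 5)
Your proposal is correct and follows exactly the paper's argument: pick a generator of each subgroup of order $p$ (resp.\ of order $p_i$ for each $i$) and observe that these form a clique, since a cyclic group has a unique subgroup of each prime order and, in the second case, would contain an element of order $p_ip_j$. You merely spell out the adjacency and distinctness checks that the paper leaves implicit.
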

\begin{proof}
Let $C_1,\dots,C_{\nu_p(G)}$ be all the subgroups of order $p$ of
$G$. If $c_i$ is a generator of $C_i$, then
$\{c_1,\dots,c_{\nu_p(G)}\}$ is a clique in $\mathcal{C}_G$. Thus
$\nu_p(G)\leq \omega(\mathcal{C}_G)$, as required. To prove the
second part, for every $i\in\{1,\dots,k\}$ and every subgroup of
order $p_i$, take a generator of the subgroup, then the set
consisting of these generators is a clique in $\mathcal{C}_G$.
This completes the proof.
\end{proof}
\begin{lem}\label{2.3}
Let $G$ be a finite group $G$ with trivial cyclicizer. Let $p$ be
a prime number such that $p^{k-1}<\omega(\mathcal{C}_G)\leq p^k$,
for some $k\in\mathbb{N}$.
\begin{enumerate}
\item For every $p$-element $x$ of $G$, we have $x^{p^{k-1}}\in
Z(G)$.
 \item If $k=1$, then $G$ has no non-trivial $p$-element.
\item No Sylow $p$-subgroup of $G$ is  cyclic of order
greater than $p^{k-1}$.
\end{enumerate}
\end{lem}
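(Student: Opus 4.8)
The plan is to prove each part by exhibiting in $\mathcal{C}_G$ a clique that exceeds the allowed size, the common device being translates of a cyclic $p$-subgroup by a fixed element. Let $x$ be a $p$-element, say of order $p^m$, let $g\in G$, and set $z:=x^{p^{k-1}}$. For distinct indices $i,j$ we have $(x^ig)(x^jg)^{-1}=x^{i-j}$, so $x^{i-j}\in\langle x^ig,x^jg\rangle$; writing $i-j=p^au$ with $p\nmid u$, if $a\le k-1$ then $a<m$, $\langle x^{i-j}\rangle=\langle x^{p^a}\rangle$ (of order $p^{m-a}$), and this contains $z$ because $p^a\mid p^{k-1}$. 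Hence, whenever $a\le k-1$, if $\langle x^ig,x^jg\rangle$ were cyclic (so abelian) then $g$ would centralise $z$. This is the engine behind all three parts.

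For part (1) I argue by contradiction. Suppose some $p$-element $x$ satisfies $z=x^{p^{k-1}}\notin Z(G)$; then $z\ne1$, so $|x|=p^m$ with $m\ge k$, and there is $g\in G$ with $z^g\ne z$. I claim that
\[
S=\{x\}\cup\{x^ig : 0\le i\le p^k-1\}
\]
is a clique of $\mathcal{C}_G$ of cardinality $p^k+1$. Distinctness and non-triviality of its elements are routine: $x^ig=x^jg$ forces $p^m\mid i-j$, impossible since $0\le i,j\le p^k-1<p^k\le p^m$; and $x^ig\in\langle x\rangle$ (in particular $x^ig=1$ or $x^ig=x$) would put $g$ in $\langle x\rangle$, forcing $z^g=z$. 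For the edges: $\langle x,x^ig\rangle=\langle x,g\rangle$ is non-cyclic, since otherwise $g$ commutes with $z\in\langle x\rangle$; and for $i\ne j$ in the range, $0<|i-j|<p^k$ gives $a\le k-1$, so the engine shows $\langle x^ig,x^jg\rangle$ is non-cyclic. Thus $\omega(\mathcal{C}_G)\ge p^k+1$, contradicting $\omega(\mathcal{C}_G)\le p^k$.

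Part (2) is the case $k=1$, where I instead pick $y$ with $\langle x,y\rangle$ non-cyclic (available because, $Cyc(G)$ being trivial, a non-trivial $p$-element $x$ cannot lie in $Cyc(G)$): then $\{x\}\cup\{x^iy : 0\le i\le p-1\}$ is a clique of size $p+1$, since for $i\ne j$ one has $\gcd(i-j,p)=1$, so $\langle x^{i-j}\rangle=\langle x\rangle$ and $\langle x^iy,x^jy\rangle$ contains $x$ hence $y$, and $\langle x,x^iy\rangle=\langle x,y\rangle$; this contradicts $\omega(\mathcal{C}_G)\le p$, so $G$ has no non-trivial $p$-element. For part (3) I use part (1) together with the elementary fact that, for a finite group, $Cyc(G)$ is the intersection of all maximal cyclic subgroups (every cyclic subgroup lying in a maximal one). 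Suppose a Sylow $p$-subgroup $P=\langle x\rangle$ is cyclic of order $p^m>p^{k-1}$, so $m\ge k$ and, by conjugacy, every Sylow $p$-subgroup is cyclic of order $p^m$. By part (1), $z:=x^{p^{k-1}}$ is a central element of order $p^{m-k+1}\ge p$; being a normal $p$-subgroup, $\langle z\rangle$ lies inside every Sylow $p$-subgroup. I then show $z$ lies in every maximal cyclic subgroup: let $M=\langle w\rangle$ be one and write $w=w_pw_{p'}$ as the product of its $p$-part and $p'$-part; the cyclic groups $\langle w_p\rangle$ and $\langle z\rangle$ both lie in a cyclic Sylow $p$-subgroup, hence are comparable, so either $\langle z\rangle\le\langle w_p\rangle$, giving $z\in\langle w\rangle=M$, or $\langle w_p\rangle$ is a proper subgroup of $\langle z\rangle$, in which case $M=\langle w\rangle\le\langle z\rangle\langle w_{p'}\rangle=\langle z\rangle\times\langle w_{p'}\rangle$, a cyclic group of order $|z|\,|w_{p'}|>|w_p|\,|w_{p'}|=|M|$, contradicting maximality of $M$. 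Hence $z\in Cyc(G)=1$, contradicting $z\ne1$.

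I expect the one genuinely delicate point to be the calibration in part (1): the clique must have size exactly $p^k+1$, obtained by adjoining the single vertex $x$ to the $p^k$ translates $x^ig$, and the simultaneous requirements that $x$ be adjacent to every $x^ig$ and that the $x^ig$ be pairwise adjacent force the exact bookkeeping $a\le k-1\Rightarrow z\in\langle x^{i-j}\rangle$, which is precisely why the range of $i$ is $\{0,\dots,p^k-1\}$ and cannot be enlarged. With part (1) in hand, parts (2) and (3) are short, part (3) requiring in addition only the description of $Cyc(G)$ as the intersection of the maximal cyclic subgroups and an easy comparison of $p$-parts.
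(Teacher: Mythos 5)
Your proof is correct and follows essentially the same route as the paper: the clique of translates $\{x\}\cup\{x^ig\}$ for part (1), its specialization at $k=1$ for part (2), and centrality together with cyclic Sylow $p$-subgroups forcing $x^{p^{k-1}}\in Cyc(G)=1$ for part (3) (the paper verifies $\langle x^{p^{k-1}},y\rangle$ cyclic directly rather than via the description of $Cyc(G)$ as the intersection of the maximal cyclic subgroups, but this is the same computation). Your restriction of the index range to $0\le i\le p^k-1$, so that $v_p(i-j)\le k-1$ and hence $x^{p^{k-1}}\in\langle x^{i-j}\rangle$, is in fact a necessary precision rather than mere bookkeeping: the paper uses all of $\langle x\rangle y$ and asserts $C_G(x^{i-j})\subseteq C_G(x^{p^{k-1}})$, which fails for pairs with $p^k\mid i-j$, so your calibration is the correct way to run the argument.
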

\begin{proof}
(1) \; Let $n=\omega(\mathcal{C}_G)$ and suppose that
$x^{p^{k-1}}\not=1$. The goal is to show that every $y\in G$
centralizes $x^{p^{k-1}}$. This is obvious, if $y\in\langle
x\rangle$  so  assume $y\in G -\langle x\rangle$. Then with
$X=\{x\} \cup \langle x\rangle y$, it is clear that $|X|\geq
n+1$. Next, as some two element subset of $X$ generates a cyclic
group, there are only two cases to consider. If one of these two
elements is $x$, then the cyclic subgroup in question is $\langle
x, x^iy\rangle=\langle x, y\rangle$, so clearly $y\in
C_G(x)\subseteq C_G(x^{p^{k-1}})$. If on the other hand the
elements are $x^iy$ and $x^jy$, then since $$\langle x^iy,
x^jy\rangle=\langle x^iy(x^jy)^{-1}, x^jy\rangle=\langle x^{i-j},
x^jy\rangle,$$ we conclude that $x^jy \in C_G(x^{i-j})\subseteq
C_G(x^{p^{k-1}})$, so clearly $y$ belongs to this last set as
well.\\
(2) \; In the proof of part (1), put $k=1$ . Since $i-j<p$,
$\gcd(i-j,p)=1$ and so $\left<x,y\right>$ is cyclic for all $y\in
G$. Thus $x\in
Cyc(G)$=1. This completes the proof of part (2).\\
(3) \; Suppose, for a contradiction, that $G$ has a cyclic Sylow
$p$-subgroup of order greater than $p^{k-1}$. Then
 by part (1),   $x^{p^{k-1}}\in Z(G)$ for every
$p$-element $x$ of $G$. Since  Sylow $p$-subgroups of $G$ are
cyclic, it follows that $\langle x^{p^{k-1}},y\rangle$ is cyclic
for all $y\in G$. This implies that  $x^{p^{k-1}}\in Cyc(G)=1$
for all $p$-elements $x\in G$, which gives a contradiction.
\end{proof}
For a group $G$, we denote the non-commuting graph of $G$ by
$\mathcal{A}_G$. This is the graph whose vertex set is $G\setminus
Z(G)$ and two vertices $x$ and $y$ are adjacent if $xy\neq yx$.
This graph was studied in \cite{AAM} and \cite{moghadam}.
\begin{lem}  \label{2.4} Let $G$ be an abelian group. Then
$\omega(\mathcal{C}_G)=3$ if and only if $G\cong
\mathbb{Z}_2\oplus \mathbb{Z}_2 \oplus T$, where $T\cong Cyc(G)$
is a locally cyclic torsion group in which all elements have odd
order.
\end{lem}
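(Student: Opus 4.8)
\emph{Overall strategy.} My plan is to first strip off the cyclicizer, then pin down the resulting finite group exactly, and finally reassemble $G$ from its primary components. Since $\omega(\mathcal{C}_G)=3$ is finite, $\mathcal{C}_G$ has no infinite clique, so by Lemma \ref{w-cyc} (together with Theorem 4.2 of \cite{AM}) the quotient $\overline G:=G/Cyc(G)$ is finite, non-locally-cyclic, abelian, has trivial cyclicizer, and $\omega(\mathcal{C}_{\overline G})=3$ (also $Cyc(\overline G)=1$ by Lemma 2.3 of \cite{AM}). A small observation I will use repeatedly is \emph{monotonicity under subgroups}: if $K\le H$ with $K$ not locally cyclic, then $\omega(\mathcal{C}_K)\le\omega(\mathcal{C}_H)$. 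Indeed $H$ is then not locally cyclic either, any $x\in K\setminus Cyc(K)$ lies in $H\setminus Cyc(H)$ (if $x\in Cyc(H)$ then $\langle x,y\rangle$ is cyclic for all $y\in K$, so $x\in Cyc(K)$), and for $x,y\in K$ the subgroup $\langle x,y\rangle$ is the same computed in $K$ or in $H$; hence any clique of $\mathcal{C}_K$ is a clique of $\mathcal{C}_H$.

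\emph{Determining $\overline G$.} For an odd prime $p$ dividing $|\overline G|$, taking $k=1$ in Lemma \ref{2.3} (valid since $p^0=1<3\le p$) and invoking part (2) shows $\overline G$ has no nontrivial $p$-element; hence $\overline G$ is a finite $2$-group, and it is not cyclic since $Cyc(\overline G)=1\ne\overline G$. Writing $\overline G\cong\mathbb{Z}_{2^{a_1}}\oplus\cdots\oplus\mathbb{Z}_{2^{a_r}}$ with $r\ge2$, Lemma \ref{2.2} gives $2^r-1=\nu_2(\overline G)\le 3$, so $r=2$. If $a_1\ge 2$, then $\overline G$ contains a copy of $\mathbb{Z}_4\oplus\mathbb{Z}_2$, and since $\{(1,0),(1,1),(0,1),(2,1)\}$ is a $4$-clique of $\mathcal{C}_{\mathbb{Z}_4\oplus\mathbb{Z}_2}$ (a direct check), monotonicity would force $\omega(\mathcal{C}_{\overline G})\ge 4$, a contradiction. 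Hence $a_1=a_2=1$ and $\overline G\cong\mathbb{Z}_2\oplus\mathbb{Z}_2$.

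\emph{Reassembling $G$.} Thus $Cyc(G)$ is locally cyclic of index $4$ in $G$. If $Cyc(G)=1$ we are done with $T=1$; otherwise the Proposition of Section 2 forces $G$ to be torsion or torsion-free, and the torsion-free case is impossible (then $Cyc(G)$, being torsion-free locally cyclic, embeds in $\mathbb{Q}$, and $G$ — abelian with a rank-one subgroup of finite index — would also embed in $\mathbb{Q}$, hence be locally cyclic). So $G$ is torsion abelian; write $G=\bigoplus_p G_p$. One checks $Cyc(G)\cap G_p=Cyc(G_p)$ for every $p$: the inclusion ``$\subseteq$'' is immediate, and for ``$\supseteq$'' use that for $x\in G_p$ and $y\in G$ the subgroup $\langle x,y\rangle$ splits as $\langle x,y_p\rangle\oplus\bigoplus_{q\ne p}\langle y_q\rangle$, a direct sum of groups of pairwise coprime order, which is cyclic exactly when $\langle x,y_p\rangle$ is. Consequently $G_p/Cyc(G_p)\cong(G/Cyc(G))_p$, so $G_p$ is locally cyclic for odd $p$, while $G_2/Cyc(G_2)\cong\mathbb{Z}_2\oplus\mathbb{Z}_2$. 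Now $G_2\le G$ is non-locally-cyclic, so $\omega(\mathcal{C}_{G_2})=3$ by monotonicity and Lemma \ref{w>2}; but if $\dim_{\mathbb F_2}G_2[2]\ge 3$ then $G_2\supseteq(\mathbb{Z}_2)^3$ (clique of size $7$), and if $G_2$ has an element of order $4$ then $G_2\supseteq\mathbb{Z}_4\oplus\mathbb{Z}_2$ (clique of size $4$), either contradicting $\omega(\mathcal{C}_{G_2})=3$; since $\dim G_2[2]\le 1$ would make $G_2$ locally cyclic, we get $\dim G_2[2]=2$ and $G_2$ elementary abelian, i.e. $G_2\cong\mathbb{Z}_2\oplus\mathbb{Z}_2$ and $Cyc(G_2)=1$. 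Therefore $G\cong\mathbb{Z}_2\oplus\mathbb{Z}_2\oplus T$ with $T:=\bigoplus_{p\text{ odd}}G_p$, which is a locally cyclic torsion group in which every element has odd order, and $Cyc(G)=\bigoplus_p Cyc(G_p)=T$. For the converse, if $G=\mathbb{Z}_2\oplus\mathbb{Z}_2\oplus T$ with $T$ as described, the same splitting shows $Cyc(G)=0\oplus T$, so every vertex of $\mathcal{C}_G$ has nonzero $(\mathbb{Z}_2\oplus\mathbb{Z}_2)$-component, and two vertices are adjacent precisely when these components differ; as $\mathbb{Z}_2\oplus\mathbb{Z}_2$ has only three nonzero elements, this forces $\omega(\mathcal{C}_G)\le 3$, and equality holds by Lemma \ref{w>2}.

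\emph{Main obstacle.} The genuinely delicate point is the bookkeeping in the reassembly step — verifying that the cyclicizer distributes over the primary decomposition, $Cyc(G)\cap G_p=Cyc(G_p)$ — since this is exactly what decouples the analysis of the $2$-part from that of the odd part. Everything else is routine: the subgroup-monotonicity of $\omega(\mathcal{C}_{-})$ and the two explicit clique computations in $\mathbb{Z}_4\oplus\mathbb{Z}_2$ and $(\mathbb{Z}_2)^3$.
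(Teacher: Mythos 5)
Your proof is correct and follows essentially the same route as the paper's: reduce modulo $Cyc(G)$, use Lemma \ref{2.3} and clique counting to force $G/Cyc(G)\cong \mathbb{Z}_2\oplus\mathbb{Z}_2$, show $G$ is torsion, and identify $Cyc(G)$ with the odd-order part of $G$. The only differences are in bookkeeping — you carry out the reassembly via the primary decomposition and the identity $Cyc(G)\cap G_p=Cyc(G_p)$, whereas the paper argues directly that a $2$-element in $Cyc(G)$ would produce a forbidden $\mathbb{Z}_2\oplus\mathbb{Z}_2\oplus\mathbb{Z}_2$ or $\mathbb{Z}_2\oplus\mathbb{Z}_4$ — so your write-up simply supplies details the paper leaves implicit.
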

\begin{proof} Suppose that $\omega(\mathcal{C}_G)=3$ and  $\overline{G}=G/Cyc(G)$. Since  $Cyc(\overline{G})=1$, then by Lemma \ref{2.3},
$\overline{G}$ is a $2$-group. Thus $\overline{G}\cong
\mathbb{Z}_{2^{\alpha_1}}\oplus\cdots \oplus
\mathbb{Z}_{2^{\alpha_k}}$, and as $\omega(\mathcal{C}_G)=3$, we
have $k=2$ and $\alpha_1=\alpha_2=1$. Therefore $\overline{G}\cong
\mathbb{Z}_2\oplus\mathbb{Z}_2$. Now it follows from parts (4)
and (5) of  Lemma 2.3 of \cite{AM} that $G$ is  torsion. If
$Cyc(G)$ contains an element of order $2$, then $G$ contains a
subgroup isomorphic to either $\mathbb{Z}_2\oplus \mathbb{Z}_2
\oplus \mathbb{Z}_2$ or $\mathbb{Z}_2\oplus \mathbb{Z}_4$, which
is not possible. Thus all elements of $Cyc(G)$ have odd order and
so $Cyc(G)$ is the $2'$-primary component of $G$ and so  $G\cong
\mathbb{Z}_2\oplus \mathbb{Z}_2 \oplus Cyc(G)$.\\ The converse is
clear.
\end{proof}
\begin{thm}\label{2.5}  Let $G$ be a non locally cyclic group. Then
$\omega(\mathcal{C}_G)=3$ if and only if $G/Cyc(G)\cong
\mathbb{Z}_2\oplus \mathbb{Z}_2$.
\end{thm}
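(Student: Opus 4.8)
The plan is to use the reduction noted right after Lemma~\ref{w-cyc}: since $\omega(\mathcal{C}_G)=\omega(\mathcal{C}_{G/Cyc(G)})$ and $G/Cyc(G)$ is finite with trivial cyclicizer, it suffices to show that a finite non-cyclic group $H$ with $Cyc(H)=1$ satisfies $\omega(\mathcal{C}_H)=3$ if and only if $H\cong\mathbb{Z}_2\oplus\mathbb{Z}_2$ (the theorem is then obtained by taking $H=G/Cyc(G)$). The ``if'' direction is immediate: the non-cyclic graph of $\mathbb{Z}_2\oplus\mathbb{Z}_2$ is the triangle on its three involutions, so its clique number is $3$; alternatively, quote Lemma~\ref{2.4} in the case $Cyc(G)=1$.

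For ``only if'', suppose $\omega(\mathcal{C}_H)=3$. By the remarks opening this section (which rest on Theorem~4.7 of \cite{AM}), a maximum clique of $\mathcal{C}_H$ may be replaced, without change of size, by generators of the (unique) maximal cyclic subgroups $C_1,C_2,C_3$ containing its vertices, and these $C_i$ form the complete list of maximal cyclic subgroups of $H$. They are distinct and proper (as $H$ is non-cyclic); since $H$ is finite, every element lies in some $C_i$, so $H=C_1\cup C_2\cup C_3$; and $\bigcap_{i=1}^{3}C_i=Cyc(H)=1$, since any element of $Cyc(H)$ lies in every maximal cyclic subgroup by maximality, while if $z\in\bigcap_i C_i$ then for each $y$ one picks $C_i\supseteq\langle y\rangle$ and gets $\langle z,y\rangle\le C_i$ cyclic. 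As no group is a union of two proper subgroups, $H$ is an irredundant union of three proper subgroups, so the classical theorem of Scorza on such covers gives $[H:C_i]=2$ for each $i$ and $H/(C_1\cap C_2\cap C_3)\cong\mathbb{Z}_2\oplus\mathbb{Z}_2$; since $C_1\cap C_2\cap C_3=1$, this is $H\cong\mathbb{Z}_2\oplus\mathbb{Z}_2$.

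If one would rather avoid Scorza's theorem, the ``only if'' direction can be run self-containedly, in the style of the proof of Lemma~\ref{2.4}: for every odd prime $p\mid |H|$ we have $p^0<3\le p^1$, so Lemma~\ref{2.3}-(2) (the case $k=1$) shows $H$ has no non-trivial $p$-element, hence $H$ is a $2$-group; then Lemma~\ref{2.2} gives $\nu_2(H)\le 3$, so $\nu_2(H)\in\{1,3\}$, and $\nu_2(H)=1$ is impossible because the unique involution would lie in $\langle y\rangle$ for every non-trivial $y$ and hence in $Cyc(H)=1$. With $\nu_2(H)=3$, the three involutions $t_1,t_2,t_3$ make $\langle t_1,t_2\rangle$ a dihedral $2$-group, necessarily of order $4$ since a dihedral $2$-group of order $\ge 8$ contains at least five involutions; thus $\langle t_1,t_2\rangle\cong\mathbb{Z}_2\oplus\mathbb{Z}_2$ and $t_3=t_1t_2$. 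The one genuinely delicate point left --- the crux, on a par with the appeal to Scorza on the other route --- is to exclude elements of order $\ge 4$, so that $H=\langle t_1,t_2\rangle$: here Lemma~\ref{2.3}-(1) puts the square of any such element in $Z(H)$, and, together with the fact that every involution of $H$ is one of $t_1,t_2,t_3$ and with the hypothesis $Cyc(H)=1$, this leads to a contradiction. Everything else is routine.
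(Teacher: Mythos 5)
Your main (Scorza) route is correct and genuinely different from the paper's. The paper first disposes of the abelian case via Lemma~\ref{2.4} (using Lemma~\ref{2.3} to see that $G/Cyc(G)$ is a $2$-group of exponent $2$), and then kills the non-abelian case by passing to the non-commuting graph: from $\omega(\mathcal{C}_G)\geq\omega(\mathcal{A}_G)$ it gets $\omega(\mathcal{A}_G)=3$, invokes the Belcastro--Sherman classification to force $G/Z(G)\cong\mathbb{Z}_2\oplus\mathbb{Z}_2$, pins down $Z(G)\cong\mathbb{Z}_2$, and finally excludes $D_8$ and $Q_8$ by hand. You instead use only the covering fact already recorded at the head of Section~3 (a maximum clique yields the complete list of maximal cyclic subgroups, exactly as in the proof of Lemma~\ref{G>G/N}), so that $H=C_1\cup C_2\cup C_3$ is an irredundant cover by three proper subgroups with $\bigcap_i C_i=Cyc(H)=1$, and then quote Scorza's theorem to conclude $H\cong\mathbb{Z}_2\oplus\mathbb{Z}_2$ in one stroke. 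Your identification $\bigcap_i C_i=Cyc(H)$ and the irredundancy check are both sound. What each approach buys: yours is shorter, treats the abelian and non-abelian cases uniformly, and needs no external classification beyond the classical Scorza result; the paper's approach is the one that scales to the next case, since the same non-commuting-graph reduction (with the $\omega(\mathcal{A}_G)=4$ classification) is what drives Theorem~\ref{2.7}, whereas a Scorza-type analysis of covers by four or five cyclic subgroups would be considerably messier. One caveat: your second, Scorza-free sketch is not yet a proof --- the final step (excluding elements of order $\geq 4$ from a $2$-group with exactly three involutions, all squares central, and trivial cyclicizer) is exactly the part you wave at with ``this leads to a contradiction,'' and it is where all the work of the paper's case analysis lives; as written it should be regarded as a plan, not an argument. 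Since the Scorza route is complete, this does not affect the validity of your proposal.
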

 \begin{proof} Suppose that
$\omega(\mathcal{C}_G)=3$. By Lemma \ref{w-cyc}, we may assume
that $Cyc(G)=1$. Also it follows from Lemma \ref{2.3}, that $G$
is a $2$-group, and by Lemma \ref{2.4}, we may assume that $G$ is
a non-abelian group. Now since $\omega(\mathcal{C}_G)\geq
\omega(\mathcal{A}_G)$, we have $\omega(\mathcal{A}_G)=3$. Thus by a
well-known result (see e.g., \cite[Lemma 2.4-(3)]{AJM} and
\cite[Theorem 2]{BS}), we have $G/Z(G)\cong \mathbb{Z}_2\oplus
\mathbb{Z}_2$. Since $G$ is nilpotent,  $Z(G)\neq Cyc(G)=1$. Then
there exists an element $a\in G$ such that $H=\langle a,
Z(G)\rangle$ is not cyclic. Now since $H$ is abelian and since
$\omega(\mathcal{C}_H)\leq w(\mathcal{C}_G)=3$, it follows from
Lemma \ref{2.4} that  $H\cong \mathbb{Z}_2\oplus \mathbb{Z}_2$.
Thus $Z(G)\cong \mathbb{Z}_2$ or $\mathbb{Z}_2\oplus
\mathbb{Z}_2$. If $Z(G)\cong \mathbb{Z}_2\oplus \mathbb{Z}_2$,
then taking $a'\in G\setminus Z(G)$, again $\langle
a',Z(G)\rangle$ is abelian, and by a similar argument, it is
isomorphic to $\mathbb{Z}_2\oplus \mathbb{Z}_2$, which is a
contradiction. Thus $Z(G)\cong \mathbb{Z}_2$, and so $|G|=8$.
Therefore $G\cong D_8$ or $Q_8$, but $Cyc(Q_8)\neq 1$. Thus
$G\cong D_8$, another contradiction, as
$\omega(\mathcal{C}_{D_8})=5$; since if $D_8=\langle a,b\mid
a^4=b^2=1, bab=a^{-1}\rangle$, then $\{a, b, ab, a^2b,a^3b\}$ is a
clique in $\mathcal{C}_{D_8}$. \\ The converse is clear.
\end{proof}
\begin{lem}\label{ele}
Let $G$ be a  group of size $p^n$ and exponent $p$, where $p$ is a
prime number and $n>1$ is an integer. Then
$\omega(\mathcal{C}_G)=\frac{p^n-1}{p-1}$.
\end{lem}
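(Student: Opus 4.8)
The plan is to show that the maximal cliques of $\mathcal{C}_G$ are precisely the transversals of the family of subgroups of order $p$ in $G$. First I would record the elementary consequences of the hypotheses. Since $G$ has exponent $p$, every non-trivial cyclic subgroup of $G$ has order $p$; and since $|G|=p^n>p$, for every $x\in G\setminus\{1\}$ there is some $y\notin\langle x\rangle$, so $\langle x,y\rangle$ properly contains $\langle x\rangle$, has order $\geq p^2$ but exponent $p$, and is therefore non-cyclic. Hence $Cyc(G)=1$ and the vertex set of $\mathcal{C}_G$ is $G\setminus\{1\}$, of size $p^n-1$. Moreover each non-identity element generates a unique subgroup of order $p$, so the $p^n-1$ non-identity elements are partitioned among the $\nu_p(G)$ subgroups of order $p$, each contributing exactly its $p-1$ generators; thus $\nu_p(G)=\frac{p^n-1}{p-1}$.

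For the lower bound I would invoke Lemma \ref{2.2} (equivalently, choose one generator from each subgroup of order $p$): this set of $\nu_p(G)$ elements is a clique, because if $\langle x\rangle\neq\langle y\rangle$ are distinct subgroups of order $p$ then $\langle x,y\rangle$ contains both, so it has order $\geq p^2$ and exponent $p$, hence is non-cyclic, i.e. $x$ and $y$ are adjacent in $\mathcal{C}_G$. This gives $\omega(\mathcal{C}_G)\geq\nu_p(G)=\frac{p^n-1}{p-1}$.

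For the upper bound, let $X$ be any clique in $\mathcal{C}_G$. If $x,y\in X$ were distinct generators of the same subgroup of order $p$, then $\langle x,y\rangle=\langle x\rangle$ would be cyclic, so $x$ and $y$ would fail to be adjacent, contradicting that $X$ is a clique. Hence $X$ meets each of the $\nu_p(G)$ subgroups of order $p$ in at most one element, so $|X|\leq\nu_p(G)$. Combining the two bounds yields $\omega(\mathcal{C}_G)=\nu_p(G)=\frac{p^n-1}{p-1}$, as claimed.

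Everything reduces to the single structural observation that in a group of exponent $p$ each non-identity element lies in a unique subgroup of order $p$, so there is no real obstacle; the only point requiring a little care is verifying that generators of \emph{distinct} order-$p$ subgroups give \emph{adjacent} (not merely distinct) vertices, which is exactly where the exponent-$p$ hypothesis is used, since it forces every subgroup of order $\geq p^2$ to be non-cyclic.
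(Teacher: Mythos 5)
Your proof is correct and follows essentially the same route as the paper: the paper's entire argument is the observation that in a group of exponent $p$ two elements generate a non-cyclic subgroup if and only if they generate distinct cyclic subgroups, so $\omega(\mathcal{C}_G)$ equals the number of subgroups of order $p$, namely $\frac{p^n-1}{p-1}$. You have simply spelled out the same idea in more detail, including the (correct, and implicitly assumed in the paper) verification that $Cyc(G)=1$.
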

\begin{proof}
For $x,y\in G$, $\langle x,y\rangle$ is not cyclic if and only if
$\langle x\rangle\not=\langle y \rangle$. Thus
$\omega(\mathcal{C}_G)$ is equal to the number of subgroups of $G$
of order $p$. This completes the proof.
\end{proof}
\begin{lem}\label{cl-cl}
Let $G$ and $H$ be two finite non-cyclic groups such that
$\gcd(|G|,|H|)=1$. If $\omega(\mathcal{C}_G)=n$ and
$\omega(\mathcal{C}_H)=m$ are finite, then
$\omega(\mathcal{C}_{G\times H})\geq nm$.
\end{lem}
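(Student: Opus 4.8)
The plan is to build a large clique in $\mathcal{C}_{G\times H}$ out of maximum cliques of $\mathcal{C}_G$ and $\mathcal{C}_H$. Fix a clique $\{x_1,\dots,x_n\}$ of $\mathcal{C}_G$ with $n=\omega(\mathcal{C}_G)$ and a clique $\{y_1,\dots,y_m\}$ of $\mathcal{C}_H$ with $m=\omega(\mathcal{C}_H)$; by definition these consist of distinct elements which pairwise generate non-cyclic subgroups. I claim the set $S=\{(x_i,y_j)\mid 1\le i\le n,\ 1\le j\le m\}\subseteq G\times H$, which has exactly $nm$ elements, is a clique in $\mathcal{C}_{G\times H}$; this immediately gives $\omega(\mathcal{C}_{G\times H})\ge nm$. (One need not check which elements lie in $Cyc(G\times H)$, since any element appearing in a clique of the non-cyclic graph automatically lies outside the cyclicizer.)

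The key point is the following description of $2$-generated subgroups of $G\times H$ in the coprime case: for all $a,c\in G$ and $b,d\in H$ one has $\langle (a,b),(c,d)\rangle=\langle a,c\rangle\times\langle b,d\rangle$. The inclusion $\subseteq$ is obvious. For $\supseteq$, note that $b^{|H|}=1$ and, since $|a|$ divides $|G|$, we have $\gcd(|a|,|H|)=1$; hence $(a,b)^{|H|}=(a^{|H|},1)$ with $a^{|H|}$ a generator of $\langle a\rangle$, so $(a,1)\in\langle (a,b),(c,d)\rangle$, and likewise $(c,1),(1,b),(1,d)$ all lie in this subgroup. Thus $\langle a,c\rangle\times\{1\}$ and $\{1\}\times\langle b,d\rangle$ are both contained in it, which proves $\supseteq$. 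Consequently, since a direct product of two finite groups of coprime orders is cyclic precisely when both factors are cyclic, $\langle (a,b),(c,d)\rangle$ is cyclic if and only if both $\langle a,c\rangle$ and $\langle b,d\rangle$ are cyclic.

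Now take two distinct elements $(x_i,y_j)$ and $(x_k,y_l)$ of $S$, so $i\ne k$ or $j\ne l$. If $i\ne k$ then $x_i\ne x_k$ and $\langle x_i,x_k\rangle$ is non-cyclic because $x_i,x_k$ are adjacent in $\mathcal{C}_G$, so by the criterion above $\langle (x_i,y_j),(x_k,y_l)\rangle$ is non-cyclic; the case $j\ne l$ is symmetric. Hence any two distinct elements of $S$ are adjacent in $\mathcal{C}_{G\times H}$, so $S$ is a clique of size $nm$ and $\omega(\mathcal{C}_{G\times H})\ge nm$. The only step requiring care is the coprime identity $\langle (a,b),(c,d)\rangle=\langle a,c\rangle\times\langle b,d\rangle$ — in particular the computation $(a,b)^{|H|}=(a^{|H|},1)$ with $a^{|H|}$ still generating $\langle a\rangle$ — and this is routine; everything else follows directly from the definition of the non-cyclic graph.
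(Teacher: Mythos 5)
Your proof is correct and follows essentially the same route as the paper, which also takes maximum cliques $\{g_1,\dots,g_n\}$ and $\{h_1,\dots,h_m\}$ and declares the product set $\{(g_i,h_j)\}$ to be a clique, leaving the verification as "easy to see." You have simply supplied the details the paper omits, namely the coprime identity $\langle (a,b),(c,d)\rangle=\langle a,c\rangle\times\langle b,d\rangle$ and the fact that a direct product of coprime-order groups is cyclic exactly when both factors are.
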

\begin{proof}
Let $\{g_1,\dots,g_n\}$ and $\{h_1,\dots,h_m\}$ be two clique sets
in $G$ and $H$, respectively. Now it is easy to see that the set
$$\{(g_i,h_j) \;|\; i\in\{1,\dots,n\}, \; j\in\{1,\dots,m\}\}$$
is a clique  in $\mathcal{C}_{G\times H}$. This completes the
proof.
\end{proof}
\begin{lem} \label{2.6} Let $G$ be an abelian group such that
 $\omega(\mathcal{C}_G)=4$. Then $G/Cyc(G)\cong \mathbb{Z}_3\oplus
 \mathbb{Z}_3$.
\end{lem}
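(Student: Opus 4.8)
The plan is to follow the proof of Lemma~\ref{2.4} with the value $4$ in place of $3$. By Lemma~\ref{w-cyc} it is enough to treat $\overline{G}=G/Cyc(G)$, which is abelian, finite (Theorem~4.2 of \cite{AM}), satisfies $Cyc(\overline{G})=1$, and has $\omega(\mathcal{C}_{\overline{G}})=4$; so one must show that a finite abelian group $H$ with trivial cyclicizer and $\omega(\mathcal{C}_H)=4$ is isomorphic to $\mathbb{Z}_3\oplus\mathbb{Z}_3$. For any prime $p\ge 5$ we have $p^{0}<4\le p^{1}$, so Lemma~\ref{2.3}(2) gives that $H$ has no nontrivial $p$-element; hence $H=P\times Q$ with $P$ a $2$-group and $Q$ a $3$-group. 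Neither factor can be nontrivial cyclic: if, say, $P=\langle c\rangle$ with $c\ne 1$, then for every $y\in H$, writing $y$ through its Sylow components reduces $\langle c,y\rangle$ to the internal direct product of two cyclic groups of coprime orders, which is cyclic, so $c\in Cyc(H)=1$, a contradiction. Therefore, if both $P$ and $Q$ were nontrivial they would both be non-cyclic, and Lemmas~\ref{cl-cl} and~\ref{w>2} would force $4=\omega(\mathcal{C}_H)\ge\omega(\mathcal{C}_P)\,\omega(\mathcal{C}_Q)\ge 9$. Hence $H$ is a nontrivial non-cyclic $3$-group or a nontrivial non-cyclic $2$-group.

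Assume $H$ is a $3$-group. Lemma~\ref{2.2} gives $\nu_3(H)\le 4$, and since $H$ is non-cyclic abelian, $\nu_3(H)\ne 1$; as $\nu_3(H)\equiv 1\pmod 3$ this forces $\nu_3(H)=4$, so the $3$-rank of $H$ is exactly $2$ and $H\cong\mathbb{Z}_{3^{a}}\oplus\mathbb{Z}_{3^{b}}$ with $1\le a\le b$. For any non-cyclic subgroup $K\le H$, every clique of $\mathcal{C}_K$ is also a clique of $\mathcal{C}_H$ (a vertex of $\mathcal{C}_K$ lies outside $Cyc(H)$, and being non-cyclic is intrinsic to a subgroup), so $\omega(\mathcal{C}_K)\le\omega(\mathcal{C}_H)=4$. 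If $b\ge 2$ then $K=\mathbb{Z}_3\oplus\mathbb{Z}_9\le H$ violates this, because counting its maximal cyclic subgroups --- which by Theorem~4.7 of \cite{AM} equals $\omega(\mathcal{C}_{\mathbb{Z}_3\oplus\mathbb{Z}_9})$ --- yields a value exceeding $4$. Hence $a=b=1$ and $H\cong\mathbb{Z}_3\oplus\mathbb{Z}_3$, which indeed satisfies $\omega(\mathcal{C}_H)=\tfrac{3^2-1}{3-1}=4$ by Lemma~\ref{ele}.

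It remains to rule out the case where $H$ is a non-cyclic abelian $2$-group, and I expect this to be the main obstacle. By Lemma~\ref{2.2}, $\nu_2(H)\le 4$ and $\nu_2(H)\equiv 1\pmod 2$, so $\nu_2(H)\in\{1,3\}$; non-cyclicity forces $\nu_2(H)=3$ and hence $H$ has $2$-rank $2$, say $H\cong\mathbb{Z}_{2^{a}}\oplus\mathbb{Z}_{2^{b}}$ with $1\le a\le b$. Here $a=b=1$ is impossible since $\omega(\mathcal{C}_{\mathbb{Z}_2\oplus\mathbb{Z}_2})=3\ne 4$ (Lemma~\ref{ele}), and $a\ge 2$ or $b\ge 3$ is impossible by the subgroup-monotonicity argument of the previous paragraph combined with the elementary counts $\omega(\mathcal{C}_{\mathbb{Z}_4\oplus\mathbb{Z}_4})>4$ and $\omega(\mathcal{C}_{\mathbb{Z}_2\oplus\mathbb{Z}_8})>4$. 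The only surviving candidate is $H\cong\mathbb{Z}_2\oplus\mathbb{Z}_4$, so the lemma comes down to checking that $\omega(\mathcal{C}_{\mathbb{Z}_2\oplus\mathbb{Z}_4})\ne 4$ --- equivalently, to a careful enumeration of the maximal cyclic subgroups of $\mathbb{Z}_2\oplus\mathbb{Z}_4$ via Theorem~4.7 of \cite{AM}. This bookkeeping step, not the structural reductions, is the crux; should it instead produce the value $4$, the statement would need the additional alternative $G/Cyc(G)\cong\mathbb{Z}_2\oplus\mathbb{Z}_4$.
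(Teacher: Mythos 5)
Your structural reductions are all correct and essentially refine the paper's own route: the paper also uses Lemma~\ref{2.3} to cut $H=G/Cyc(G)$ down to an abelian $\{2,3\}$-group and then finishes with Lemmas~\ref{ele} and~\ref{cl-cl}. The difference is that the paper asserts at the outset that the Sylow $p$-subgroups of $H$ have ``exponent $p$,'' which makes the endgame immediate, whereas you rightly refuse to assume this: for $\omega(\mathcal{C}_H)=4$ and $p\in\{2,3\}$ one has $k=2$ in Lemma~\ref{2.3}, so that lemma only yields $x^{p}\in Z(H)$ (vacuous for abelian $H$) and that no Sylow $p$-subgroup is cyclic of order greater than $p$; neither statement gives exponent $p$. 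Your subgroup-monotonicity argument disposes of the $3$-group case completely and correctly isolates $H\cong\mathbb{Z}_2\oplus\mathbb{Z}_4$ as the one configuration that Lemmas~\ref{2.2}, \ref{ele}, \ref{cl-cl} cannot exclude.

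The step you leave open, however, cannot be closed, and the fallback you mention at the end is the true state of affairs. The group $\mathbb{Z}_2\oplus\mathbb{Z}_4$ has exactly four maximal cyclic subgroups, namely $\langle(0,1)\rangle$, $\langle(1,1)\rangle$, $\langle(1,0)\rangle$ and $\langle(1,2)\rangle$; their intersection is trivial, so $Cyc(\mathbb{Z}_2\oplus\mathbb{Z}_4)=1$ (this is the identity $Cyc(G)=\bigcap_i C_i$ used in the proof of Lemma~\ref{G>G/N}). Taking one generator of each gives the set $\{(0,1),(1,1),(1,0),(1,2)\}$, which is a clique: the first five pairs generate the whole group and the pair $(1,0),(1,2)$ generates a Klein four group. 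Since a clique meets each maximal cyclic subgroup in at most one vertex, $\omega(\mathcal{C}_{\mathbb{Z}_2\oplus\mathbb{Z}_4})=4$. Hence the lemma as printed is false: the alternative $G/Cyc(G)\cong\mathbb{Z}_2\oplus\mathbb{Z}_4$ must be added to the conclusion (and the correction propagates to Theorem~\ref{2.7}). The unfinished step in your write-up is therefore not an oversight on your part but the exact location of the error in the paper's proof, namely the unjustified exponent-$p$ claim attributed to Lemma~\ref{2.3}.
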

 \begin{proof}  By Lemmas \ref{w-cyc} and \ref{2.3}
and Lemma 2.3-(2) of \cite{AM}, $H=G/Cyc(G)$ is an abelian $\{2,
3\}$-group with Sylow $p$-subgroups of exponent $p$ and
$\omega(\mathcal{C}_H)=4$ and $Cyc(H)=1$. Thus
$$H\cong \underbrace{\mathbb{Z}_2 \oplus \cdots \oplus
\mathbb{Z}_{2}}_k \oplus \underbrace{\mathbb{Z}_{3} \oplus \cdots
\oplus \mathbb{Z}_{3}}_\ell.$$ Since $\omega(\mathcal{C}_H)=4$ and
$Cyc(H)=1$, it follows from Lemmas \ref{ele} and \ref{cl-cl} that
$k=0$ and $\ell=2$, that is,
 $H\cong \mathbb{Z}_3 \oplus \mathbb{Z}_3$, as required.
\end{proof}
\begin{lem}\label{27}
There is no  non-cyclic group $G$ of order $27$ with
$\omega(\mathcal{C}_G)=4$.
\end{lem}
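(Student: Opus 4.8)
\emph{Proof proposal.} The plan is to go through the five isomorphism types of groups of order $27$ and check the clique number in each case. Up to isomorphism the groups of order $27$ are $\mathbb{Z}_{27}$, $\mathbb{Z}_9\oplus\mathbb{Z}_3$, $\mathbb{Z}_3\oplus\mathbb{Z}_3\oplus\mathbb{Z}_3$, the Heisenberg group $E=\langle a,b,c\mid a^3=b^3=c^3=1,\ [a,b]=c,\ [a,c]=[b,c]=1\rangle$ of exponent $3$, and the modular group $M=\langle a,b\mid a^9=b^3=1,\ b^{-1}ab=a^4\rangle$ of exponent $9$. Since $\mathbb{Z}_{27}$ is cyclic, it suffices to show $\omega(\mathcal{C}_G)\neq 4$ for each of the other four.

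The two groups of exponent $3$ are immediate: both $\mathbb{Z}_3\oplus\mathbb{Z}_3\oplus\mathbb{Z}_3$ and $E$ have order $3^3$ and exponent $3$, so Lemma~\ref{ele} gives $\omega(\mathcal{C}_G)=\frac{3^3-1}{3-1}=13\neq 4$.

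For the remaining two groups I would apply Lemma~\ref{G>G/N}. In $A:=\mathbb{Z}_9\oplus\mathbb{Z}_3$ (with $\langle a\rangle$ the direct factor of order $9$) and in $M$, set $N=\langle a^3\rangle$; in each case $N$ is a central subgroup of order $3$ with $A/N\cong\mathbb{Z}_3\oplus\mathbb{Z}_3$, respectively $M/N=M/Z(M)\cong\mathbb{Z}_3\oplus\mathbb{Z}_3$, and both quotients are (finite and) non locally cyclic. By Lemma~\ref{ele}, $\omega(\mathcal{C}_{G/N})=\omega(\mathcal{C}_{\mathbb{Z}_3\oplus\mathbb{Z}_3})=\frac{3^2-1}{3-1}=4$. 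Next I would observe that $N\not\le Cyc(G)$: since $a^3$ is central of order $3$ and $b\notin\langle a^3\rangle$, the subgroup $\langle a^3,b\rangle$ is isomorphic to $\mathbb{Z}_3\oplus\mathbb{Z}_3$ and hence not cyclic, so $a^3\notin Cyc(G)$. As $G$ is finite, $\omega(\mathcal{C}_G)$ is finite, so Lemma~\ref{G>G/N} applies and yields $\omega(\mathcal{C}_{G/N})\le\omega(\mathcal{C}_G)$ with equality if and only if $N\le Cyc(G)$; since the latter fails, $\omega(\mathcal{C}_G)\ge 5\neq 4$. This disposes of $A$ and $M$ and completes the proof.

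There is no real obstacle here; the only point requiring (minor) care is the verification that $N\not\le Cyc(G)$ in the last two cases, which reduces to exhibiting the non-cyclic subgroup $\langle a^3,b\rangle\cong\mathbb{Z}_3\oplus\mathbb{Z}_3$ in each. Alternative routes: for the abelian group $A$ one may instead quote Lemma~\ref{2.6}, since $\omega(\mathcal{C}_A)=4$ would force $A/Cyc(A)\cong\mathbb{Z}_3\oplus\mathbb{Z}_3$, which is impossible once $Cyc(A)=1$ is checked; and for $M$ one may instead count maximal cyclic subgroups directly (three of order $9$, each containing $Z(M)$, together with the three subgroups of order $3$ of $\Omega_1(M)\cong\mathbb{Z}_3\oplus\mathbb{Z}_3$ other than $Z(M)$), obtaining $\omega(\mathcal{C}_M)=6$ via Theorem~4.7 of \cite{AM}.
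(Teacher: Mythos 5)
Your proof is correct, but the route differs from the paper's for the decisive cases. You agree with the paper on the exponent-$3$ groups (both invoke Lemma~\ref{ele} to get clique number $13$). The paper then uses Lemma~\ref{2.6} to rule out the abelian groups and is left only with the modular group $\langle c,d\mid c^9=d^3=1,\ d^{-1}cd=c^4\rangle$, which it kills by exhibiting the explicit $5$-clique $\{c,d,cd,c^{-1}d,cd^{-1}\}$. You instead handle $\mathbb{Z}_9\oplus\mathbb{Z}_3$ and the modular group uniformly: quotient by the central subgroup $N=\langle a^3\rangle$ to land on $\mathbb{Z}_3\oplus\mathbb{Z}_3$ with clique number $4$, and then use the equality clause of Lemma~\ref{G>G/N} -- equality would force $N\le Cyc(G)$, which fails because $\langle a^3,b\rangle\cong\mathbb{Z}_3\oplus\mathbb{Z}_3$ is not cyclic -- to conclude $\omega(\mathcal{C}_G)\ge 5$. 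Your verifications ($a^3$ central, $b\notin\langle a^3\rangle$, the quotients elementary abelian of order $9$, finiteness hypotheses of Lemma~\ref{G>G/N}) all check out. The paper's argument is more hands-on and self-contained at this step; yours is slightly more conceptual, avoids having to guess a clique, and treats the two exponent-$9$ groups by one mechanism, at the cost of leaning on the harder Lemma~\ref{G>G/N}. Both are valid.
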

\begin{proof}
Suppose, for a contradiction, that $G$ is a group of order 27
with $\omega(\mathcal{C}_G)=4$.  It follows from Lemmas \ref{2.6}
and \ref{ele} that $G$ is a non-abelian group of exponent $9$.
Therefore
\begin{eqnarray*} G\cong \langle
c, d\mid c^9= d^3=1, d^{-1}cd=c^4\rangle.
\end{eqnarray*}
Now the set $\{c, d, cd, c^{-1}d, cd^{-1}\}$ is a clique.  This
completes the proof.
\end{proof}
\begin{thm} \label{2.7} Let $G$ be a non locally cyclic group. Then
$\omega(\mathcal{C}_G)=4$ if and only if $G/Cyc(G)\cong
\mathbb{Z}_3\oplus \mathbb{Z}_3$ or $S_3$.
\end{thm}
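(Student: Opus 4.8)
The plan is to prove both implications, reducing the forward one to finite groups with trivial cyclicizer. For the converse: if $G/Cyc(G)\cong\mathbb{Z}_3\oplus\mathbb{Z}_3$, then Lemma~\ref{ele} gives $\omega(\mathcal{C}_{G/Cyc(G)})=\frac{3^2-1}{3-1}=4$; if $G/Cyc(G)\cong S_3$, then $S_3$ has exactly four maximal cyclic subgroups (its three subgroups of order $2$ and its subgroup of order $3$), with trivial intersection, so $Cyc(S_3)=1$ and $\omega(\mathcal{C}_{S_3})=4$ by Theorem~4.7 of \cite{AM}; in either case $\omega(\mathcal{C}_G)=4$ by Lemma~\ref{w-cyc}.

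For the forward direction I would first pass to $G/Cyc(G)$ via Lemma~\ref{w-cyc}; since $Cyc\big(G/Cyc(G)\big)=1$, if this quotient is abelian we are done by Lemma~\ref{2.6} (it is $\mathbb{Z}_3\oplus\mathbb{Z}_3$). So assume $G$ is finite, non-abelian, and $Cyc(G)=1$, and aim to show $G\cong S_3$. Applying Lemma~\ref{2.3}(2) to each prime $p\ge 5$ shows $|G|$ is a $\{2,3\}$-number, and Lemma~\ref{2.2} gives $\nu_2(G)\le 4$ and $\nu_3(G)\le 4$, hence (since $\nu_p(G)\equiv 1\pmod p$) $\nu_2(G)\in\{1,3\}$ and $\nu_3(G)\in\{1,4\}$. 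Because $\omega(\mathcal{C}_G)\ge\omega(\mathcal{A}_G)$ and $\omega(\mathcal{A}_G)\ge 3$ for a non-abelian group, I would split into the cases $\omega(\mathcal{A}_G)=3$ and $\omega(\mathcal{A}_G)=4$.

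If $\omega(\mathcal{A}_G)=3$, the result cited in the proof of Theorem~\ref{2.5} gives $G/Z(G)\cong\mathbb{Z}_2\oplus\mathbb{Z}_2$, so $G$ is nilpotent of class $2$ and $G=P\times A$ with $P$ a Sylow $2$-subgroup and $A$ its central Sylow $3$-subgroup. Since $Cyc(P\times A)=Cyc(P)\times Cyc(A)=1$, a nontrivial $A$ would be non-cyclic with $\nu_3(A)\ge 4$, so $\omega(\mathcal{C}_A)\ge 4$ and then $\omega(\mathcal{C}_G)\ge 3\cdot 4>4$ by Lemma~\ref{cl-cl}; hence $A=1$ and $G$ is a non-abelian $2$-group. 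Then, as in the proof of Theorem~\ref{2.5} but using Lemma~\ref{2.6} to exclude $\omega(\mathcal{C}_H)=4$ for the abelian $2$-subgroups $H=\langle a,Z(G)\rangle$, one is forced to $Z(G)\cong\mathbb{Z}_2$, so $|G|=8$ and $G\cong D_8$ or $Q_8$ — impossible since $Cyc(Q_8)\ne 1$ and $\omega(\mathcal{C}_{D_8})=5$. If $\omega(\mathcal{A}_G)=4$ and $\{x_1,\dots,x_4\}$ is a maximal set of pairwise non-commuting elements, then $G=C_G(x_1)\cup\cdots\cup C_G(x_4)$ is a union of four proper subgroups, and by the previous case not a union of three; by a classical result on coverings of a group by proper subgroups, $G$ then has a homomorphic image $G/N$ isomorphic to $\mathbb{Z}_3\oplus\mathbb{Z}_3$ or $S_3$. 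Since $\omega(\mathcal{C}_{\mathbb{Z}_3\oplus\mathbb{Z}_3})=\omega(\mathcal{C}_{S_3})=4=\omega(\mathcal{C}_G)$, Lemma~\ref{G>G/N} forces $N\le Cyc(G)=1$, so $G\cong\mathbb{Z}_3\oplus\mathbb{Z}_3$ or $S_3$, and being non-abelian $G\cong S_3$.

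I expect the non-abelian case to be the main obstacle, in two respects: (i) pushing the $\omega(\mathcal{A}_G)=3$ analysis to a contradiction, which leans on the structure of groups with $G/Z(G)\cong\mathbb{Z}_2\oplus\mathbb{Z}_2$ together with the abelian classifications of Lemmas~\ref{2.4} and \ref{2.6}; and (ii) extracting the quotients $\mathbb{Z}_3\oplus\mathbb{Z}_3$ and $S_3$ from a minimal covering of $G$ by four proper subgroups. A more self-contained route for (ii) is to use that $\omega(\mathcal{C}_G)$ equals the number of maximal cyclic subgroups of $G$, so $G=M_1\cup M_2\cup M_3\cup M_4$ with each $M_i$ cyclic and $\bigcap_i M_i=1$, and to analyse this configuration directly from the constraints $|G|=2^a3^b$, $\nu_2(G)\in\{1,3\}$, $\nu_3(G)\in\{1,4\}$ and Lemma~\ref{2.3}.
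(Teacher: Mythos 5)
Your converse, your reduction to a finite non-abelian $\{2,3\}$-group with trivial cyclicizer, and your treatment of the case $\omega(\mathcal{A}_G)=3$ (including the extra step of killing the central Sylow $3$-subgroup via Lemma~\ref{cl-cl}, which the paper leaves implicit in its ``argument similar to Theorem~\ref{2.5}'') are all sound. The genuine gap is in the case $\omega(\mathcal{A}_G)=4$, at the step ``and by the previous case not a union of three.'' What the previous case rules out is $\omega(\mathcal{A}_G)=3$, i.e.\ the existence of a maximal set of three pairwise non-commuting elements; this says nothing about whether $G$ can be covered by three proper subgroups, since a covering by three proper subgroups need not consist of centralizers. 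By Scorza's theorem, $\sigma(G)=3$ is equivalent to $G$ having a quotient isomorphic to $\mathbb{Z}_2\oplus\mathbb{Z}_2$, and nothing you have established excludes this: Lemma~\ref{G>G/N} applied to such a quotient only gives $3=\omega(\mathcal{C}_{G/N})<4=\omega(\mathcal{C}_G)$, hence $N\neq 1$, which is no contradiction. Without $\sigma(G)\neq 3$, the Greco--Cohn classification you invoke only yields a quotient isomorphic to one of $\mathbb{Z}_2\oplus\mathbb{Z}_2$, $\mathbb{Z}_3\oplus\mathbb{Z}_3$, $S_3$, and the first possibility derails the final step. Your closing suggestion to ``analyse this configuration directly'' is a sketch, not a proof, so it does not repair this.

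The fix is essentially to do what the paper does at this juncture: from $\omega(\mathcal{A}_G)=4$ cite the centralizer-counting result (the analogue, for five centralizers, of the one you already used for $\omega(\mathcal{A}_G)=3$) to get $G/Z(G)\cong\mathbb{Z}_3\oplus\mathbb{Z}_3$ or $S_3$. At that point your concluding move does work and is in fact cleaner than the paper's: since $\omega(\mathcal{C}_{G/Z(G)})=4=\omega(\mathcal{C}_G)$, Lemma~\ref{G>G/N} forces $Z(G)\leq Cyc(G)=1$, so $G\cong S_3$ ($\mathbb{Z}_3\oplus\mathbb{Z}_3$ being excluded as $G$ is non-abelian). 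The paper instead pins down $Z(G)\in\{\mathbb{Z}_2,\mathbb{Z}_3\}$ and eliminates the resulting groups of orders $12$ and $18$ by exhibiting explicit $5$-cliques; your application of Lemma~\ref{G>G/N} would let one skip that case analysis entirely.
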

\begin{proof} Suppose that $\omega(\mathcal{C}_G)=4$. Then by Lemma \ref{w-cyc} we
may assume that $Cyc(G)=1$. Also it follows from Lemma \ref{2.3}
that $G$ is a $\{2, 3\}$-group. By Lemma \ref{2.6}, we may assume
that $G$ is non-abelian. Now since $4=w(\mathcal{C}_G)\geq
w(\mathcal{A}_G)$, we have
$\omega(\mathcal{A}_G)=3$ or 4.\\
If $\omega(\mathcal{A}_G)=3$, then  $\frac{G}{Z(G)}\cong
\mathbb{Z}_2\oplus \mathbb{Z}_2$. In this case by an argument
similar to the proof of Theorem \ref{2.5} we obtain a
contradiction. So $\omega(\mathcal{A}_G)=4$, and by \cite[Lemma
2.4-(4)]{AJM} and \cite[Theorem 5]{BS}, we have  $G/Z(G)\cong
\mathbb{Z}_3\oplus \mathbb{Z}_3$ or $S_3$. By Lemma \ref{27},
Sylow 3-subgroups of $G$ are of order 3 or 9. By Lemma
\ref{G>G/N},   $Z(G)$ is cyclic. Therefore  $Z(G)\cong
\mathbb{Z}_2$ or $\mathbb{Z}_3$.\\
1) \; Let $G/Z(G)\cong \mathbb{Z}_3\oplus \mathbb{Z}_3$. Then
$\gcd(|G/Z(G)|,|Z(G)|)=1$, as  Sylow subgroups of $G$ are of order
at most 9. Thus   $Z(G)=Cyc(G)=1$ and so
$G$ is abelian, a contradiction. \\
2) \; Let $\frac{G}{Z(G)}\cong S_3$.  \\
(I) \; If $Z(G)\cong \mathbb{Z}_3$, then $|G|=18$, and since $G$
is not abelian $G\cong D_{18}$, $\langle c,d,e\mid c^3=d^3=e^2=1,
cd=dc, c^e=c^{-1}, d^e=d^{-1}\rangle$, or $\mathbb{Z}_3\times
S_3$. The first two have trivial centers, so $G\cong
\mathbb{Z}_3\times S_3$. But if $\mathbb{Z}_3=\langle z\rangle$
and $S_3=\langle a,b\mid a^3=b^2=1, a^b=a^{-1}\rangle$, then $\{a,
b, ab, a^2b, za\}$ is a clique, which is a contradiction.\\
 (II) \; Let $Z(G)\cong \mathbb{Z}_2$. Then $|G|=12$, and since $G$
 is non-abelian $G\cong A_4$, $D_{12}$ or $\langle a, b\mid a^6=1,
b^2=a^3, bab^{-1}=a^{-1}\rangle$. But $G\ncong A_4$, since
$Z(A_4)=1$. If $G\cong D_{12}= \langle c, d\mid c^6=1=d^2,
dcd^{-1}=c^{-1}\rangle$, then $\{c, d, cd, c^2d, c^3d\}$ is a
clique, a contradiction. If $G\cong \langle a, b\mid a^6=1,
b^2=a^3, bab^{-1}=a^{-1}\rangle$, then $\{a, b, ab, a^{-1}b, a^2b\}$
is a clique, a contradiction.\\
This completes the proof.
\end{proof}
\section{Planar and Hamiltonian non-cyclic graphs}
We were unable to decide  whether the non-cyclic graph of a
finite group is Hamiltonian or not. On the other hand, since the
non-cyclic graph of a finite group is so rich in edges, it is
hard to believe it is not Hamiltonian.\\
The following result  reduces the verification of being
Hamiltonian of the non-cyclic graph of a finite group $G$ to that
of the graph $\mathcal{C}_{G/Cyc(G)}$.
\begin{lem}
Let $G$ be a finite non-cyclic group such that
$\mathcal{C}_{G/Cyc(G)}$  is Hamiltonian. Then $\mathcal{C}_G$ is
also Hamiltonian. \end{lem}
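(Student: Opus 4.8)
The plan is to realise $\mathcal{C}_G$ as a ``blow-up'' of $\mathcal{C}_{G/Cyc(G)}$, in which every vertex is replaced by an independent set of size $|Cyc(G)|$, and then to observe that such a blow-up of any Hamiltonian graph is again Hamiltonian: one simply runs around the given cycle once for each element of the fibre. So the proof splits into a structural observation about the two graphs and an entirely elementary combinatorial construction.

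First I would record the structural picture. Put $C=Cyc(G)$, $c=|C|$ and $\bar G=G/C$, and write $\bar x$ for the image of $x\in G$. Since $Cyc(\bar G)=1$ by Lemma 2.3-(2) of \cite{AM}, the vertex set of $\mathcal{C}_{\bar G}$ is exactly $\bar G\setminus\{1\}$, and the fibre $\{x\in G:\bar x=v\}$ over each $v\in\bar G\setminus\{1\}$ is a coset of the normal subgroup $C$, hence consists of precisely $c$ elements, all of which lie in $V(\mathcal{C}_G)=G\setminus C$. Moreover, for $x,y\in G\setminus C$, if $\bar x-\bar y$ is an edge of $\mathcal{C}_{\bar G}$ then $\langle\bar x,\bar y\rangle$ is non-cyclic, and since it is a homomorphic image of $\langle x,y\rangle$, the subgroup $\langle x,y\rangle$ is non-cyclic too; that is, $x-y$ is an edge of $\mathcal{C}_G$. (This is one direction of the edge correspondence proved inside Lemma \ref{d=3}, and the only one needed here.)

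Next I would build the Hamiltonian cycle explicitly. Take a Hamiltonian cycle $v_1-v_2-\cdots-v_n-v_1$ of $\mathcal{C}_{\bar G}$, so that $\{v_1,\dots,v_n\}=\bar G\setminus\{1\}$ and $n\ge 3$, and for each $i$ list the $c$ elements of the fibre over $v_i$ as $x_{i,1},\dots,x_{i,c}$. Consider the cyclic sequence
$$x_{1,1},x_{2,1},\dots,x_{n,1},\;x_{1,2},x_{2,2},\dots,x_{n,2},\;\dots,\;x_{1,c},x_{2,c},\dots,x_{n,c}$$
with $x_{n,c}$ followed by $x_{1,1}$. It has length $nc=|V(\mathcal{C}_G)|$ and lists every vertex of $\mathcal{C}_G$ exactly once, because $G\setminus C$ is the disjoint union of the $n$ fibres. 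Each pair of consecutive terms is either $x_{i,k}\,x_{i+1,k}$ with $1\le i<n$, or $x_{n,k}\,x_{1,k+1}$ with $1\le k<c$, or the closing pair $x_{n,c}\,x_{1,1}$; in every case the two images under $x\mapsto\bar x$ form an edge of the Hamiltonian cycle of $\mathcal{C}_{\bar G}$ (namely $v_i-v_{i+1}$ or $v_n-v_1$), so by the observation above the two terms are adjacent in $\mathcal{C}_G$. Hence the sequence is a Hamiltonian cycle of $\mathcal{C}_G$.

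I do not expect any serious obstacle: the argument is essentially bookkeeping. The only points that require care are that $Cyc(\bar G)=1$ — which is what guarantees that the fibres all have the common size $c=|Cyc(G)|$ and that $\bar G\setminus\{1\}$ really is the whole vertex set of $\mathcal{C}_{\bar G}$ — and that we use only the easy implication ``edge in $\mathcal{C}_{\bar G}$ $\Rightarrow$ edge in $\mathcal{C}_G$'' from Lemma \ref{d=3}.
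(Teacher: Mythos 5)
Your proof is correct and follows essentially the same route as the paper's: both lift a Hamiltonian cycle of $\mathcal{C}_{G/Cyc(G)}$ to $\mathcal{C}_G$ by traversing the base cycle once for each element of $Cyc(G)$, using the implication that an edge downstairs gives an edge between any two elements of the corresponding cosets upstairs. Your version merely makes explicit the bookkeeping (equal fibre sizes via $Cyc(G/Cyc(G))=1$, and the case check on consecutive pairs) that the paper leaves implicit.
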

\begin{proof}
By hypothesis there exists a cycle
$$\bar{a_1}-\bar{a_2}-\cdots-\bar{a_n}-\bar{a_1} \eqno{(\bigstar)}$$ in $\mathcal{C}_{\frac{G}{Cyc(G)}}$ such that
$$\frac{G}{Cyc(G)}\backslash \frac{Cyc(G)}{Cyc(G)}=\{\bar{a_1},\bar{a_2},\dots,\bar{a_n}\}.$$ Let
$Cyc(G)=\{c_1,\dots,c_k\}$. By $(\bigstar)$, $a_{i}c_j$ is
adjacent to $a_{i+1}c_\ell$ in $\mathcal{C}_G$ for all
$i\in\{1,\dots,n\}$ and all $j,\ell\in\{1,\dots,k\}$, where
indices of $a$'s are computed modulo $n$. Thus
$$a_1c_1-\cdots-a_nc_1-a_1c_2-\cdots-a_nc_2-\cdots-a_1c_k-\cdots-a_nc_k-a_1c_1$$
is a Hamilton cycle in $\mathcal{C}_G$. This completes the proof.
\end{proof}
In the following result we give a large family of finite groups
with Hamiltonian non-cyclic graphs.
\begin{prop}\label{ham}
Let $G$ be a finite non-cyclic group such that $$|G|+|Cyc(G)|>
2|Cyc_G(x)|$$ for all $x\in Z(G)\backslash Cyc(G)$. Then
$\mathcal{C}_G$ is Hamiltonian. In particular, $\mathcal{C}_G$ is
Hamiltonian whenever $Z(G)=Cyc(G)$.
\end{prop}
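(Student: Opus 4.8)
The plan is to read off the vertex degrees of $\mathcal{C}_G$ and then invoke the classical theorem of Ore. Put $n=|V(\mathcal{C}_G)|=|G|-|Cyc(G)|$. First I would compute, for an arbitrary vertex $x\in G\setminus Cyc(G)$, the set of vertices \emph{not} adjacent to $x$: a vertex $y\neq x$ is non-adjacent to $x$ exactly when $\langle x,y\rangle$ is cyclic, i.e. when $y\in Cyc_G(x)$. Since $Cyc(G)\subseteq Cyc_G(x)$, the number of such $y$ lying in the vertex set (and excluding $x$ itself) is $|Cyc_G(x)|-|Cyc(G)|-1$, whence
$$d_{\mathcal{C}_G}(x)=(n-1)-\bigl(|Cyc_G(x)|-|Cyc(G)|-1\bigr)=|G|-|Cyc_G(x)|.$$

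Next I would record two elementary bounds on $|Cyc_G(x)|$. Because $\langle x,y\rangle$ cyclic forces $[x,y]=1$, we have $Cyc_G(x)\subseteq C_G(x)$, so $|Cyc_G(x)|\le |G|/2$ whenever $x\notin Z(G)$; and the hypothesis says precisely that $|Cyc_G(x)|<\frac12\bigl(|G|+|Cyc(G)|\bigr)$ whenever $x\in Z(G)\setminus Cyc(G)$. Now take distinct non-adjacent vertices $x,y$ and split into cases according to how many of them lie in $Z(G)$ (a vertex in $Z(G)$ automatically lies in $Z(G)\setminus Cyc(G)$). If both lie outside $Z(G)$, then $|Cyc_G(x)|+|Cyc_G(y)|\le|G|<|G|+|Cyc(G)|$, using $1\in Cyc(G)$. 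If exactly one, say $x$, lies in $Z(G)$, then $|Cyc_G(x)|+|Cyc_G(y)|<\frac12(|G|+|Cyc(G)|)+|G|/2\le|G|+|Cyc(G)|$. If both lie in $Z(G)$, adding the two strict hypothesis inequalities and halving gives $|Cyc_G(x)|+|Cyc_G(y)|<|G|+|Cyc(G)|$ directly. So in every case $|Cyc_G(x)|+|Cyc_G(y)|<|G|+|Cyc(G)|$, hence
$$d_{\mathcal{C}_G}(x)+d_{\mathcal{C}_G}(y)=2|G|-|Cyc_G(x)|-|Cyc_G(y)|>|G|-|Cyc(G)|=n.$$

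Finally, since $G$ is non-cyclic we have $\omega(\mathcal{C}_G)\ge3$ by Lemma \ref{w>2}, so $n\ge3$; Ore's theorem then yields that $\mathcal{C}_G$ is Hamiltonian. The last sentence is immediate: if $Z(G)=Cyc(G)$ then $Z(G)\setminus Cyc(G)=\varnothing$, so the displayed hypothesis holds vacuously and the conclusion follows from the first part. I do not expect a genuine obstacle here: the argument is essentially the degree computation followed by Ore's theorem, and the only points needing a bit of care are getting the off-by-one right in the degree formula and verifying that the \emph{strict} inequality in the hypothesis is enough in the "both central" case — which it is, since summing two strict inequalities and dividing by two preserves strictness.
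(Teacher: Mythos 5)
Your proof is correct and follows essentially the same route as the paper: both arguments rest on the degree computation $d_{\mathcal{C}_G}(x)=|G|-|Cyc_G(x)|$ together with the observation that $Cyc_G(x)\subseteq C_G(x)$, so that a vertex with too large a $Cyc_G(x)$ would have to be central and then violate the hypothesis. The paper deduces from this that every vertex has degree strictly greater than $\bigl(|G|-|Cyc(G)|\bigr)/2$ and applies Dirac's theorem, whereas you verify the pairwise Ore condition; this is only a mild repackaging of the same estimate.
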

\begin{proof}
First note that the degree  of any vertex $x$ in  $\mathcal{C}_G$
 is equal to $|G\backslash Cyc_G(x)|$.
We now prove that $|G\backslash Cyc_G(x)|>
\frac{|G|-|Cyc(G)|}{2}$ for all $x\in G\backslash Cyc(G)$.
Suppose, for a contradiction, that $|G\backslash Cyc_G(x)|\leq
\frac{|G|-|Cyc(G)|}{2}$ for some $x\in G\backslash Cyc(G)$. It
follows that $$ 2|Cyc_G(x)|\geq |G| +|Cyc(G)|. \eqno{(*)}$$ It
now follows from $(*)$ that $2|C_G(x)|\geq |G| +|Cyc(G)|$. Since
$|C_G(x)|$ divides $|G|$, we have $|G|=|C_G(x)|$ and so $x\in
Z(G)$. Now $(*)$ contradicts our hypothesis,  as $x$ belongs to
$Z(G)\backslash Cyc(G)$. Therefore $d(x)>
\left(|G|-|Cyc(G)|\right)/2$ for all vertices $x$ of
$\mathcal{C}_G$. Hence by Dirac's theorem \cite[p.54]{bon},
$\mathcal{C}_G$ is Hamiltonian.
\end{proof}
The inequality stated in Proposition \ref{ham} does not hold in
general. For example if $G=C_6 \times S_3$, and $x\in C_6$ is an
element of order $3$, then it is easy to see that $|Cyc_G(x)|=24$
and as $Cyc(G)=1$, we see that the inequality does not hold.
\begin{prop}
Let $G$ be a non locally cyclic group. Then $\mathcal{C}_G$ is
planar if and only if $G$ is isomorphic to $\mathbb{Z}_2\oplus
\mathbb{Z}_2$, $S_3$ or $Q_8$.
\end{prop}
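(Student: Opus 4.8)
The plan is to prove both directions. The ``if'' direction is a finite check: I would verify directly that $\mathcal{C}_{\mathbb{Z}_2\oplus\mathbb{Z}_2}$, $\mathcal{C}_{S_3}$ and $\mathcal{C}_{Q_8}$ are planar. For $\mathbb{Z}_2\oplus\mathbb{Z}_2$ the graph is a triangle $K_3$; for $S_3$ we have $Cyc(S_3)=1$ and the vertices are the three involutions and the two elements of order $3$, where the two $3$-elements generate a common cyclic subgroup so are non-adjacent, while all other pairs are non-cyclic — this is $K_5$ minus one edge, which is planar. For $Q_8$, $Cyc(Q_8)=\langle -1\rangle$ has order $2$, so $\mathcal{C}_{Q_8}$ has six vertices partitioned into three ``cyclic classes'' of size $2$ (the cosets of $\langle -1\rangle$ generated by $i,j,k$), with edges exactly between vertices in different classes; this is the complete tripartite graph $K_{2,2,2}$ (the octahedron), which is planar. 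So in each case one exhibits or cites a plane drawing.

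For the ``only if'' direction, I would first reduce to the case $Cyc(G)=1$ and $G$ finite. If $Cyc(G)\neq 1$, then $\mathcal{C}_G$ contains $\mathcal{C}_{G/Cyc(G)}$ as a subgraph blown up by replacing each vertex with $|Cyc(G)|$ mutually adjacent copies (as in the Hamiltonicity lemma above), so planarity of $\mathcal{C}_G$ forces both $|Cyc(G)|=2$ and $G/Cyc(G)$ small; the only surviving possibility turns out to be $G\cong Q_8$ (where $G/Cyc(G)\cong\mathbb{Z}_2\oplus\mathbb{Z}_2$ and the blow-up by $2$ of a triangle is exactly $K_{2,2,2}$, still planar), while a blow-up by $2$ of $S_3$'s graph or anything larger contains $K_{3,3}$ or $K_5$. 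One must also rule out $|Cyc(G)|\geq 3$: blowing up even a single edge into $K_{3,3}$-type configuration kills planarity, and $\mathcal{C}_{G/Cyc(G)}$ always contains a triangle by Lemma \ref{w>2}, whose blow-up by $3$ contains $K_{3,3}$. So from now on assume $Cyc(G)=1$ and (by Theorem 4.2 of \cite{AM}) $G$ finite.

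Now the main engine is the clique number. A planar graph has no $K_5$, so $\omega(\mathcal{C}_G)\leq 4$; combined with Lemma \ref{w>2}, $\omega(\mathcal{C}_G)\in\{3,4\}$. By Theorems \ref{2.5} and \ref{2.7}, $G=G/Cyc(G)$ is one of $\mathbb{Z}_2\oplus\mathbb{Z}_2$, $\mathbb{Z}_3\oplus\mathbb{Z}_3$, or $S_3$, OR (since we only assumed $Cyc(G)=1$, which gives $G\cong G/Cyc(G)$) $G$ is a finite group with $Cyc(G)=1$ whose quotient by its cyclicizer is one of these — but with $Cyc(G)=1$ this says $G$ itself is $\mathbb{Z}_2\oplus\mathbb{Z}_2$, $\mathbb{Z}_3\oplus\mathbb{Z}_3$, or $S_3$. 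Wait — Theorems \ref{2.5} and \ref{2.7} only pin down $G/Cyc(G)$, and since here $Cyc(G)=1$ this is $G$ itself, so the candidate list is exactly $\{\mathbb{Z}_2\oplus\mathbb{Z}_2,\ \mathbb{Z}_3\oplus\mathbb{Z}_3,\ S_3\}$. It remains to eliminate $\mathbb{Z}_3\oplus\mathbb{Z}_3$: here $\mathcal{C}_G$ has $8$ vertices falling into four cyclic classes of size $2$ (the four subgroups of order $3$, minus the identity), with all cross-class edges present — this is $K_{2,2,2,2}$, which contains $K_5$ (indeed $K_{4,4}\supseteq K_{3,3}$), hence is non-planar. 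That leaves precisely $\mathbb{Z}_2\oplus\mathbb{Z}_2$ and $S_3$ in the $Cyc(G)=1$ case, together with $Q_8$ from the $Cyc(G)\neq 1$ case, completing the list.

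I expect the main obstacle to be the $Cyc(G)\neq 1$ reduction: one must be careful that the ``blow-up'' description ($\mathcal{C}_G$ is $\mathcal{C}_{G/Cyc(G)}$ with each vertex replaced by a clique of size $|Cyc(G)|$ and each edge by a complete bipartite graph between the corresponding cliques, plus the cliques themselves being mutually joined) is literally correct, which follows from the edge criterion in the proof of Lemma \ref{d=3}: $x-y$ in $\mathcal{C}_G$ iff $\bar x-\bar y$ in $\mathcal{C}_{G/Cyc(G)}$, \emph{together with} the fact that two distinct elements in the same coset $gCyc(G)$ are adjacent iff they generate a non-cyclic group, which for a coset of the cyclicizer need not always hold — so one should instead argue via containing a subdivision of $K_{3,3}$ or $K_5$ directly, using that $\mathcal{C}_{G/Cyc(G)}$ contains a triangle and $|Cyc(G)|\geq 2$ supplies enough parallel vertices. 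Once the reduction is clean, the rest is bookkeeping against the already-proved clique classifications, so the write-up is short.
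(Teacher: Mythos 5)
Your overall strategy coincides with the paper's: planarity excludes $K_5$, hence $\omega(\mathcal{C}_G)\leq 4$, so Theorems \ref{2.5} and \ref{2.7} pin down $G/Cyc(G)$, and the remaining work is to locate copies of $K_{3,3}$. The $Cyc(G)=1$ part of your argument is fine (modulo the slip that $K_{2,2,2,2}$ does \emph{not} contain $K_5$ --- any five of its vertices include two in a common part --- though your fallback $K_{3,3}\subseteq K_{4,4}$ is correct). The genuine gap is in the non-trivial cyclicizer case. You correctly observe that the ``blow-up'' is not a join of cliques, since two elements of the same coset of $Cyc(G)$ need not be adjacent; but your proposed repair --- a triangle in $\mathcal{C}_{G/Cyc(G)}$ together with $|Cyc(G)|\geq 2$ parallel copies --- only produces $K_{2,2,2}$, which is the planar octahedron. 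This is exactly why $Q_8$ survives, and it means your argument does not eliminate the case $|Cyc(G)|=2$ with $G/Cyc(G)\cong S_3$. That case is realized by an actual group, the dicyclic group $\langle a,b\mid a^6=1,\ b^2=a^3,\ b^{-1}ab=a^{-1}\rangle$ of order $12$, whose cyclicizer is $\langle a^3\rangle$ of order $2$ with quotient $S_3$; it has clique number $4$, so nothing in your clique-number bookkeeping rules it out, and you never produce a forbidden subgraph for it. The assertion that ``the only surviving possibility turns out to be $Q_8$'' is therefore unproved at the one point where it is not automatic.

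The paper closes this gap with two explicit $K_{3,3}$ constructions that exploit the quotient structure rather than a uniform blow-up. First, if $|Cyc(G)|\geq 3$, a single edge $x-y$ blown up by a three-element subset $C$ of $Cyc(G)$ gives a complete bipartite graph between $Cx$ and $Cy$, i.e.\ $K_{3,3}$; this bounds $|Cyc(G)|\leq 2$. Then, for $G/Cyc(G)\cong S_3$ with a non-trivial $c\in Cyc(G)$, it takes parts $\{a,a^{-1},ac\}$ and $\{b,ab,a^2b\}$, where $\bar a$ has order $3$: all nine cross pairs map to non-cyclic pairs in $S_3$, so this is a $K_{3,3}$ even though no within-part adjacencies are available. (A similar explicit $K_{3,3}$ with parts $\{x,x^{-1},y\}$ and $\{xy,x^{-1}y,(xy)^{-1}\}$ disposes of $G/Cyc(G)\cong\mathbb{Z}_3\oplus\mathbb{Z}_3$ uniformly, with or without trivial cyclicizer.) You would need to add configurations of this kind --- using two distinct non-adjacent cosets on one side of the bipartition, not just parallel copies within one coset --- to make your reduction complete.
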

\begin{proof}
It is easy to see that the non-cyclic graphs of the groups stated
in the lemma  are all planar. Now suppose that $\mathcal{C}_G$ is
planar. Since the complete graph of order $5$ is not planar, we
have $\omega(\Gamma_G)<5$. Thus $G/Cyc(G)$ is isomorphic to
$\mathbb{Z}_p\oplus \mathbb{Z}_p$ or $S_3$, where $p\in\{2,3\}$,
by Theorems \ref{2.5} and \ref{2.7}. Now we prove that
$|Cyc(G)|\leq 2$. Suppose, for a contradiction, that $|Cyc(G)|>2$
and consider a finite subset $C$ of $Cyc(G)$ with $|C|=3$. Let
$x$ and $y$ be two adjacent vertices in $\mathcal{C}_G$. Put
$T=Cx\cup Cy$. Now the induced subgraph $\mathcal{C}_0$ of
$\mathcal{C}_G$ by $T$ is a planar graph. On the other hand,
$\mathcal{C}_0$ is isomorphic to the bipartite graph $K_{3,3}$, a
contradiction, since $K_{3,3}$ is not planar. If $G/Cyc(G)\cong
\mathbb{Z}_2\oplus \mathbb{Z}_2$, then $G\cong \mathbb{Z}_2\oplus
\mathbb{Z}_2$ or $Q_8$. If $G/Cyc(G)\cong \mathbb{Z}_3 \oplus
\mathbb{Z}_3$, then there are  two adjacent vertices $x$ and $y$
such that the orders of $xCyc(G)$ and $yCyc(G)$ are both $3$. Let
$I=\{x,x^{-1},y\}$ and $J=\{xy,x^{-1}y,(xy)^{-1}\}$. In the non-cyclic graph  $\mathcal{C}_G$ every vertex of $I$ is adjacent to
every vertex of $J$. Therefore $\mathcal{C}_G$ contains a copy of
$K_{3,3}$, a contradiction.

If $G/Cyc(G)\cong S_3$, then there are vertices $a$ and $b$ such
that $a\not=a^{-1}$ and  both $a$ and $a^{-1}$ are adjacent to
each vertex in $\{b,ab,a^2b\}$. Now suppose, for a contradiction,
that $Cyc(G)$ contains a non-trivial element $c$. Then
$\{a,a^{-1},ac\}$ and $\{b,ab,a^2b\}$ are the parts of a subgraph
of $\mathcal{C}_G$ isomorphic to $K_{3,3}$, a contradiction.
Therefore, in this case, $Cyc(G)=1$ and so $G\cong S_3$. This
completes the proof.
\end{proof}
\section{\bf A solvability criterion and  new characterizations for the symmetric and alternating groups of degree $5$}
We need the following result in the proof of Theorem \ref{sol}
below.
\begin{prop}\label{l2(p)}{\rm (Proposition 2.6 of  \cite{AM2})} Let $p$ be a prime number, $n$ a positive
integer and $r$ and $q$ be two odd prime numbers dividing
respectively $p^n + 1$ and $p^n - 1$. Then the number of Sylow
$r$-subgroups {\rm(}respectively, $q$-subgroups{\rm)} of $\mathrm{L}_2(p^n)$
is $p^n(p^n-1)/2$ {\rm(}respectively, $p^n(p^n+1)/2${\rm)}. Also any  two distinct Sylow $r$-subgroups or
$q$-subgroups have  trivial intersection.
\end{prop}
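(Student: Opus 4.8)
The plan is to read the statement off Dickson's description of the subgroup lattice of $L:=\mathrm{L}_2(p^n)$. Write $d=\gcd(2,p^n-1)$, so that $|L|=\frac{1}{d}\,p^n(p^n-1)(p^n+1)$. I will use the following standard facts about $L$: it contains a cyclic ``split torus'' $A$ of order $(p^n-1)/d$ and a cyclic ``non-split torus'' $B$ of order $(p^n+1)/d$, with $N_L(A)$ and $N_L(B)$ dihedral of orders $2(p^n-1)/d$ and $2(p^n+1)/d$ respectively; and the centralizer in $L$ of any nontrivial element of odd order coprime to $p$ is cyclic (the exclusion of involutions matters only when $p$ is odd). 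Note that since $r\mid p^n+1$ and $q\mid p^n-1$, neither $r$ nor $q$ equals $p$, and both are odd by hypothesis; so every nontrivial $r$-element (resp.\ $q$-element) of $L$ has odd order coprime to $p$.

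First I would check that $B$ carries a full Sylow $r$-subgroup of $L$, and $A$ a full Sylow $q$-subgroup. Since $r\mid p^n+1$ we have $r\nmid p^n$; also $r\nmid p^n-1$ (else $r\mid(p^n+1)-(p^n-1)=2$, contradicting $r$ odd) and $r\nmid d$, so the $r$-part of $|L|$ equals the $r$-part of $p^n+1$, which is the $r$-part of $|B|$. Hence a Sylow $r$-subgroup of the cyclic group $B$ is already a Sylow $r$-subgroup of $L$; in particular every Sylow $r$-subgroup of $L$ is cyclic, and being conjugate to one inside $B$, it lies in some conjugate of $B$. The symmetric argument (with $p^n-1$ in place of $p^n+1$) shows every Sylow $q$-subgroup is cyclic and lies in a conjugate of $A$.

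The core observation is that each conjugate of $B$ contains exactly one Sylow $r$-subgroup of $L$ and each Sylow $r$-subgroup lies in exactly one conjugate of $B$. For the latter: if a nontrivial $r$-element $x$ lay in $B^{g}\cap B^{h}$, then $C_L(x)$ is cyclic and contains the abelian groups $B^{g}$ and $B^{h}$ (both are centralized by $x$), and as a cyclic group has a unique subgroup of each order while $|B^{g}|=|B^{h}|$, we get $B^{g}=B^{h}$; the former holds because a cyclic group has a unique Sylow $r$-subgroup. Thus the Sylow $r$-subgroups of $L$ are in bijection with the conjugates of $B$, so their number is $|L:N_L(B)|=\frac{p^n(p^n-1)(p^n+1)/d}{2(p^n+1)/d}=\frac{p^n(p^n-1)}{2}$; likewise the number of Sylow $q$-subgroups is $|L:N_L(A)|=\frac{p^n(p^n+1)}{2}$. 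The trivial-intersection assertion follows from the same mechanism: if $R_1\ne R_2$ are Sylow $r$-subgroups with $1\ne x\in R_1\cap R_2$, then $R_1,R_2$ are abelian groups containing $x$, hence both lie in the cyclic group $C_L(x)$, which has a unique Sylow $r$-subgroup, forcing $R_1=R_2$; and identically for two distinct Sylow $q$-subgroups. Observe that the factor $d$ cancels in both index computations, so the cases $p=2$ and $p$ odd are handled uniformly.

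I do not anticipate a genuine obstacle. The only substantive content is the arithmetic of the second paragraph (that a torus carries a full Sylow subgroup of the prime in question) together with careful bookkeeping of the factor $d=\gcd(2,p^n-1)$; everything else is formal. The remaining difficulty is purely citational: one should pin down a precise reference --- Dickson, or Huppert's \emph{Endliche Gruppen I}, or Suzuki's \emph{Group Theory} --- for the orders of the tori $A,B$, for their dihedral normalizers, and for the cyclicity of the centralizer of a nontrivial odd-order $p'$-element of $\mathrm{L}_2(p^n)$.
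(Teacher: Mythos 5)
Your argument is correct, and it is essentially the paper's: the paper simply cites Theorems 8.3 and 8.4 of Chapter II of Huppert's \emph{Endliche Gruppen I} (via Proposition 2.6 of \cite{AM2}), and your proof is exactly the standard deduction from those theorems --- the cyclic tori of orders $(p^n\pm1)/d$ with dihedral normalizers and cyclic centralizers of nontrivial odd-order $p'$-elements. You have merely written out the details that the paper leaves to the references.
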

\begin{proof}
The proof follows from  Theorems 8.3 and 8.4 in chapter II of
\cite{Hup}. For a complete proof see the proof of Proposition 2.6
of \cite{AM2}.
\end{proof}
\begin{lem}\label{26} Let $G$ be  one of the following groups:\\
 $\mathrm{L}_2(2^p)$, $p=4$ or a prime; $\mathrm{L}_2(3^p)$, $\mathrm{L}_2(5^p)$, $p$ a
prime; $\mathrm{L}_2(p)$, $p$ a prime $\geq 7$; $\mathrm{L}_3(3)$,
$\mathrm{L}_3(5)$; $\mathrm{PSU}(3,4)$ {\rm(}the projective special
unitary group of degree $3$ over the finite field of order $4^2${\rm)}
or $\mathrm{Sz}(2^p)$, $p$ an odd prime.\\ Then
$\omega(\mathcal{C}_G)> 31$.
\end{lem}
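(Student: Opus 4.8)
The plan is to show, for each group $G$ in the list, that $G$ contains a collection of more than $31$ cyclic subgroups that pairwise fail to generate a cyclic group — equivalently, by Lemma \ref{2.2} and its method of proof, to produce a large set of pairwise non-cyclic-generating elements. The cleanest uniform device is to count Sylow subgroups for a suitable prime $r$ and invoke the trivial-intersection property: if $P_1,\dots,P_m$ are Sylow $r$-subgroups with $P_i\cap P_j=1$ for $i\neq j$, then picking a nontrivial element $x_i\in P_i$ gives a clique $\{x_1,\dots,x_m\}$ in $\mathcal{C}_G$ (since $\langle x_i,x_j\rangle$ cyclic would force $x_i,x_j$ into a common cyclic, hence a common Sylow $r$-subgroup, contradicting $P_i\cap P_j=1$). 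So it suffices to exhibit, in each case, a prime $r$ with $\nu_r(G)>31$ and pairwise trivial intersection of Sylow $r$-subgroups.

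For the families $\mathrm{L}_2(q)$ with $q=2^p$, $3^p$, $5^p$, or $q=p\geq 7$ prime, Proposition \ref{l2(p)} does exactly this: choosing an odd prime $r\mid q+1$ gives $\nu_r = q(q-1)/2$ Sylow $r$-subgroups with trivial pairwise intersection (or $r\mid q-1$ giving $q(q+1)/2$), and one checks $q(q-1)/2>31$ for all the relevant $q$ — the smallest cases being $\mathrm{L}_2(7)$ with $q=7$ ($7\cdot 8/2 = 28$ from $r\mid q+1$; better take $r\mid q-1$? here $q-1=6$, $r=3$, giving $7\cdot 8/2=28$ again — one must instead note $\mathrm{L}_2(7)$ has $\nu_7 = 8$ and $\nu_3 = 28$ and $\nu_2$ large; actually for $q=7$ one uses that there are many involutions / Klein four subgroups, or simply that $\mathrm{L}_2(7)\cong \mathrm{L}_3(2)$ and count directly). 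In fact for $\mathrm{L}_2(8)$: $q+1=9$, $r=3$, $\nu_3 = 8\cdot 7/2 = 28$; and $q-1 = 7$, $\nu_7 = 8\cdot 9/2 = 36 > 31$. For $\mathrm{L}_2(9)\cong A_6$ similarly one gets well over $31$. The point is that for every listed $\mathrm{L}_2$-group at least one of the two bounds $q(q\pm1)/2$ exceeds $31$, and Proposition \ref{l2(p)} supplies the trivial-intersection condition, so $\omega(\mathcal{C}_G)\geq \nu_r(G) > 31$.

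For the remaining sporadic-type entries — $\mathrm{L}_3(3)$, $\mathrm{L}_3(5)$, $\mathrm{PSU}(3,4)$, and $\mathrm{Sz}(2^p)$ — I would argue individually. For $\mathrm{Sz}(2^p)$ ($p$ odd prime, so $q=2^p\geq 8$), the Suzuki groups have order $q^2(q^2+1)(q-1)$ with cyclic Hall subgroups of orders $q-1$, $q+r+1$, $q-r+1$ (where $r^2=2q$), any two conjugates of which meet trivially; the number of conjugates of a torus of order $q-1$ is $q^2(q^2+1)/2$, which for $q=8$ already is $32\cdot 65 = 2080 \gg 31$. For $\mathrm{L}_3(3)$ ($|G| = 5616$) and $\mathrm{L}_3(5)$, and for $\mathrm{PSU}(3,4)$, I would pick a prime $r$ dividing $|G|$ to an order where a Sylow $r$-subgroup is cyclic and self-centralizing with a large normalizer index — e.g. for $\mathrm{L}_3(3)$ a Sylow $13$-subgroup is cyclic of order $13$ with $\nu_{13} = |G|/(13\cdot 3) = 144 > 31$ and trivial pairwise intersection (being of prime order), and analogously $\mathrm{PSU}(3,4)$ has a cyclic Sylow $13$- or $5$-subgroup with many conjugates. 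In each case the count is routine once the Sylow order is prime (automatic trivial intersection) or once one cites the cyclic-torus structure.

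The main obstacle is purely bookkeeping rather than conceptual: for the two or three \emph{smallest} groups in the list — notably $\mathrm{L}_2(7)$ and $\mathrm{L}_2(8)$ — the "obvious" torus count $q(q-1)/2$ can land at $28$ rather than above $31$, so one must be careful to pick the \emph{right} prime (the divisor of $q+1$ versus $q-1$) or, failing that, to combine two families of pairwise-non-cyclic-generating elements via the second assertion of Lemma \ref{2.2} (summing $\nu_{p_i}(G)$ over primes $p_i$ with no element of order $p_ip_j$). For $\mathrm{L}_2(7)$ specifically, elements of order $7$, elements of order $3$, and involutions have pairwise coprime-ish behaviour that lets one add $\nu_7 + \nu_2 \geq 8 + 21 > 31$ — or one simply verifies the $28$ three-cycles plus a few more directly. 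I would handle these small cases explicitly at the start, then dispatch the infinite families uniformly with Proposition \ref{l2(p)} and the Suzuki torus count, and finish with the one-line prime-order Sylow counts for $\mathrm{L}_3(3)$, $\mathrm{L}_3(5)$, $\mathrm{PSU}(3,4)$.
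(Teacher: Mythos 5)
Your overall strategy is the same as the paper's: for each listed group, exhibit a prime $r$ whose Sylow (or torus) subgroups intersect pairwise trivially and number more than $31$, so that Lemma \ref{2.2} gives $\omega(\mathcal{C}_G)\geq\nu_r(G)>31$; use Proposition \ref{l2(p)} for the $\mathrm{L}_2$ families, ad hoc Sylow counts for $\mathrm{L}_3(3)$, $\mathrm{L}_3(5)$, $\mathrm{PSU}(3,4)$, $\mathrm{Sz}(2^p)$, and the additive part of Lemma \ref{2.2} for the small exceptional cases. The only substantive cosmetic difference is that for $\mathrm{L}_2(p^n)$ with $n\geq 2$ the paper counts subgroups of order $p$ inside the $p^n+1$ pairwise trivially intersecting elementary abelian Sylow $p$-subgroups, getting $(p^n+1)(p^n-1)/(p-1)>31$ in one stroke, while you work with the cross-characteristic tori; both routes succeed.

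There is one step that fails as written: your treatment of $\mathrm{L}_2(7)$. You propose $\nu_7+\nu_2\geq 8+21>31$, but $8+21=29$, so that combination is insufficient (and the element orders in $\mathrm{L}_2(7)$ are $1,2,3,4,7$, so the hypothesis of Lemma \ref{2.2} is met but the bound is too small). The repair is exactly the combination the paper uses, and you already have the ingredients: $\nu_3(\mathrm{L}_2(7))=28$ by Proposition \ref{l2(p)} and $\nu_7=8$, and since $\mathrm{L}_2(7)$ has no element of order $21$, Lemma \ref{2.2} gives $\omega(\mathcal{C}_{\mathrm{L}_2(7)})\geq\nu_3+\nu_7=36>31$ (alternatively $\nu_2+\nu_3=21+28=49$ works, as there is no element of order $6$). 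With that single arithmetic correction your argument goes through.
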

\begin{proof} For every group listed above  we find a set $S$ of prime
numbers $p$ for which   Lemma \ref{2.2} is applicable. \\
 For every prime number $p$ and every
integer $n>0$, we have
 that the number of Sylow
$p$-subgroups of $\mathrm{L}_2(p^n)$  which are elementary abelian,
is $p^n+1$ and  any two distinct Sylow
$p$-subgroups have trivial intersection (see chapter II Theorem 8.2 (b),(c) of
\cite{Hup}). It follows that
$\nu_p(\mathrm{L}_2(p^n))=(p^n+1)(p^n-1)/(p-1)$. Thus among the
projective special linear groups, we only need to investigate the
following groups: $\text{L}_3(3)$, $\text{L}_3(5)$,
$\text{L}_2(p)$ for $p\in \{7,11,13,17,19,23,29\}$. Now if in
Proposition \ref{l2(p)}, we take $q=3$ for $\text{L}_2(13)$ and
$\text{L}_2(19)$; and $r=3$ for $\text{L}_2(11)$,
$\text{L}_2(17)$, $\text{L}_2(23)$ and $\text{L}_2(29)$; Then by
Lemma \ref{2.2} we are done in these cases. Therefore we must
consider the groups $\text{L}_2(7)$, $\text{L}_3(3)$,
$\text{L}_3(5)$, $\text{PSU}(3,4)$ or $\text{Sz}(2^p)$, $p$ an odd prime.\\
If $G=\text{L}_2(7)$, then $|G|=2^3 \times 3 \times 7$ and $G$
has no element of order $3\times 7$. Now it follows from Lemma
\ref{2.2}, that $\nu_3(G)+ \nu_7(G)\leq w(\mathcal{C}_G)$. Now by
Proposition \ref{l2(p)}, we have $\nu_3(G)=28$ and $\nu_7(G)=8$
and so $\omega(\mathcal{C}_G)\geq 36$. If $G=\text{L}_3(3)$, then
$|G|=2^4\times 3^3\times 13$ and $G$ has no element of order
$3\times 13$. Thus $\nu_{13}(G)=1+13k$ and $\nu_3(G)=1+3\ell$, for
some $k>0$ and $\ell>0$. Since $14$ does not divide $|G|$ and no
non-abelian simple group contains a subgroup of index less than 5,
$\nu_{13}(G)\geq 27$ and $\nu_3(G)\geq 7$. Now it follows from
Lemma \ref{2.2} that $\omega(\mathcal{C}_G)\geq 34$. If
$G=L_3(5)$, then $|G|=2^5\times 3\times 5^3 \times 31$. Thus
$\nu_{31}(G)=1+31k$, for some $k>0$ and so $\nu_{31}(G)>31$. If
$G=\text{PSU}(3,4)$, then $|G|=2^6\times 3\times 5^2\times 13$ (see
Theorem 10.12(d) of chapter II in \cite{Hup} and note that
$\text{PSU}(3,4)$ is the projective special unitary group of
degree 3 over the finite field of order $4^2$). Therefore
$\nu_{13}(G)= 1+13k$ for some $k>0$ and since $14$ does not
divide $|L|$, $\nu_{13}(L)>26$. If $G=\mathrm{Sz}(2^p)$ ($p$ an odd
prime), then it follows from  Theorem 3.10 (and its proof) of
chapter XI in \cite{Hup} that $\nu_2(G)\geq 2^{2p}+1\geq 65$.
This completes the proof.
\end{proof}
\begin{thm}\label{sol}
Let $G$ be a  non locally cyclic group.
\begin{enumerate}
\item If   $\omega(\mathcal{C}_G)=31$, then $G$ is simple if and only if
$G\cong A_5$.
\item If  $\omega(\mathcal{C}_G)\leq 30$, then $G$ is solvable.
\end{enumerate}
\end{thm}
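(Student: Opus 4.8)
The plan is to reduce to the case of a finite group with trivial cyclicizer, and then run a minimal-counterexample argument against the Classification of Finite Simple Groups.

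First I would observe, using Lemma \ref{w-cyc} and Lemma 2.3-(2) of \cite{AM}, that we may assume $G$ is finite with $Cyc(G)=1$, since $\omega(\mathcal{C}_G)=\omega(\mathcal{C}_{G/Cyc(G)})$ and the latter group is finite. So suppose, for a contradiction, that $\omega(\mathcal{C}_G)\leq 31$ but $G$ is non-solvable; choose $G$ of least order among such groups. The key structural input is Lemma \ref{G>G/N}: if $N\trianglelefteq G$ and $G/N$ is not locally cyclic then $\omega(\mathcal{C}_{G/N})\leq \omega(\mathcal{C}_G)$. Combined with minimality this will force $G$ to be close to simple. Concretely, if $N$ is a proper nontrivial normal subgroup, then either $G/N$ is non-solvable, contradicting minimality of $|G|$ (note $G/N$ is again non locally cyclic since it is a non-solvable group, hence non-cyclic, and we may pass to $(G/N)/Cyc(G/N)$), or $G/N$ is solvable; a short argument handling the solvable-by-solvable structure, or simply noting that a minimal non-solvable group has a unique minimal normal subgroup which is a direct power of a non-abelian simple group, pins down $G$ as being essentially a non-abelian simple group (or a very restricted extension thereof). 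The cleanest route is: reduce to $G$ having a composition factor $S$ that is a non-abelian simple group, note $\omega(\mathcal{C}_S)\leq \omega(\mathcal{C}_G)\leq 31$ via Lemma \ref{G>G/N} applied to a chief series (taking quotients that remain non locally cyclic), and so it suffices to show every non-abelian finite simple group $S$ satisfies $\omega(\mathcal{C}_S)\geq 31$, with equality only for $A_5$.

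The heart of the proof is therefore the following claim: if $S$ is a non-abelian finite simple group then $\omega(\mathcal{C}_S)\geq 31$, with equality iff $S\cong A_5$. For $A_5$ one computes directly: $A_5$ has $15$ subgroups of order $2$, $10$ of order $3$, and $6$ of order $5$, and no element of order $6$, $10$, or $15$, so by Lemma \ref{2.2}, $\omega(\mathcal{C}_{A_5})\geq 15+10+6=31$; a matching upper bound follows since every maximal cyclic subgroup of $A_5$ has prime order and picking one generator from each gives a maximal clique of size exactly $31$ (using Theorem 4.7 of \cite{AM}). For the general lower bound $\omega(\mathcal{C}_S)\geq 31$, I would invoke CFSL and go through the list. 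The alternating groups $A_n$ for $n\geq 6$ contain $A_5$ as a subgroup on a $5$-point support together with many more subgroups of prime order (or one simply observes $\nu_2(A_n)$ already exceeds $31$ for $n\geq 6$). The sporadic groups all have order divisible by primes to powers forcing $\nu_p\geq 31$ for some $p$ (or have $\nu_2$ enormous). For groups of Lie type the workhorse is Lemma \ref{26}, which already dispatches $\mathrm{L}_2(q)$ for the relevant $q$, $\mathrm{L}_3(3)$, $\mathrm{L}_3(5)$, $\mathrm{PSU}(3,4)$ and the Suzuki groups $\mathrm{Sz}(2^p)$; the minimal simple groups of Thompson's classification are exactly $\mathrm{L}_2(2^p)$, $\mathrm{L}_2(3^p)$, $\mathrm{L}_2(p)$ ($p>3$ with $5\mid p^2-1$), $\mathrm{Sz}(2^p)$, and $\mathrm{L}_3(3)$, so if one restricts attention to a \emph{minimal} non-solvable (equivalently minimal simple) group, Lemma \ref{26} literally finishes the case analysis. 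This is the cleanest packaging: take $G$ a minimal non-solvable group, deduce via Lemma \ref{G>G/N} and minimality that $G/Cyc(G)$ is a minimal simple group, invoke Thompson's list, and apply Lemma \ref{26} to get $\omega(\mathcal{C}_G)>31$ — contradiction for part (2), and for part (1) note that among simple groups only $A_5$ can have $\omega(\mathcal{C}_G)=31$, while conversely $\omega(\mathcal{C}_{A_5})=31$ as computed above.

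The main obstacle is ensuring the reduction to a minimal simple group is airtight: one must check that when passing to quotients the group stays non locally cyclic so that Lemma \ref{G>G/N} applies, and one must justify that a minimal non-solvable group $G$ (meaning every proper subgroup and every proper quotient is solvable) has $G/Cyc(G)$ actually simple rather than merely a minimal simple group up to a central-type piece — but since $Cyc(G)\leq Z(G)$ and we have arranged $Cyc(G)=1$, the unique minimal normal subgroup $N$ of $G$ is non-abelian simple (a minimal non-solvable group has a unique minimal normal subgroup, which is a direct product of isomorphic non-abelian simple groups, and $\omega$-finiteness via Lemma \ref{2.2} forces it to be a single copy), and $G/N$ being solvable with $N$ simple non-abelian and $C_G(N)=1$ gives $N\leq G\leq \mathrm{Aut}(N)$; then $\omega(\mathcal{C}_N)\leq\omega(\mathcal{C}_G)\leq 31$ by Lemma \ref{G>G/N} applied to the embedding direction (or rather to the fact that $\omega$ is monotone under subgroups for $\mathcal{C}$, Theorem 4.2 of \cite{AM}), and $N$ is one of the minimal simple groups on Thompson's list or contains one, reducing to Lemma \ref{26} and the explicit $A_5$ computation. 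I would write this out carefully, as the bookkeeping about which direction of inequality (subgroup versus quotient) is being used is the only place the argument can go wrong.
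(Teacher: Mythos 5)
Your part (2) matches the paper's proof essentially verbatim: reduce to a finite group with trivial cyclicizer via Lemma \ref{w-cyc}, take a minimal-order counterexample, use Lemma \ref{G>G/N} and subgroup-monotonicity of $\omega$ to show it is a minimal simple group, invoke Thompson's classification, and finish with Lemma \ref{26}. Your $A_5$ computation ($\nu_2+\nu_3+\nu_5=15+10+6=31$ via Lemma \ref{2.2}, with the upper bound coming from the fact that the $31$ maximal cyclic subgroups of $A_5$ have prime order and pairwise trivial intersection) is also the paper's.

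The gap is in part (1). The claim that a \emph{simple} group with $\omega(\mathcal{C}_G)=31$ must be $A_5$ does not reduce to minimal simple groups: a simple group $S$ containing $A_5$ as a proper subgroup has $\omega(\mathcal{C}_S)\geq 31$ and could a priori have equality, and your ``cleanest packaging'' (minimal non-solvable group, Thompson's list, Lemma \ref{26}) says nothing about such $S$. You acknowledge this by proposing a full CFSG sweep, but that sweep is not carried out: the sporadic groups are dismissed by the unproved assertion that some $\nu_p\geq 31$, and ``for groups of Lie type the workhorse is Lemma \ref{26}'' fails because Lemma \ref{26} covers only a specific finite list, not $\mathrm{L}_3(q)$ for general $q$, $\mathrm{PSp}_4(q)$, $G_2(q)$, the unitary and orthogonal families, and so on. The paper's missing ingredient is Proposition 3 of \cite{BR}: a minimal-order simple counterexample $T$ to part (1) has every proper non-abelian simple section isomorphic to $A_5$ (such a section has $\omega\leq\omega(\mathcal{C}_T)=31$, hence exactly $31$ by the bound for non-solvable groups, hence is $A_5$ by minimality), and Blyth and Robinson classify precisely the simple groups with this property---their list is exactly the list of Lemma \ref{26}, which is visibly larger than Thompson's list (it contains the non-minimal simple groups $\mathrm{L}_2(16)$, $\mathrm{L}_2(5^p)$, $\mathrm{L}_3(5)$ and $\mathrm{PSU}(3,4)$ precisely to cover this case). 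Without that classification, or a genuinely completed CFSG case analysis, part (1) is not proved.
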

\begin{proof}
1) \; It follows from \cite[Theorem 4.2]{AM} that $G/Cyc(G)$ is
finite. Thus, if $G$ is simple, it is finite. Now  suppose, for a
contradiction, that there exists a non-cyclic finite simple group
$K$ with $\omega(\mathcal{C}_K)=31$ which is not isomorphic to
$A_5$. Let $T$ be such a group of least order. Thus every proper
non-abelian simple section of $K$ is isomorphic to $A_5$.
Therefore by Proposition 3 of \cite{BR}, $T$ is isomorphic to one
of the groups in the statement of Lemma \ref{26}, which is
impossible. This implies that $G\cong A_5$.\\
Now we prove that $\omega(\mathcal{C}_{A_5})=31$. Note that the
order of  an element of $A_5$ is $2$, $3$ or $5$; $A_5$ has five
Sylow $2$-subgroups, ten  Sylow $3$-subgroups and six Sylow
$5$-subgroups; and any two distinct Sylow subgroups has trivial
intersection. Now consider the set of all non-trivial 2-elements
of $A_5$ and select one group generator from each  Sylow
$p$-subgroup for $p\in\{3,5\}$. Then the union $C$ of these sets
is of size $31$ and every two distinct element of $C$ generate a
non-cyclic subgroup. On the other hand, since $A_5$ is the union
of its Sylow subgroups, it is easy to show that every clique set
of $\mathcal{C}_{A_5}$ is of size at most $31$. This completes
the proof of (1). \\

\noindent 2) \; It follows from \cite[Theorem 4.2]{AM} that
$G/Cyc(G)$ is finite and by Lemma \ref{w-cyc}, we may assume that
$G$ is finite. Let $K$ be a counter-example of the least order.
Thus every proper subgroup of $K$ is solvable and $G$ is a
non-abelian simple group. That is to say, $K$ is a minimal simple
group.  Thus according to Thompson's classification  of the
minimal simple groups in \cite{Thomp}, $K$ is
isomorphic to one of the following:\\
 $\text{L}_2(p)$ for some prime $p\geq 5$, $\text{L}_2(2^p)$ or  $\text{L}_2(3^p)$ for some prime $p\geq
 3$,  $\text{Sz}(2^p)$ for some prime $p\geq 3$, or
 $\text{L}_3(3)$.\\ By Lemma \ref{l2(p)} and part (1) we have that
 $\omega(\mathcal{C}_K)\geq 31$ and this contradicts the hypothesis.
 Hence $G$ is solvable.
\end{proof}
\begin{lem}\label{S5=A5} Let $G$ be either $A_5$ or $S_5$. Then
$\omega(\mathcal{C}_{G})=31$. Moreover, every non-trivial element
of $G$ is contained in a maximum clique of $\mathcal{C}_G$.
\end{lem}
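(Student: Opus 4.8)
The plan is to establish the two halves separately, treating $A_5$ first since the equality $\omega(\mathcal{C}_{A_5})=31$ has already essentially been proved inside Theorem \ref{sol}(1); I would simply cite that computation: the order of every element of $A_5$ is $2$, $3$, or $5$, the Sylow subgroups pairwise intersect trivially, and there are $5$ Sylow $2$-subgroups, $10$ Sylow $3$-subgroups and $6$ Sylow $5$-subgroups. Picking all $15$ involutions together with one generator from each Sylow $3$-subgroup and one from each Sylow $5$-subgroup gives a clique of size $15+10+6=31$, and since $A_5$ is covered by its Sylow subgroups, any clique meets each cyclic subgroup of prime order in at most one element, so $31$ is also an upper bound. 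For the ``moreover'' part in the $A_5$ case, I would observe that a non-trivial element $x$ lies in a unique maximal cyclic subgroup $\langle x\rangle$ of prime order $p\in\{2,3,5\}$ (every cyclic subgroup of $A_5$ has prime order), and the clique $C$ constructed above can be chosen to contain $x$: just select $x$ as the representative generator of its own Sylow $p$-subgroup (for $p=2$ all involutions are already in $C$). Since $|C|=31=\omega(\mathcal{C}_{A_5})$, this $C$ is a maximum clique containing $x$.

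For $S_5$, I would first pin down $\omega(\mathcal{C}_{S_5})$. The element orders in $S_5$ are $1,2,3,4,5,6$. I would count the maximal cyclic subgroups: the $\langle 5\text{-cycle}\rangle$'s (there are $24/4=6$ of them), the $\langle 6\text{-element}\rangle$'s generated by an element of cycle type $(2,3)$ (there are $20/2=10$, each containing a unique $3$-cycle and a unique transposition of disjoint support), the $\langle 4\text{-cycle}\rangle$'s of cycle type $(4)$ (there are $30/2=15$, each containing a unique double transposition), the $\langle 3\text{-cycle}\rangle$'s not already inside a $6$-element subgroup — but in fact every $3$-cycle $(abc)$ sits inside $\langle (abc)(de)\rangle$, so no $3$-cycle generates a maximal cyclic subgroup — and the transpositions $\langle (ab)\rangle$: the transposition $(ab)$ lies in $\langle (ab)(cde)\rangle$ only if... no, $(ab)$ does \emph{not} lie in any cyclic group generated by a $(2,3)$-element unless that $6$-element's square-part equals $(ab)$; an element of type $(2,3)$ has the form $(ab)(cde)$ and its powers are $1,(cde)(ab),(ced),(ab),(cde),(ced)(ab)$, so indeed $(ab)$ \emph{is} a power of $(ab)(cde)$. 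Hence every transposition lies in a $6$-element maximal cyclic subgroup, and likewise every double transposition lies in a $4$-cycle's group while every $2$-element of type $(4)$'s square is a double transposition. So the maximal cyclic subgroups are exactly the $6$ of order $5$, the $10$ of order $6$, and the $15$ of order $4$, for a total of $31$; by Theorem 4.7 of \cite{AM} (a maximal clique of a finite group corresponds to a complete list of maximal cyclic subgroups) we get $\omega(\mathcal{C}_{S_5})=31$.

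Alternatively — and this is cleaner — I would invoke Lemma \ref{G>G/N} with $N=1$ and the fact that $A_5\trianglelefteq S_5$: since $Cyc(S_5)=1$ and $A_5$ is not contained in $Cyc(S_5)$, Lemma \ref{G>G/N} gives $\omega(\mathcal{C}_{S_5/A_5})<\omega(\mathcal{C}_{S_5})$, which is uninformative as $S_5/A_5$ is cyclic; so that route fails and I fall back on the direct enumeration above, or on the observation that $\omega(\mathcal{C}_{S_5})\geq\omega(\mathcal{C}_{A_5})$ is false in general — rather, a clique of $A_5$ need not be a clique of $S_5$, but in fact the $31$ maximal cyclic subgroups of $A_5$ sit inside distinct maximal cyclic subgroups of $S_5$ (a $2$-element of $A_5$ is a double transposition, lying inside a $4$-cycle group; a $3$-element lies inside a $6$-element group; a $5$-element group is already maximal), giving an injection that shows $\omega(\mathcal{C}_{S_5})\geq 31$, and the count of maximal cyclic subgroups above shows there are exactly $31$, hence equality. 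For the ``moreover'' clause: given any non-trivial $x\in S_5$, it lies in a unique maximal cyclic subgroup $C_x=\langle g_x\rangle$; the set $\{g_1,\dots,g_{31}\}$ of generators of \emph{all} the maximal cyclic subgroups, chosen so that $g_x$ is the generator of $C_x$ picked for the index corresponding to $C_x$, is a clique of size $31$ (two generators of distinct maximal cyclic subgroups cannot generate a cyclic group, else that cyclic group would contain both maximal ones) containing $x$, hence a maximum clique. The main obstacle is the bookkeeping of which cyclic subgroups of $S_5$ are maximal — in particular verifying that no transposition and no $3$-cycle generates a maximal cyclic subgroup because each is a power of a suitable order-$6$ element — but once that is settled the counting is immediate.
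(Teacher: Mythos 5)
Your argument reaches the right conclusion but by a genuinely different route from the paper's. The paper gets the lower bound $\omega(\mathcal{C}_{S_5})\geq 31$ from the one-line observation that a clique of $\mathcal{C}_{A_5}$ is a clique of $\mathcal{C}_{S_5}$, and gets the upper bound from a \textsf{GAP}-verified fact: for the $31$-element clique $C$ of $A_5$ constructed in Theorem \ref{sol}, the sets $Cyc_{S_5}(x)$, $x\in C$, are cyclic, cover $S_5$, and satisfy $Cyc_{S_5}(x)=Cyc_{S_5}(a)$ for every non-trivial $a\in Cyc_{S_5}(x)$; the ``moreover'' clause is then read off from this last property. You instead enumerate the maximal cyclic subgroups of $S_5$ by hand ($6$ of order $5$, $10$ of order $6$, $15$ of order $4$, and nothing else, since every transposition, $3$-cycle and double transposition is a power of an element of order $6$ or $4$) and invoke the correspondence between maximal cliques and complete lists of maximal cyclic subgroups (Theorem 4.7 of \cite{AM}, as quoted at the start of Section 3). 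Your count is correct, and this buys a computer-free, fully explicit proof at the cost of some cycle-type bookkeeping; the content is the same, since the paper's $(*)$ is precisely the statement that these $31$ subgroups are the maximal cyclic subgroups and partition $S_5\setminus\{1\}$.

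Two points need repair. First, your aside that ``$\omega(\mathcal{C}_{S_5})\geq\omega(\mathcal{C}_{A_5})$ is false in general'' is itself false: whether $\langle x,y\rangle$ is cyclic does not depend on the ambient group, and $H\setminus Cyc(H)\subseteq G\setminus Cyc(G)$ for any subgroup $H\leq G$, so $\mathcal{C}_H$ is an induced subgraph of $\mathcal{C}_G$ and the inequality always holds (the paper opens its proof with exactly this). This slip is harmless here because your enumeration delivers both bounds anyway. Second, and more substantively, your ``moreover'' argument for $S_5$ produces a maximum clique containing a \emph{generator} of the maximal cyclic subgroup $C_x$ containing $x$, not $x$ itself; when $x$ is a transposition, a $3$-cycle or a double transposition, $x$ does not generate $C_x$, so the clique you exhibit need not contain $x$. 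The fix is short: replace the chosen generator of $C_x$ by $x$. Since your covering count shows the $31$ maximal cyclic subgroups partition $S_5\setminus\{1\}$, the element $x$ lies in no maximal cyclic subgroup other than $C_x$, so $\langle x,g_j\rangle$ cannot be cyclic for a generator $g_j$ of a different maximal cyclic subgroup, and the modified set is still a clique of size $31$ containing $x$.
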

\begin{proof}
Clearly $\omega(\mathcal{C}_{S_5})\geq w(\mathcal{C}_{A_5})$. Let
$C$ be the clique found in Theorem \ref{sol} for
$\mathcal{C}_{A_5}$. It is easy to see (e.g., by {\sf GAP}
\cite{GAP}) that
$$G=\bigcup_{x\in C} Cyc_G(x) \;\;\text{and}\;\; Cyc_G(x) \;\text{is a cyclic subgroup for
all}\;\; x\in C$$  $$ \text{and} \;\; Cyc_G(x)=Cyc_G(a) \;\;
\text{for all non-trivial} \;  a\in Cyc_G(x). \eqno{(*)}$$ Thus
$G$ is the union of 31 cyclic subgroups and so
$\omega(\mathcal{C}_{S_5})\leq 31$. It follows that
$\omega(\mathcal{C}_{G})=31$. The second part follows easily from
$(*)$. This completes the proof.
\end{proof}
\begin{thm}\label{non-sol-31}
Let $G$ be a  non-solvable group. Then $\omega(\mathcal{C}_G)=31$
if and only if $G/Cyc(G)\cong A_5$ or $S_5$.
\end{thm}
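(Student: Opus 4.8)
\emph{Proof proposal.} The plan is to prove the two implications separately; the ``if'' direction is short. If $G/Cyc(G)\cong A_5$ or $S_5$, then $G$ is non-solvable (since $G/Cyc(G)$ is), and since a clique of $\mathcal{C}_G$ maps injectively onto a clique of $\mathcal{C}_{G/Cyc(G)}$ by Lemma \ref{d=3}, $\mathcal{C}_G$ has no infinite clique, so Lemma \ref{w-cyc} and Lemma \ref{S5=A5} give $\omega(\mathcal{C}_G)=\omega(\mathcal{C}_{G/Cyc(G)})=31$. For the converse I would first reduce to a convenient situation: if $G$ is non-solvable with $\omega(\mathcal{C}_G)=31$, then since $Cyc(G)$ is locally cyclic (hence abelian) the quotient $G/Cyc(G)$ is still non-solvable, is finite by \cite[Theorem 4.2]{AM}, has trivial cyclicizer by Lemma 2.3 of \cite{AM}, and satisfies $\omega(\mathcal{C}_{G/Cyc(G)})=31$ by Lemma \ref{w-cyc}. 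Replacing $G$ by $G/Cyc(G)$, it then suffices to show that a finite non-solvable group $G$ with $Cyc(G)=1$ and $\omega(\mathcal{C}_G)=31$ is isomorphic to $A_5$ or $S_5$.

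I would next record three facts. (a) The clique number is monotone under subgroups: if $H\le G$ is not locally cyclic then a clique of $\mathcal{C}_H$ is again a clique of $\mathcal{C}_G$ (indeed $x\notin Cyc(H)$ implies $x\notin Cyc(G)$, and $\langle x,y\rangle$ is the same whether read in $H$ or in $G$), so $\omega(\mathcal{C}_H)\le\omega(\mathcal{C}_G)$. (b) No group with clique number $31$ contains a subgroup isomorphic to $A_5\times A_5$: in $A_5\times A_5$ the $60$ subgroups of the form $P\times Q$, with $P$ a Sylow $3$-subgroup of the first factor and $Q$ a Sylow $5$-subgroup of the second, are pairwise distinct and each is cyclic of order $15$; as no element of $A_5\times A_5$ has order greater than $15$, a subgroup generated by elements of two different such $P\times Q$'s is non-cyclic, so selecting one generator from each gives a clique of $\mathcal{C}_{A_5\times A_5}$ of size $60>31$, and (a) finishes the point. (c) No minimal normal subgroup of $G$ is abelian: were $N$ such a subgroup, then $G/N$ would be non-solvable, hence not locally cyclic, so $\omega(\mathcal{C}_{G/N})\le\omega(\mathcal{C}_G)=31$, and $\omega(\mathcal{C}_{G/N})=31$ by Theorem \ref{sol}(2); but then Lemma \ref{G>G/N} would force $N\le Cyc(G)=1$, a contradiction.

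The main step is then to analyse a minimal normal subgroup $N$ of $G$. By (c) it is non-abelian, so $N=S_1\times\cdots\times S_k$ with the $S_i$ isomorphic non-abelian simple groups; each $S_i$ is a subgroup of $G$, so $\omega(\mathcal{C}_{S_i})\le 31$ by (a), hence $=31$ by Theorem \ref{sol}(2), hence $S_i\cong A_5$ by Theorem \ref{sol}(1). If $k\ge 2$ this puts $A_5\times A_5$ inside $G$, against (b); so $N\cong A_5$. Next I would show $C_G(N)=1$: otherwise the non-trivial normal subgroup $C_G(N)$ contains a minimal normal subgroup $M$ of $G$, which by (c) and the previous argument has a subgroup isomorphic to $A_5$; since $[M,N]=1$ and $M\cap N\le C_G(N)\cap N=Z(N)=1$, we get $A_5\times A_5\le MN=M\times N\le G$, again contradicting (b). With $C_G(N)=1$, conjugation embeds $G$ into $\mathrm{Aut}(N)=\mathrm{Aut}(A_5)\cong S_5$ with image containing $\mathrm{Inn}(N)\cong A_5$, and since $A_5$ has index $2$ in $S_5$ this forces $G\cong A_5$ or $S_5$.

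I expect the hard part to be this last paragraph: pinning $N$ down to a single copy of $A_5$ and, above all, showing $C_G(N)=1$ so that $G$ embeds in $\mathrm{Aut}(A_5)$. The crucial leverage for ruling out the bad configurations (an elementary abelian minimal normal subgroup, and $C_G(N)\ne 1$) is the combination of Lemma \ref{G>G/N}, Theorem \ref{sol}, and the elementary computation in (b) that $\mathcal{C}_{A_5\times A_5}$ already has clique number far above $31$; the rest is routine bookkeeping with cyclicizers together with standard facts about $A_5$.
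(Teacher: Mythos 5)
Your proof is correct, and it reaches the same endgame as the paper (a normal copy of $A_5$ with trivial centralizer, hence an embedding of $G$ into $\mathrm{Aut}(A_5)\cong S_5$), but the middle of the argument is organized genuinely differently. The paper first factors out the largest normal solvable subgroup $S$, invokes Lemma 2.1 of \cite{AM2} together with Theorem \ref{sol} to identify the product $R$ of the minimal normal non-abelian subgroups of $G/S$ as a single copy of $A_5$ with $C_{G/S}(R)=1$, deduces $G/S\cong A_5$ or $S_5$, and only at the very end uses Lemma \ref{G>G/N} (with Lemma \ref{S5=A5}) to conclude $S\le Cyc(G)=1$. You never pass to the quotient by the solvable radical: you deploy Lemma \ref{G>G/N} at the outset to rule out abelian minimal normal subgroups of $G$ itself, and you replace the citation of Lemma 2.1 of \cite{AM2} by two self-contained observations --- monotonicity of the clique number under subgroups and the explicit $60$-element clique in $\mathcal{C}_{A_5\times A_5}$ built from the cyclic subgroups $P\times Q$ of order $15$ --- which let you pin a minimal normal subgroup down to a single copy of $A_5$ and then force $C_G(N)=1$. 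Your route buys self-containedness (no appeal to \cite{AM2}) at the cost of some extra bookkeeping; the paper's route buys brevity by quoting the structural facts about groups with trivial solvable radical. Both arguments ultimately rest on the same pillars: Lemma \ref{w-cyc}, Lemma \ref{G>G/N}, and Theorem \ref{sol} together with Lemma \ref{S5=A5}.
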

\begin{proof}
If $G/Cyc(G)\cong A_5$ or $S_5$, then it follows from Lemmas \ref{w-cyc} and \ref{S5=A5} that $\omega(\mathcal{C}_G)=31$. \\
Suppose that $\omega(\mathcal{C}_G)=31$. By Theorem 4.2 of
\cite{AM}, $G/Cyc(G)$ is finite. Thus we may assume without loss
of generality that $G$ is finite and $Cyc(G)=1$ and so we have to
prove
$G\cong A_5$ or $S_5$.\\
 Let $S$ be the largest normal solvable subgroup of $G$ (here, for the existence of $S$  we use the finiteness of $G$).
Then $\overline{G}=G/S$ has no non-trivial abelian normal
subgroup. Let $R$ be the product of all minimal normal non-abelian
subgroups of $\overline{G}$. It follows from Lemma 2.1 of
\cite{AM2} and Theorem \ref{sol} that $R\cong A_5$. Since
$C_{\overline{G}}(R)=1$, we have that $\overline{G}$ is
isomorphic to a subgroup of $S_5$. It follows that
$\overline{G}\cong A_5$ or $S_5$. Now it follows from  Lemmas
\ref{G>G/N} and  \ref{S5=A5}, that $S\leq Cyc(G)=1$. This
completes the proof.
\end{proof}
We remark here that there are solvable groups $G$ for which
$\omega(\mathcal{C}_G)=31$; for example, by Lemma \ref{ele}, we
may take $G$ to be either the elementary abelian $2$-group of
rank 5
or the elementary abelian $5$-group of rank 3. \\

We end the paper with the answer of   following question posed in
\cite{AM}.
\begin{qu}\label{qu}{\rm (Question 2.4 of \cite{AM})} Let $G$ be a torsion free group such that $Cyc(G)$ is
non-trivial. Is it true that G is locally cyclic?
\end{qu}
In \cite[Theorem 31.4]{O} Ol'shanskii has constructed   a
non-abelian torsion-free group $G$   all of whose proper
subgroups are cyclic and it is central extension of an infinite
cyclic group $Z$ by an infinite group of bounded exponent. Since
the group $G$ is 2-generated,  it is not  locally cyclic. Also
$Z\leq Cyc(G)$, for if $z\in Z$ and $a\in G$, then $\langle
z,a\rangle$ is abelian, and as $G$ is not abelian, $\langle
z,a\rangle\not=G$. Thus  $\langle z,a\rangle$ is cyclic. Hence
the answer of Question 2.4 of \cite{AM} is negative.\\

\noindent{\bf Acknowledgment.} The authors are indebted to the
referee for his/her careful reading and invaluable comments.


\begin{thebibliography}{99}
\bibitem{AAM} A. Abdollahi, S. Akbari and H. R. Maimani, {\sl Non-commuting graph of a
group}, J. Algebra {\bf 298} (2006) 468-492.
\bibitem{AM} A. Abdollahi and A. Mohammadi Hassanabadi, {\sl Non-cyclic graph of a
group}, Comm.  Algebra {\bf 35} (2007) 2057-2081.
\bibitem{AM2} A. Abdollahi and A. Mohammadi Hassanabadi, {\sl Finite
groups with a certain number of elements pairwise generating a
non-nilpotent subgroup}, Bull.  Iranian Math. Soc.  \textbf{30}
No. 2 (2004) 1-20.
\bibitem{AJM} Alireza Abdollahi, S. M. Jafarian Amiri and A. Mohammadi Hassanabadi,
{\sl Groups with specific number of centralizers},  Houston
Journal of Mathematics,  {\bf 33} (2007), no. 1,  43-57
\bibitem{bon} J. A. Bondy and J. S .R.  Murty, {\sl Graph theory with
applications}, American Elsevier Publishing Co., INC., 1977.
\bibitem{BR} R. D. Blyth and D. J. S. Robinson, {\sl
Insoluble groups with the rewriting property $P_8$}, J. Pure
Appl. Algebra {\bf 72} (1991), 251-263.
\bibitem{BS} S. M. Belcastro and G. J. Sherman, {\sl Counting centralizers in finite groups}, Math. Mag. {\bf
5} (1994), 111-114.
\bibitem{Hup} B. Huppert, {\sl Endliche Gruppen I}, Springer, Berlin, 1967.
\bibitem{moghadam} A. R. Moghaddamfar, W. J. Shi, W. Zhou and A. R. Zokayi, {\sl On the noncommuting graph
associated with a finite group}, Siberian Math. J.  {\bf 46}
(2005) no. 2, 325--332.
\bibitem{N} B. H. Neumann,  {\sl A problem of Paul Erd\"os on
groups,}  J. Austral. Math. Soc. Ser. A {\bf 21} (1976), 467-472.
\bibitem{Ob} V. N. Obraztsov, {\sl Simple torsion-free groups
in which the intersection of any two non-trivial subgroups is
non-trivial}, J. Algebra {\bf 199} No. 1, (1998) 337-343.
\bibitem{O} A. Yu. Ol'shanskii, Geometry of defining relations in groups, Kluwer Academic,
Amsterdam, 1991.
\bibitem{Rob} D. J.S. Robinson, A course in the theory of
groups, Springer, New York, 1982.
\bibitem{GAP} The GAP Group, GAP-Groups, Algorithms, and Programming,
Version 4.4; 2005, (http://www.gap-system.org).
\bibitem{Thomp} J. G. Thompson, {\sl Nonsolvable finite groups all
of whose local subgroups are solvable (Part I)}, Bull. Amer.
Math. Soc. (NS) {\bf 74} (1968) 383-437.
\end{thebibliography}
\end{document}